\documentclass[12pt]{amsart}
\usepackage{graphicx}
\usepackage{amsmath}
\usepackage{amssymb}
\usepackage{setspace}
\usepackage{amsthm}
\usepackage{color}

\usepackage[hide]{ed} 

\marginparwidth 1cm 
\oddsidemargin 0cm \evensidemargin 0cm
\topmargin 0pt \textheight 228mm \textwidth 168mm

\vfuzz2pt 
\hfuzz2pt 
\newtheorem{thm}{Theorem}[section]
\newtheorem{cor}[thm]{Corollary}

\newtheorem{lem}[thm]{Lemma}
\newtheorem{prop}[thm]{Proposition}
\newtheorem{claim}[thm]{Claim}
\theoremstyle{definition}

\theoremstyle{remark}
\newtheorem{rem}[thm]{Remark}

\numberwithin{equation}{section}

\newcommand{\Real}{\mathbb R}
\newcommand{\NN}{\mathbb N}
\newcommand{\ZZ}{\mathbb Z}
\newcommand{\NNN}{\mathcal N}

\newcommand{\RR}{\mathbb R}
\newcommand{\TTT}{\mathcal{T}}
\newcommand{\eps}{\varepsilon}

\newcommand{\tu}{\tilde{u}}

\newcommand{\ent}[1]{\left\lfloor #1 \right\rfloor} 

\DeclareMathOperator{\Image}{\mathcal{I}}

\begin{document}
\title[Self-similar solutions of supercritical wave equations]{Self-similar solutions of energy-supercritical focusing wave equations in all dimensions}

\author{Wei Dai, Thomas Duyckaerts}

\address{School of Mathematical Sciences, Beihang University (BUAA), Beijing 100083, P. R. China, and LAGA, UMR 7539, Institut Galil\'{e}e, Universit\'{e} Sorbonne Paris Cit\'e, 93430 - Villetaneuse, France}
\email{weidai@buaa.edu.cn}

\address{Institut Universitaire de France, and LAGA, UMR 7539, Institut Galil\'{e}e, Universit\'{e} Sorbonne Paris Nord, 93430 - Villetaneuse, France}

\thanks{W. Dai is supported by the NNSF of China (No. 11971049), the Fundamental Research Funds for the Central Universities and the State Scholarship Fund of China (No. 201806025011). T. Duyckaerts is supported by the Institut Universitaire de France and partially supported by the Labex MME-DII.}

\begin{abstract}
In this paper, we prove the existence of a countable family of regular spherically symmetric self-similar solutions to focusing energy super-critical semi-linear wave equations
\begin{equation*}
  \partial_{tt}u-\Delta u=|u|^{p-1}u \qquad \text{in} \,\, \mathbb{R}^{N},
\end{equation*}
where $N\geq 3$, $1+\frac{4}{N-2}<p$, and, if $N\geq 4$, $p \leq 1+\frac{4}{N-3}$. This was previously known only in the case $N=3$, for integer $p$ (see Bizo\'{n}, Maison and Wasserman \cite{BMW}). We also study the asymptotics of these solutions.
\end{abstract}
\maketitle {\small {\bf Keywords:} Semi-linear wave equations, Self-similar solutions, Focusing, Energy super-critical. \\
{\bf 2010 MSC} Primary: 35L70; Secondary: 34B15.}

\section{Introduction}
Consider the semi-linear wave equation on $\RR^N$:
\begin{equation}\label{0-0}
\partial_{tt}\Phi-\Delta\Phi-|\Phi|^{p-1}\Phi=0, \quad \Phi=\Phi(t, x), \quad x \in \mathbb{R}^{N},
\end{equation}
where $N\geq 1$ is an integer, and $p>1$.
The equation is invariant under the scaling
\begin{equation}\label{0-2}
\Phi(t, x) \mapsto \Phi_{\lambda}(t, x)=\frac{1}{\lambda^{\alpha}}\Phi\left(\frac{t}{\lambda},\frac{x}{\lambda}\right), \quad \alpha:=\frac{2}{p-1}.
\end{equation}
The scale invariant Sobolev space is $\dot{H}^{s_c}\times \dot{H}^{s_c-1}$, with $s_c=\frac{N}{2}-\alpha$. Our main results will concern the energy-supercritical case $s_c>1$, which corresponds to $N\geq 3$, $p>\frac{N+2}{N-2}$.

The nonlinear term has a focusing sign, which means that it tends to magnify the amplitude of the wave. If $\Phi$ is small, this term is negligible and the evolution is essentially linear, leading to dispersion. However, if $\Phi$ is large, the dispersive effect of the Laplacian may be overcome by the focusing effect of the nonlinearity and a singularity may form.  Such a phenomenon, usually referred to as blow-up, has been intensively studied since the pioneering works by Keller \cite{K}, John \cite{J} and Glassey \cite{G}.

Ignoring the Laplacian in \eqref{0-0} and solving the ordinary differential equation $\Phi_{tt}=|\Phi|^{p-1}\Phi$, one gets the exact, homogeneous in space, solution
\begin{equation}\label{0-1}
\Phi_T(t)=\frac{b_{0}}{(T-t)^{\alpha}}, \quad b_{0}:=\left[\frac{2(p+1)}{(p-1)^{2}}\right]^{\frac{1}{p-1}}, \quad T\in \RR,
\end{equation}
which blows up as $t\rightarrow T$.

This solution is stable 
  (see \cite{DonningerSchorkhuber16,DonningerSchorkhuber17,Donninger17}), and there are numerical evidences  that it also determines the leading order asymptotic of blow-up for generic blow-up solutions (see e.g. \cite{BCT}). This is indeed the case in dimension $N=1$, as proved by F.~Merle and H.~Zaag in a series of work (see \cite{MerleZaag08}, \cite{MerleZaag12} and references therein).

The solution $\Phi_T$ is an example of \emph{self-similar solution}, that is a solution of the equation which is invariant (up to a time translation) by the rescaling \eqref{0-2}. It is easy to see that a self-similar solution must be of the following form
\begin{equation}\label{0-3}
\Phi(t, x)=(T-t)^{-\alpha} u\left(\frac{x}{T-t}\right),
\end{equation}
for some function $u(y)$ and $T\in \RR$. Note that each self-similar solution with a regular profile $u$ provides an explicit example of regular initial data developing a singularity in finite time. Substituting the ansatz \eqref{0-3} into equation \eqref{0-0}, and assuming to simplify that the solution is radial, we obtain the following ordinary differential equation for the similarity profile $u(\rho)$:
\begin{equation}\label{PDE}
\left(1-\rho^{2}\right)u''+\left(\frac{N-1}{\rho}-2(\alpha+1)\rho\right)u'-\alpha(\alpha+1)u+|u|^{p-1}u=0.
\end{equation}
To our knowledge, the first theoretical work on the equation \eqref{PDE} is the article \cite{KW} by Kavian and Weissler, where a careful study of global solutions of \eqref{PDE} is carried out. It is proved in particular that \eqref{0-0} has no radial, finite energy, self-similar solution. However, this article does not contain any existence result.

Equation \eqref{PDE} has at least two explicit solutions: the regular, constant solution $b_0$ (corresponding to the blow-up solutions $\Phi_T$ of \eqref{0-0}), and the solution
\begin{equation}\label{singular}
u_{\infty}(\rho):=b_{\infty} \rho^{-\alpha}, \qquad b_{\infty}:=\left(\alpha(N-2-\alpha)\right)^{\frac{\alpha}{2}},
\end{equation}
singular at $\rho=0$,
which corresponds to the singular static solution $\Phi_{\infty}=b_{\infty}r^{-\alpha}$ of equation \eqref{0-0}.
In space dimension $N=3$, the existence of other regular solutions of \eqref{PDE} is proved in \cite{BBM} for $p=3$ and in \cite{BMW} in the energy supercritical case $p>5$ (for integer $p$). \ednote{{\color{red} Wei: In \cite{BMW}, the authors have only claimed their results for $N=3$ and $p\geq7$ odd integers. Should we mention that their results are valid for any integers $p>5$ or exactly as what the authors have claimed in \cite{BMW}? See also the abstract.}{\color{blue} Thomas: I have changed it to odd integer (also in the abstract). I think most of their proof works for any number $p>5$, and there is a remark somewhere in the article that the proof should work for any integer $p$. But they use in some places that $p\geq 7$ I think.}} In this last work, it is shown that for any integer $n$, there exists a regular solution of \eqref{PDE} that intersects $u_{\infty}$ exactly $n$ times on $(0,1)$. These solutions can be extended (except maybe for a finite number of them) to a regular solution of \eqref{PDE} on $(0,\infty)$ that goes to $0$ as $\rho\to\infty$. For the related issue of existence of self-similar solutions for energy-supercritical equivariant wave maps, see \cite{Shatah,BizonCMP}.\ednote{{\color{blue} Thomas: I added references to article proving the existence of wave maps.}}

In space dimension $N\geq 5$, an explicit solution of \eqref{PDE} was found by Glogi\'c and Sch\"orkhuber \cite{GS,GMS} in the cubic case $p=3$, which is also energy-supercritical. Namely:
\begin{equation}
 \label{SolGlSc}
 u(\rho)=\frac{2\sqrt{2(N-1)(N-4)}}{N-4+3\rho^2}.
\end{equation}
To our knowledge, and quite surprisingly, the works cited above are the only theoretical works on the existence of self-similar solutions for equation \eqref{0-0}. However heuristic arguments and numerical investigations (see \cite{R}) suggest the existence of a countable family of regular solutions of \eqref{PDE} for any supercritical nonlinearity $p>1+\frac{4}{N-2}$, below the Joseph-Lundgren exponent (defined below by \eqref{defpJL}) if $N\geq 11$.

In this work, we prove the existence of a countable family of regular solutions of \eqref{PDE} in higher dimension in the case $1+\frac{4}{N-2}< p\leq 1+\frac{4}{N-3}$. Our results are more precise when $p<1+\frac{4}{N-3}$:
\begin{thm}\label{Thm0}
Assume $N=3$, or $N\geq 4$ and $1+\frac{4}{N-2}<p<1+\frac{4}{N-3}$. For any non-negative integer $n\geq0$, there exists $\rho_n>1$ and an analytic positive function $u_n$ on $[0,\rho_n)$, which is solution of the equation \eqref{PDE} on $(0,\rho_n)$, and such that
$u_n-u_{\infty}$ has exactly $n+1$ zeros on $(0,1)$. Moreover,
$$ \lim_{n\to\infty} u_n(0)=+\infty.$$
\end{thm}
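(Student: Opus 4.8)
\emph{Proof strategy.} The plan is to construct the solutions by a shooting argument from the regular singular point $\rho=0$, using an Emden--Fowler change of variables to expose the oscillation of solutions around $u_{\infty}$. First I would set up the local theory at the origin: by a Frobenius/fixed-point argument on \eqref{PDE}, for each $b>0$ there is a unique solution $u_b$, analytic near $0$ and even in $\rho$, with $u_b(0)=b$ and $u_b'(0)=0$, depending analytically on $b$, which extends over the maximal subinterval of $(0,1)$ on which it stays bounded. Introducing $t=\ln\rho$ and $v(t)=\rho^{\alpha}u(\rho)$, equation \eqref{PDE} becomes
\[
(1-e^{2t})\ddot v+\bigl[(N-2-2\alpha)-e^{2t}\bigr]\dot v-\alpha(N-2-\alpha)v+|v|^{p-1}v=0,
\]
a non-autonomous perturbation, exponentially small as $t\to-\infty$, of the autonomous nonlinear oscillator obtained by dropping the $e^{2t}$ terms. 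In these variables $u_{\infty}$ is the constant equilibrium $v\equiv b_{\infty}$, while the regular solutions $u_b$ are precisely the orbits lying, as $t\to-\infty$, on the one-dimensional unstable manifold of the saddle $v=0$, with $v_b(t)\sim b\,e^{\alpha t}$.

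The heart of the matter is to count the intersections of $u_b$ with $u_{\infty}$, i.e.\ the zeros of $v_b-b_{\infty}$ on $t\in(-\infty,0)$. The hypothesis $1+\frac4{N-2}<p<1+\frac4{N-3}$ is equivalent to $0<N-2-2\alpha<1$; linearizing the autonomous oscillator at $v=b_{\infty}$ and using $\alpha(p-1)=2$ gives the characteristic equation $m^{2}+(N-2-2\alpha)m+2(N-2-\alpha)=0$, whose discriminant $(N-2-2\alpha)^{2}-8(N-2-\alpha)$ is negative and whose roots have real part $-(N-2-2\alpha)/2<0$. Thus $b_{\infty}$ is a \emph{stable spiral}, and an orbit falling into it winds around it infinitely often. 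I would then define $\mathcal{N}(b)$ to be the number of zeros of $u_b-u_{\infty}$ on $(0,1)$ and show, by a Sturm-type oscillation/rotation argument together with analytic dependence on $b$, that $\mathcal{N}(b)$ is finite, changes in a controlled way as $b$ varies, and satisfies $\mathcal{N}(b)\to\infty$ as $b\to\infty$. The mechanism is that the degeneracy at $t=0$ (the light cone $\rho=1$) breaks the time-translation invariance of the autonomous limit: for large $b$ the orbit enters the spiral region at a very negative time, so many half-turns around $b_{\infty}$ fit into the interval before $t=0$. The constant solution $b=b_{0}$, for which $b_{0}>b_{\infty}$ in this regime, gives the base case $\mathcal{N}(b_{0})=1$.

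Next I would analyze the light cone $\rho=1$, where \eqref{PDE} has a regular singular point. Writing $s=1-\rho$, the indicial exponents are $0$ and $r=s_c-\tfrac12\in(\tfrac12,1)$, so a solution issued from the origin has the form $u_b(\rho)=(\text{analytic in }s)+C(b)\,s^{r}\bigl(1+o(1)\bigr)$; since $r\notin\ZZ$, it extends analytically across $\rho=1$, hence to some $[0,\rho_n)$ with $\rho_n>1$, if and only if $C(b)=0$. The functional $b\mapsto C(b)$ is continuous where $u_b$ reaches $\rho=1$ bounded (which holds for the spiral-trapped orbits), and the regular solutions at $\rho=1$ form a one-parameter family $w_a$ with $w_a(1)=a$, among which $u_{\infty}=w_{b_{\infty}}$. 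The selection of $u_n$ then proceeds by continuity: on each $b$-range where $\mathcal{N}(b)=n+1$ I would locate $b_n$ with $C(b_n)=0$, using that the sign governing the approach to $\rho=1$ must change as $\mathcal{N}(b)$ increments, so that $C$ vanishes by the intermediate value theorem. Positivity of $u_n$ follows because the spiral keeps $v_b$ in a bounded neighbourhood of $b_{\infty}>0$, and the monotone dependence of the crossing number on $b$ gives $b_n=u_n(0)\to\infty$.

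The main obstacle is the rigorous crossing count combined with the selection at the cone. Two difficulties compound. First, the oscillator is only \emph{asymptotically} autonomous, so the clean phase-plane rotation argument must be carried out for the full non-autonomous equation and controlled uniformly up to the singular time $t=0$; establishing $\mathcal{N}(b)\to\infty$ — that arbitrarily large $b$ really produces unboundedly many rotations before the orbit can leave the spiral region or reach the cone — is the quantitative core. Second, $\rho=1$ is itself a singular point with a non-integer indicial exponent, so defining $C(b)$, proving its continuity in $b$, and showing it vanishes exactly within the prescribed crossing window requires a careful Frobenius analysis matched to the shooting data.
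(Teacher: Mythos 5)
Your skeleton --- shoot from the origin, count oscillations of $u_b$ about $u_{\infty}$, and select parameters at which the solution is regular at the cone --- is the same as the paper's, and your local computations check out: the indicial exponents at $\rho=1$ are indeed $0$ and $s_c-\frac{1}{2}\in(\frac{1}{2},1)$ (consistent with Proposition \ref{P:local1}, where the cone-regular solutions form the one-parameter family $U(\cdot,b)$ with $U'(1,b)$ determined by $b$), and your spiral discriminant $(N-2-2\alpha)^{2}-8(N-2-\alpha)<0$ is exactly the paper's condition \eqref{2-2-26}, since $4\alpha(N-2-\alpha)(p-1)=8(N-2-\alpha)$ by $\alpha(p-1)=2$. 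But there are two genuine gaps. The first is the one you yourself flag as ``the quantitative core'' and then leave unproved: that for large shooting parameter the orbit actually enters, and remains in, a small neighbourhood of the spiral point $v=b_{\infty}$ at time $\approx-\frac{p-1}{2}\log b$, so that $\sim\log b$ half-turns accumulate before $t=0$. The local linearization at $b_{\infty}$ cannot deliver this; it is a global statement about the inner profile. The paper proves it in Proposition \ref{P:Thomas} and Corollary \ref{Cor0} by rescaling $u(\rho,c)=c\,\tilde u\big(c^{\frac{p-1}{2}}\rho\big)$, comparing with the ground state $Q$ of $\Delta Q+Q^{p}=0$ via a Gr\"onwall argument, proving $r^{\alpha}Q(r)\to b_{\infty}$, and then transporting the resulting strictly negative level of the Lyapunov functional $H_v$ forward by its monotonicity \eqref{2-2-4}, which confines $v$ near $1$ on $[\rho_1(c),1]$ with $\rho_1(c)=Mc^{-\frac{p-1}{2}}$; the winding estimate then comes from the sign-definite quadratic form in \eqref{2-2-22}--\eqref{2-2-28}. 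Without some version of this ingredient, your claim $\mathcal{N}(b)\to\infty$ has no proof.

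The second gap is the selection step at the cone. Your intermediate-value argument on the Frobenius coefficient $C(b)$ rests on the assertion that ``the sign governing the approach to $\rho=1$ must change as $\mathcal{N}(b)$ increments,'' which is unjustified and, as stated, dubious: by simplicity of zeros, $\mathcal{N}$ jumps exactly at parameters where $u_b(1)=b_{\infty}$ (a zero crosses the cone), and at such parameters the solution is in general still singular ($C\neq 0$, with $w(1)=0$ and $w'\sim s^{r-1}$ in your notation $s=1-\rho$), so the jump set of $\mathcal{N}$ and the zero set of $C$ are a priori unrelated parameter sets; moreover you would need continuity of $b\mapsto C(b)$ uniformly up to the singular point, which you do not address. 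The paper circumvents $C$ entirely: it matches the origin family $u(\cdot,c)$ with the cone-regular family $U(\cdot,b)$ at an interior radius $\rho_0<1$ through a two-dimensional separation argument in the $(\Theta,R)$-plane (Propositions \ref{prop1} and \ref{prop2}), and it pins down the exact count $n+1$ and its parity using the bounds \eqref{tTheta1}--\eqref{tTheta2} on $\tilde{\Theta}(1,b)$ together with the a priori bound \eqref{2-2-29} that excludes spurious intersections of the two curves. A scalar IVT of the kind you propose does work, but only in the critical case $p=1+\frac{4}{N-3}$ (Theorem \ref{T:critical_case}), where cone-regularity reduces to the scalar condition $u(1,c)=b_0$. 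Two smaller points: positivity on $(0,1]$ holds for \emph{every} $c>0$ by the $H_v$-monotonicity (Step 4 of Proposition \ref{P:local0}), not only for spiral-trapped orbits; and monotonicity of the crossing number in $b$ is neither proved nor needed for $u_n(0)\to\infty$ --- continuity of $\Theta(\rho_0,\cdot)$ on compact parameter sets already forces the matched parameters to tend to infinity once the winding does.
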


The conditions $p<$, $=$, $>1+\frac{4}{N-3}$ are equivalent to $b_{\infty}<$, $=$, $>b_{0}$ respectively, where $b_{\infty}$ is given by \eqref{singular}.
If $p<1+\frac{4}{N-3}$, as in Theorem \ref{Thm0}, the solutions $u_{\infty}$ and $b_0$ of \eqref{PDE} intersects on $(0,1)$. In the case $p=1+\frac{4}{N-3}$, we have $b_0=b_{\infty}$ and these two solutions intersect exactly at $\rho=1$. It is easy to see that in this case, any solution of \eqref{PDE} that is regular at $\rho=1$ must satisfy $u(1)=\{-b_0,b_0,0\}$.
In this case, our existence result is as follows:
\begin{thm}
\label{T:critical_case}
Assume $p=1+\frac{4}{N-3}$. Let $n_0$ be an integer and $C_0>0$. Then there exists $\rho_0>1$ and an analytic positive function $u$ on $[0,\rho_0)$, solution of \eqref{PDE} on $(0,\rho_0)$ and  such that $u(0)>C_0$, $u(1)=b_0$ and $u-u_{\infty}$ has at least $n_0$ zeros on $(0,1)$.
\end{thm}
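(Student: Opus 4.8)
The plan is to produce the solution by shooting from the origin, the decisive feature being that the prescribed value $b_0$ is exactly the one that removes the singularity forced by the degeneracy of \eqref{PDE} at $\rho=1$ in the critical regime. For $a>0$ let $u_a$ be the unique local analytic solution of \eqref{PDE} with $u_a(0)=a$ and $u_a'(0)=0$, i.e.\ the regular branch at the regular singular point $\rho=0$ (whose indicial exponents are $0$ and $2-N$). I would first establish, by the same rescaling analysis that produces the oscillations in Theorem~\ref{Thm0} --- writing $u_a(\rho)\approx a\,U(a^{(p-1)/2}\rho)$, where $U$ solves the scale-invariant limiting equation $U''+\frac{N-1}{r}U'+|U|^{p-1}U=0$ with $U(0)=1$ --- that for $a$ large $u_a$ stays positive, remains in the oscillatory (focus) regime around $u_\infty$ up to $\rho=1$, and that the number of intersections of $u_a$ with $u_\infty$ on $(0,1)$ tends to $+\infty$ as $a\to+\infty$, with $u_a$ depending continuously on $a$.

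The critical regime is governed by the identity $2\alpha=N-3$, which makes the first-order coefficient $\frac{N-1}{\rho}-2(\alpha+1)\rho$ of \eqref{PDE} vanish at $\rho=1$; the two indicial exponents of \eqref{PDE} at the regular singular point $\rho=1$ are then $0$ and $1$. Writing $\sigma=1-\rho$ and seeking a solution with finite limit $u(1^-)=c$, a Frobenius expansion gives $u=c+c_1\sigma+c_2\,\sigma\log\sigma+\cdots$, and inserting this into \eqref{PDE} yields, in the limit $\rho\to1^-$, the relation $2c_2=\alpha(\alpha+1)c-|c|^{p-1}c$. Since $b_0^{p-1}=\alpha(\alpha+1)$, the logarithmic coefficient $c_2$ vanishes precisely when $c\in\{0,\pm b_0\}$. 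Consequently, any solution on $(0,1)$ whose limit at $\rho=1$ equals $b_0$ has no logarithmic term, is therefore analytic across $\rho=1$, and extends to an analytic solution on some $[0,\rho_0)$ with $\rho_0>1$, belonging to the one-parameter family of regular solutions at $\rho=1$ determined by $u(1)=b_0$ and the free value $u'(1)$ (all higher derivatives being then fixed by successive differentiation of \eqref{PDE}). This is exactly the structural reason why, in the critical case, $u(1)=b_0$ is admissible and forces analyticity for free.

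It remains to match the two. Because $u_\infty(1)=b_\infty=b_0$ in the critical case, requiring $u_a(1^-)=b_0$ is the same as requiring $\rho=1$ to be an intersection point of $u_a$ with $u_\infty$. Set $\ell(a):=\lim_{\rho\to1^-}u_a(\rho)$, which exists and is finite by the Frobenius analysis. As $u_a$ oscillates around $u_\infty$ near $\rho=1$ and $u_\infty\to b_0$ there, $\ell(a)-b_0$ changes sign each time an intersection of $u_a$ with $u_\infty$ migrates through $\rho=1$ as $a$ varies; combined with the continuity of $a\mapsto\ell(a)$ and the unbounded growth of the intersection count, the intermediate value theorem produces an increasing sequence $a_m\to+\infty$ with $\ell(a_m)=b_0$ and with at least $m$ intersections of $u_{a_m}$ with $u_\infty$ inside $(0,1)$. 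Taking $m\ge n_0$ and, enlarging $m$ if needed, $a_m>C_0$, and using the previous paragraph for analyticity at $\rho=1$ together with positivity up to some $\rho_0>1$, gives the solution asserted in Theorem~\ref{T:critical_case}.

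The main obstacle is the analysis at the degenerate endpoint $\rho=1$, where $u_\infty$ and $b_0$ coincide and the first-order coefficient degenerates simultaneously. I must show that $\ell(a)$ is well-defined, finite and \emph{continuous} in $a$ up to the singular point, and that $\ell(a)-b_0$ genuinely oscillates in sign for large $a$; this requires adapting the rotation (Pr\"ufer-type) count used for Theorem~\ref{Thm0} to an endpoint at which the usual non-degenerate matching condition collapses, and ruling out that $u_a$ leaves the oscillatory regime --- by blowing up, or by developing the logarithmic derivative singularity for a whole interval of large $a$ --- before reaching $\rho=1$. Controlling this passage to the limit at the coincidence point, rather than the oscillation count itself, is the crux.
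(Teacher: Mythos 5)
Your outline reproduces the paper's overall skeleton (shoot from the origin, rescale to the profile $Q$ of \eqref{T30} to get the intersection count with $u_{\infty}$ to blow up, note $u(1,c)\to b_{\infty}=b_0$, and characterize regularity at $\rho=1$ by the value $b_0$), but the decisive analytic step is asserted rather than proved. Your Frobenius computation correctly identifies the first logarithmic coefficient and the fact that it vanishes exactly when $c\in\{0,\pm b_0\}$, but it \emph{presupposes} that the solution admits an expansion $c+c_1\sigma+c_2\sigma\log\sigma+\cdots$; for a solution merely known to satisfy $\lim_{\rho\to 1^-}u(\rho)=b_0$, nothing a priori rules out a divergent derivative, since the general bound available is only $|u'(\rho,c)|\leq M'|\log(1-\rho)|$. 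The paper closes precisely this hole in Lemma \ref{L:C40}: the logarithmic bound on $u'$ yields $|u(\rho,c)-b_0|\leq C(1-\rho)|\log(1-\rho)|$, which fed back into the self-adjoint form \eqref{C11'} makes $\left(\rho^{N-1}u'\right)'$ integrable up to $\rho=1$, so that $u'(1^-)=a$ exists; the contraction-mapping well-posedness of Proposition \ref{P:local2} and its one-sided uniqueness (Lemma \ref{L:local_uniq}) then identify $u$ with the $C^2$ solution $U(\cdot,a)$, which extends past $\rho=1$. You explicitly concede in your last paragraph that this passage is ``the crux'' and leave it unresolved, so the proposal is incomplete at its central point.

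The selection mechanism also has a gap as stated: you invoke the intermediate value theorem on the premise that $\ell(a)-b_0$ ``changes sign each time an intersection migrates through $\rho=1$,'' but no argument is given that migrations are transversal or that sign changes occur at all — a priori $\ell(a)$ could approach $b_0$ one-sidedly while the count grows, or zeros could exit tangentially. The paper avoids all sign information: by Lemma \ref{L:N(c)}, the zero count $\NNN(c)$ is locally constant wherever $u(1,c)>0$ and $u(1,c)\neq b_0$; since $\NNN(c)\to\infty$ (Proposition \ref{prop0} combined with Lemma \ref{L:zeros}) while $u(1,c)\to b_{\infty}=b_0>0$ (Corollary \ref{Cor0}), assuming $u(1,c)\neq b_0$ on a ray $[c_1,\infty)$ would force $\NNN$ to be constant there, a contradiction. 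This needs only the continuity of $c\mapsto u(1,c)$ up to $\rho=1$, which Proposition \ref{P:local0} establishes in the critical case via the $|\log(1-\rho)|$ estimate \eqref{SAbound'}. Repairing your proposal therefore amounts to (i) proving the regularity lemma \ref{L:C40} (together with the fixed-point construction of $U(\cdot,a)$) instead of appealing to a formal expansion, and (ii) replacing the IVT/sign-change step by the local-constancy contradiction; with those two inputs your argument becomes the paper's proof.
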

The solutions constructed in Theorems \ref{Thm0} and \ref{T:critical_case} are defined in a neighborhood of $[0,1]$ in $[0,\infty)$, which corresponds to a solution $\Phi$ defined by \eqref{0-3} in a neighborhood of the wave cone $\{|x|<T-t\}$. Our next result is that the solutions of Theorems \ref{Thm0} and \ref{T:critical_case} are indeed global, except maybe for a finite number of them, and have a prescribed asymptotic behaviour as $\rho\to\infty$:
\begin{thm}\label{Thm1}
Assume $N\geq 3$ and $1+\frac{4}{N-2}<p\leq 1+\frac{4}{N-3}$.
There exists $C=C(N,p)>0$ with the following property. Let $u\in C^2([0,\sigma))$ ($\sigma>1$) be a solution of \eqref{PDE} on $(0,\sigma)$ and such that $u(0)\geq C$. Then $u$ is positive and can be extended to a positive analytic solution of \eqref{PDE} on $(0,+\infty)$.
Furthermore there exists $L>0$ such that
\begin{equation}
\label{asymptotics}
\lim_{\rho\to+\infty} \rho^{\alpha}u(\rho)=L\text{ and }
\lim_{\rho\to+\infty}\rho^{\alpha+1}u'(\rho)=-\alpha L.
\end{equation}
In particular, the solutions $u_n$ of Theorem \ref{Thm0} with $n$ large, and the solutions $u$ of Theorem \ref{T:critical_case} with $C_0$ large and $n_0\geq 1$ are global and satisfy \eqref{asymptotics} for some $L>0$.
\end{thm}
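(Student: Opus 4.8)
The plan is to organize everything around the explicit singular solution $u_{\infty}$ of \eqref{singular} and to split the analysis at the light cone $\rho=1$. For $\rho>1$ the coefficient $1-\rho^{2}$ of $u''$ in \eqref{PDE} no longer vanishes, so the equation is a \emph{regular} second order ODE with coefficients that are analytic in $\rho$ and (as long as $u>0$) in $u$; by the Cauchy–Kovalevskaya theory the solution is analytic on $(1,\infty)$ and continues as long as it stays positive and bounded. On $(0,1)$ the same holds, and at the regular singular points $\rho=0,1$ analyticity of the $C^{2}$ solution follows by Frobenius. Hence the whole problem reduces to a \emph{global a priori bound} on $(1,\infty)$, and positivity. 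The hypothesis $u(0)\ge C$ enters only through the behaviour at the cone: setting $u(\rho)=b\,w\!\left(b^{(p-1)/2}\rho\right)$ with $b=u(0)$ turns \eqref{PDE} near the origin, in the limit $b\to\infty$, into the Lane--Emden equation $w''+\tfrac{N-1}{s}w'+|w|^{p-1}w=0$, $w(0)=1$, $w'(0)=0$, whose positive solution stays positive and is asymptotic to $b_{\infty}s^{-\alpha}$. Since $\alpha(p-1)=2$, this forces $u(1)\to b_{\infty}=u_{\infty}(1)$ and $u'(1)\to-\alpha b_{\infty}=u_{\infty}'(1)$ as $b\to\infty$. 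Thus, for $C$ large, $u$ is positive on $[0,1]$, and by continuous dependence at $\rho=1$ (where the single free parameter is $u(1)$) it is as close as we wish to $u_{\infty}$ on any fixed interval $[1,R]$.

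The a priori bound is obtained from a Lyapunov functional adapted to the exterior. Writing \eqref{PDE} as $(\rho^{2}-1)u''+\bigl(2(\alpha+1)\rho-\tfrac{N-1}{\rho}\bigr)u'+\alpha(\alpha+1)u-|u|^{p-1}u=0$ and multiplying by $u'$, one finds that
\[
\mathcal{E}(\rho):=\tfrac12(\rho^{2}-1)(u')^{2}+\tfrac{\alpha(\alpha+1)}{2}u^{2}-\tfrac{|u|^{p+1}}{p+1}
\]
satisfies $\mathcal{E}'(\rho)=-\bigl((2\alpha+1)\rho-\tfrac{N-1}{\rho}\bigr)(u')^{2}\le0$ for $\rho\ge\rho_{*}:=\sqrt{(N-1)/(2\alpha+1)}$. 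The potential $f(u)=\tfrac{\alpha(\alpha+1)}{2}u^{2}-\tfrac{|u|^{p+1}}{p+1}$ has a hump: $\{f\le m\}=[-u_{-},u_{-}]\cup\{|u|\ge u_{+}\}$ whenever $m<\max f$. Since $u\le\mathcal{E}$ forces $f(u(\rho))\le\mathcal{E}(\rho)\le\mathcal{E}(R)$, if we launch the estimate at a large $\rho=R$ where $u$ is close to $u_{\infty}$ — so that $u(R)\approx b_{\infty}R^{-\alpha}$ is small and $\mathcal{E}(R)\approx\mathcal{E}_{u_{\infty}}(R)\to0$ — then $u(R)<u_{-}$ and, by continuity, $u$ cannot cross the forbidden band $(u_{-},u_{+})$. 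Hence $|u|\le u_{-}$ on $[R,\infty)$: the solution is global. Moreover $\mathcal{E}$ is then nonincreasing and bounded below, so $\int^{\infty}\rho(u')^{2}\,d\rho<\infty$, and the only admissible limit of $u$ compatible with $|u|\le u_{-}$ is $0$; thus $u\to0$. This trapping near $u_{\infty}$ is the heart of the matter, and the standing restriction $p\le1+\tfrac{4}{N-3}$ (equivalently $b_{\infty}\le b_{0}$) is what makes the launching data lie in the basin.

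For positivity and the precise rate, note that on $[0,R]$ positivity comes from the closeness to $u_{\infty}>0$, while on $[R,\infty)$ it will follow from $L>0$ below. For the asymptotics I substitute $u(\rho)=\rho^{-\alpha}\phi(\tau)$, $\tau=\ln\rho$, which transforms \eqref{PDE} into
\[
\ddot\phi+\dot\phi=e^{-2\tau}\bigl[\ddot\phi+(N-2-2\alpha)\dot\phi-b_{\infty}^{p-1}\phi+|\phi|^{p-1}\phi\bigr],
\]
an exponentially small perturbation of $\ddot\phi+\dot\phi=0$, whose modes are the constant and $e^{-\tau}$. Using $u\to0$, a bootstrap removes the only possibly resonant term $e^{-2\tau}\phi^{p}$ and shows that $\phi=\rho^{\alpha}u$ is bounded (a comparison with the explicit solutions $c\rho^{-\alpha}$, in particular $u_{\infty}$, gives the same upper bound). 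Once $\phi$ is bounded, integrating $\tfrac{d}{d\tau}(e^{\tau}\dot\phi)=e^{-\tau}[\,\cdots]$ shows that $e^{\tau}\dot\phi$ converges, so $\dot\phi=O(e^{-\tau})$ and $\phi\to L$ for some $L$ (this is an instance of Levinson's asymptotic integration theorem). Since $\rho^{\alpha}u=\phi$ and $\rho^{\alpha+1}u'=-\alpha\phi+\dot\phi$, this is exactly \eqref{asymptotics}.

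It remains to show $L>0$. Positivity gives $L\ge0$; if $L=0$ then $\phi\to0$, i.e. $u$ decays in the fast mode $\rho^{-\alpha-1}$. To exclude this I use that $L$ is a continuous functional of the launching data on the set of trapped, decaying solutions (this continuity is built into the asymptotic-integration representation), together with the fact that for $u(0)\ge C$ the data at the cone is arbitrarily close to that of $u_{\infty}$, for which $L=b_{\infty}>0$; hence $L$ stays near $b_{\infty}$ and in particular $L>0$, which also yields positivity on $[R,\infty)$ (in the decaying regime $\phi\to L>0$ keeps $\phi$ positive for large $\tau$). The main obstacle is the global bound of the second paragraph: the nonlinearity is focusing, so the term $-\tfrac{|u|^{p+1}}{p+1}$ has the unfavourable sign in $\mathcal{E}$ and $u$ is convex where it is large, which invites blow-up; the resolution is precisely that largeness of $u(0)$ places the solution, at the cone, in the trapping region around the stable profile $u_{\infty}$. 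The secondary delicate point is excluding the fast decay mode, handled by the continuity argument above.
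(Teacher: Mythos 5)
Your overall skeleton (reduce to the exterior problem $\rho>1$, use the interior analysis to show the data at the cone is close to that of $u_{\infty}$, trap the solution with a Lyapunov functional, then invoke asymptotic integration) matches the paper's, and your trapping argument via $\mathcal{E}=-H$ is a legitimate variant of the paper's global-existence step. But the decisive step of the theorem — excluding the fast decay mode, i.e. case \eqref{A52} of Kavian--Weissler (Theorem \ref{T:A50}), so that \eqref{asymptotics} holds with $L>0$ — is asserted, not proved. Your claim that ``$L$ is a continuous functional of the launching data\dots built into the asymptotic-integration representation'' would require uniform-in-data bounds on $\phi=\rho^{\alpha}u$ and a uniform rate in $\phi(\tau)\to L$ over a neighborhood of $u_{\infty}$ inside the trapped set; neither is established, and your boundedness of $\phi$ is itself only sketched (the ``comparison with $c\rho^{-\alpha}$'' has no maximum principle behind it for this non-monotone ODE — this boundedness is the content of the Kavian--Weissler analysis). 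The paper replaces this by a quantitative open condition: the Riccati bound $\rho u'(\rho)/u(\rho)>-\widetilde{B}(\rho)/2$ is propagated from a launch radius $\rho_0>\rho_{N,p}$ (Lemma \ref{L:A90}), and since $\widetilde{B}(\rho)/2\to\frac{p+3}{2(p-1)}<\alpha+1$ this bars the fast mode, whose logarithmic derivative tends to $-(\alpha+1)$; it simultaneously yields positivity of $u$ on $(\rho_0,\infty)$, which your trapping bound $|u|\le u_-$ does not give on the intermediate range (a zero of $u$ with $u'\neq0$ is perfectly compatible with $\mathcal{E}$ small, and ``$\phi\to L>0$'' only controls large $\rho$). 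Tellingly, the discriminant inequality $\TTT(1)\le 0$ in Lemma \ref{L:A90} is precisely where the hypothesis $p\le 1+\frac{4}{N-3}$ enters the exterior analysis; your proposal never uses that hypothesis beyond ``$b_{\infty}\le b_0$ puts the data in the basin,'' which is a sign that the missing step is substantive — indeed the Glogi\'c--Sch\"orkhuber solution \eqref{SolGlSc} shows fast-decaying solutions do exist in the family, so ruling them out near $u_{\infty}$ genuinely requires an argument.

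The second gap concerns the critical case $p=1+\frac{4}{N-3}$. There $u(1)=b_0$ is forced, and the sole free parameter at the cone is $a=u'(1)$ (Proposition \ref{P:local2}); your launch at a large radius $R$ therefore needs $u'(1)\to -\alpha b_{\infty}$ as $u(0)\to\infty$. You claim this follows from the Lane--Emden rescaling, but the rescaling only controls the region $\rho=O\bigl(u(0)^{-(p-1)/2}\bigr)$, and the monotonicity of $H_v$ (Corollary \ref{Cor0}) gives $C^0$ convergence of $v$ up to $\rho=1$ but convergence of $v'$ only on compact subsets of $(0,1)$ — see \eqref{5-5}. The convergence of the derivative \emph{at} $\rho=1$ is exactly Proposition \ref{P:limitU'}, which the paper proves by a separate two-step argument based on the monotonicity of $\rho^2v_n'$ and a quantitative Gr\"onwall-type estimate near $\rho=1$; it cannot be waved through. (In the subcritical case your reduction to the single parameter $u(1)$, together with continuity of the flow through the cone from Proposition \ref{P:local1}, is correct and coincides with the launching strategy of the paper's Lemma \ref{L:A150}.)
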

\begin{rem}
\label{R:N=3}
It is proved in \cite{BMW} that if $N=3$, \emph{all} solutions $u_n$ with $n$ odd satisfy the conclusion of Theorem \ref{Thm1} (see also Remark \ref{R:BMW} below).
\end{rem}
\begin{rem}
 Theorems \ref{Thm0}, \ref{T:critical_case} and \ref{Thm1} yield a countable family of global regular solutions of \eqref{PDE}, with the same asymptotics as $\rho\to\infty$  as the singular solution $u_{\infty}$ (up to a multiplicative constant). The case $p=3$, $N=5$ is covered by Theorem \ref{T:critical_case}.\ednote{{\color{red} Wei: The case $p=5$, $N=4$ is also covered by Theorem \ref{T:critical_case}. Should we mention it or not?}{\color{blue}Thomas: I mentionned the case $p=3$ her because we have the explicit solution of Glo-Scho, which gives a counter-example to the conclusion of Theorem 1.3 since the decay at infinity is different. Unfortunately $p=3$ never satisfies $1+\frac{4}{N-2}<p<1+\frac{4}{N-3}$}.} In this case, the solutions satisfying the assumptions of Theorem \ref{Thm1} are of order $1/r$ at infinity. The explicit solution \eqref{SolGlSc} of Glogi\'c and Sch\"orkhuber is of order $1/r^2$ at infinity. This proves that the assumption $u(0)\geq C$ in Theorem \ref{Thm1} cannot be removed in general.
 \end{rem}
 \begin{rem}
 \label{R:large_n}
 Assume $N=3$ or $N\geq 4$ and  $1+\frac{4}{N-2}<p<1+\frac{4}{N-3}$. As a consequence of the proof of Theorem \ref{Thm0}, the self-similar solutions $u_n$ constructed in these theorems get close for large $n$, on the interval $(0,1)$, to the singular self-similar solution $u_{\infty}$. More precisely,
  \begin{gather*}
  \forall 0<\rho_1<1 \quad
  \sup_{\rho_1<\rho\leq 1}|u_n(\rho)-u_{\infty}(\rho)|\underset{n\to\infty}{\longrightarrow}0,\\
  \forall 0<\rho_1<\rho_2<1,\quad \sup_{\rho_1<\rho<\rho_2}|u_n'(\rho)-u_{\infty}'(\rho)|+
   |u_n'(1)-u_{\infty}'(1)|\underset{n\to\infty}{\longrightarrow}0.
  \end{gather*}

 Similar statements hold in the case $N\geq 4$, $p=1+\frac{4}{N-3}$, letting $C_0\to\infty$ in Theorem \ref{T:critical_case}. See Corollary \ref{Cor0} and Proposition \ref{P:limitU'}.
 \end{rem}

\begin{rem}
When the profile $u$ is global, as in Theorem \ref{Thm1}, the corresponding solution $\Phi$ defined by \eqref{0-3} is also global in one time direction. Indeed, if $u$ satisfies the assumptions of Theorem \ref{Thm1}, then $\Phi$, defined by \eqref{0-3} yields a self-similar solution of \eqref{0-0} on $(-\infty,T)\times \RR^N$ and on $(T,\infty)\times \RR^N$, which is, at fixed $t$, of order $1/r^{\alpha}$ for $r$ large. Note that $(\Phi(t),\partial_t\Phi(t))$ belongs to all the homogeneous Sobolev spaces $\dot{H}^{s}\times \dot{H}^{s-1}$, $s>s_c$, but misses the critical Sobolev space $\dot{H}^{s_c}\times \dot{H}^{s_c-1}$ by a logarithm.
Let us mention that the fact that the initial data of exact radial self-similar solutions of \eqref{0-0} are not in $\dot{H}^{s_c}\times \dot{H}^{s_c-1}$ is general and is indeed a consequence of \cite[Theorem 3.1]{KW} (recalled below in Theorem \ref{T:A50}). 

Let us also mention the work \cite{KS}, where a global, nonscattering solution of \eqref{0-0} is constructed in the case $N=3$, $p=7$ for focusing \emph{and defocusing} nonlinearity, by regularization of a singular self-similar solution. The initial data of this solution has the same asymptotic behaviour in $1/r^{\alpha}$ for large $r$, and also misses the critical Sobolev space by a logarithmic factor. 
\end{rem}

\subsection*{Sketch of proof and outline of the article}
The proofs of Theorem \ref{Thm0} and \ref{T:critical_case} are based on a refinement of the classical shooting method, and use some of the ideas of the proof of the corresponding result in space dimension $3$ sketched in the appendix of \cite{BMW} (see also \cite{L}).\ednote{Thomas: I added a reference to the article of Lepin. Unfortunately I could not find the english version of this article online, only the russian one. Did you find it? {\color{red} Wei: Until now, I can only find and download the russian version. I will try again to find the English version. Perhaps Bizo\'{n} can read the russian version.}{\color{blue} Thomas: certainly he has grown up in Poland before the collapse of Soviet Union!}}


For any $c>0$, one can define a regular solution $u(\rho,c)$ of \eqref{PDE} such that $u(0,c)=c$, $u'(0,c)=0$. It is easy to check that this solution can be extended to $[0,1)$, and that $u(\rho,c)$ has a limit $u(1,c)$ as $\rho\to 1$. This is not sufficient however to extended $u(\cdot,c)$ to a regular solution of \eqref{PDE} in a neighborhood of $\rho=1$, as the derivative $u'(\rho,c)$ might diverge logarithmically as $\rho \to 1$.

Similarly, if $p<1+\frac{4}{N-3}$, for any $b>0$, one can define a regular solution $U(\rho,b)$ of \eqref{PDE} such that $U(1,b)=b$, and that might be extended to $(0,1]$, but not, in general to a regular solution at $\rho=0$.
To obtain the conclusion of Theorem \ref{Thm0}, we must prove that there exists $c_n$ such that $u(\cdot,c_n)-u_{\infty}$ has exactly $n+1$ zeros on $(0,1)$, and $u(\cdot,c_n)$ coincide with a solution $U(\cdot,b_n)$ for some $b_n$. The two main ingredients of this proof are the following two facts, proved in Subsections \ref{SS:comparison}  and \ref{SS:intersection} respectively:
\begin{gather}
 \label{I:binfty} \displaystyle \lim_{c\to\infty} u(1,c)=b_{\infty}\\
 \label{I:zeros} \text{as }c\to +\infty\text{ the number of zeros of  }u(\cdot,c)-u_{\infty}\text{ goes to infinity}.  \end{gather}
 (see Corollary \ref{Cor0} for \eqref{I:binfty} and Proposition \ref{prop0} for \eqref{I:zeros}).
The rigorous proof of \eqref{I:binfty} in the appendix of \cite{BMW} is specific to dimension $N=3$, and we give a different proof, close to the ``physicist'' proof sketched in Section 3 of \cite{BMW}.
The proof of \eqref{I:zeros} is a non-trivial adaptation of \cite[Lemma 2]{BMW} where it is proved in dimension $3$ for $N\geq 7$.
We  note that points \eqref{I:binfty} and \eqref{I:zeros} hold for any $p>1+\frac{4}{N-2}$, with the additional condition $p<p_{JL}$ if $N\geq 11$ for \eqref{I:zeros} ($p_{JL}$ is the Joseph-Lundgren exponent, see \cite{JL} and \eqref{defpJL} below). The assumption $p\leq 1+\frac{4}{N-3}$ is not needed here.

Once points  \eqref{I:binfty} and \eqref{I:zeros} are known, Theorem \ref{Thm0} can be proved by a geometrical argument, similar to the one of the proof of the particular case $N=3$ in \cite{BMW}. We give the details,  adopting a slightly different point of view than in \cite{BMW} and including the proof of the case where $n$ is even, which is omitted there.

The end of the proof of Theorem \ref{T:critical_case} in Section \ref{S:critical_case}, also relying on points \eqref{I:binfty} and \eqref{I:zeros}, is quite different. Indeed, the local well-posedness at $\rho=1$ for the case $\alpha=1+\frac{4}{N-3}$ takes a completely different form (see Proposition \ref{P:local2}). In this case, we prove that for any $a\in \RR$, there exists a unique solution $U(\cdot,a)$ of \eqref{PDE} in a neighborhood of $\rho=1$ such that $U(1,a)=b_0=b_{\infty}$ and $U'(1,a)=a$. As a consequence of the uniqueness in this well-posedness statement, we also obtain that a solution $u(\rho)$ of \eqref{PDE} defined on $(0,1)$ can be extended to a regular solution if and only if $\lim_{\rho \to 1} u(\rho)=b_{\infty}$ (see Lemma \ref{L:C40}). Once this is known, the conclusion of Theorem \ref{T:critical_case} follows quite easily from \eqref{I:binfty}, \eqref{I:zeros} and an argument related to the intermediate value theorem.

Section \ref{S:extension} is dedicated to the proof of Theorem \ref{Thm1}. In the case $N=3$, it is contained in \cite{BMW} for the global existence of the solution and \cite{KW} for its asymptotic behaviour. The proof relies crucially on a simple monotonicity formula, that seems to be specific to $N=3$.
The proof of Theorem \ref{Thm1} in higher dimension is more intricate, and is based on a refinement of the method of \cite{KW}: see Propositions \ref{P:A0} and \ref{P:A1} which complete some of the results of \cite{KW}. Let us mention that the proof is still more complicated in the case $p=1+\frac{4}{N-3}$, where we need the extension of \eqref{I:binfty} mentioned in Remark \ref{R:large_n}.
\subsection*{Acknowledgment}
The second author would like to thank Piotr Bizo\'n, Birgit Sch\"orkhuber and Fred Weissler for fruitful discussions.
\section{Properties of solutions that are regular at the origin}
\label{S:regular}
In this section, the first part of the proof of the Theorems \ref{Thm0} and \ref{T:critical_case}, we study solutions of \eqref{PDE} that are regular at $\rho=0$. Subsection \ref{SS:0} concerns the local existence. In Subsections \ref{SS:comparison} and \ref{SS:intersection} we compare the solutions $u$ of \eqref{PDE} such that $u(0)$ is large to the singular solution $u_{\infty}$, proving the two crucial properties \eqref{I:binfty} and \eqref{I:zeros} mentioned in the introduction.

Throughout this section, we assume $N\geq 3$, $1+\frac{4}{N-2}<p$.

\subsection{Existence of solutions on $[0,1)$}
\label{SS:0}
In the first step, we prove the local existence of a one-parameter family of regular solutions in a neighborhood of $\rho=0$ and prove that these solutions can be extended to $[0,1)$. As in all the article, we denote by $'$ the derivative with respect to the variable $\rho$.
\begin{prop}
 \label{P:local0}
 Assume $1+\frac{4}{N-2}<p$. Let $c\in \RR$. There exists a unique function $u(\cdot,c)\in C^2([0,1),\RR)$ which is solution of \eqref{PDE} on $(0,1)$, and such that $u(0,c)=c$, $u'(0,c)=0$. The functions $u$ and $u'$ are continuous on $[0,1)\times \RR$  and for all $c>0$, $\rho\in [0,1)$, $u(\rho,c)$ is positive. Furthermore, $u$ can be extended to a continuous function on $[0,1]\times \RR$.
\end{prop}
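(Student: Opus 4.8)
The plan is to prove Proposition \ref{P:local0} in three stages: (1) local existence and uniqueness of a regular solution near $\rho=0$ via a power-series / fixed-point argument adapted to the singular coefficient $\frac{N-1}{\rho}$; (2) continuation of this local solution up to $\rho=1$ using standard ODE theory, together with positivity when $c>0$; (3) control of the solution as $\rho\to 1$, where the coefficient $(1-\rho^2)$ of $u''$ degenerates, to obtain a continuous extension to $[0,1]$.

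For stage (1), rewrite \eqref{PDE} in the form
\begin{equation*}
u''+\Bigl(\frac{N-1}{\rho}-2(\alpha+1)\rho\Bigr)\frac{u'}{1-\rho^2}-\frac{\alpha(\alpha+1)u-|u|^{p-1}u}{1-\rho^2}=0,
\end{equation*}
which on $(0,1)$ has analytic (indeed real-analytic) coefficients, with the only singularity at $\rho=0$ coming from the term $\frac{N-1}{\rho}u'$. This is a regular singular point of Fuchs type with indicial exponents $0$ and $2-N$. The regularity requirement $u(0)=c$, $u'(0)=0$ selects the analytic branch corresponding to the exponent $0$. I would prove existence and uniqueness of such a solution by a contraction-mapping argument: convert the equation to the integral form
\begin{equation*}
u(\rho)=c+\int_0^\rho \rho^{1-N}\int_0^s t^{N-1}F\bigl(t,u(t),u'(t)\bigr)\,dt\,ds,
\end{equation*}
where $F$ collects the lower-order terms, and show this map is a contraction on a small ball in $C^1([0,\delta])$ for $\delta$ small; the weight $t^{N-1}$ exactly cancels the $\rho^{1-N}$ singularity, so the integrals converge and produce a $C^2$ (in fact analytic) solution with $u'(0)=0$. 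Joint continuity of $u$ and $u'$ in $(\rho,c)$ follows from the continuous (Lipschitz) dependence built into the contraction estimate.

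For stage (2), once a solution exists on $[0,\delta]$, the coefficients of \eqref{PDE} are smooth on the compact subintervals of $(0,1)$, so standard Cauchy–Lipschitz theory extends the solution; the only way extension could fail before $\rho=1$ is blow-up of $(u,u')$ in finite $\rho<1$, which I would rule out by an a priori bound. Positivity for $c>0$ should follow from a continuity/first-zero argument: if $u$ first vanished at some $\rho_0\in(0,1)$, examine the sign of the terms at $\rho_0$ to derive a contradiction, or more robustly exploit a monotonicity/energy-type quantity; since $1-\rho^2>0$ on $(0,1)$ this maximum-principle-style reasoning is available. The a priori bound preventing blow-up and the positivity are closely linked and I expect to handle them together.

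\textbf{The main obstacle} will be stage (3), the behavior as $\rho\to 1$, because the leading coefficient $(1-\rho^2)$ vanishes there, making $\rho=1$ a singular point of the ODE. The statement only claims a continuous extension of $u$ (not of $u'$), which matches the heuristic flagged in the introduction that $u'$ may diverge logarithmically as $\rho\to1$. The plan here is to establish a one-sided bound on $u$ near $\rho=1$ and then show $u(\rho)$ has a limit. Writing the equation near $\rho=1$ and setting $\eps=1-\rho$, the dominant balance is between $(1-\rho^2)u''\approx 2\eps\, u''$ and the first-order term, which forces $u'$ to grow at worst like $\log\frac{1}{\eps}$ and hence $u$ itself to remain bounded with a genuine limit $u(1,c)$. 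I would make this rigorous by deriving a differential inequality for a suitable combination of $u$ and $u'$ (or by integrating the equation against an integrating factor that regularizes the $\frac{u'}{1-\rho^2}$ term) to show that $\int^1 u'(\rho)\,d\rho$ converges even though $u'$ may be unbounded. Finally, continuity of the extended map $(\rho,c)\mapsto u(\rho,c)$ on $[0,1]\times\RR$ at points with $\rho=1$ follows by combining the uniform-in-$c$ version of this near-$\rho=1$ estimate with the continuity established on $[0,1)\times\RR$.
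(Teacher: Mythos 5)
Your stages (1) and (3) essentially match the paper's proof (a fixed-point argument at the regular singular point $\rho=0$ using the weight $\rho^{N-1}$, then integration of the self-adjoint form \eqref{2-2-5} to show $u'$ is integrable near $\rho=1$ uniformly in $c$ on compacts), but stage (2) contains a genuine gap: you have no working argument for positivity, and this is a substantive claim of the proposition. The first-zero sign argument you suggest fails outright: at a first zero $\rho_0\in(0,1)$ both zeroth-order terms $-\alpha(\alpha+1)u+|u|^{p-1}u$ vanish, so the equation at $\rho_0$ imposes no sign constraint, and a transversal crossing with $u'(\rho_0)<0$ (which is forced by ODE uniqueness, since $u(\rho_0)=u'(\rho_0)=0$ would give $u\equiv 0$) is perfectly consistent; second-order ODEs of this type do cross zero in general, so no maximum-principle reasoning is available. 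The paper's proof requires a genuinely non-obvious device: set $v=u/u_{\infty}$, where $u_{\infty}=b_\infty\rho^{-\alpha}$ is the explicit singular solution, and introduce the Lyapunov function $H_v$ of \eqref{2-2-3}, which satisfies $H_v'=(2\alpha-N+2)\rho(v')^2\leq 0$, $H_v(0^+,c)=0$, and is strictly negative for $\rho>0$ because $v'\neq 0$ near the origin when $c>0$; since $v(\rho_1)=0$ would force $H_v(\rho_1)\geq 0$, one gets $0<v<\left(\frac{p+1}{2}\right)^{\frac{1}{p-1}}$ on $(0,1]$. Relatedly, your a priori bound preventing blow-up on $[0,1)$ is only gestured at: the paper obtains it from the monotone energy $H$ of \eqref{2-3-0}, whose derivative $H'=\left(\frac{p+3}{p-1}\rho-\frac{N-1}{\rho}\right)(u')^2\leq 0$ holds precisely because $p>1+\frac{4}{N-2}$. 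Your sketch never invokes this hypothesis anywhere, which is a sign that both the boundedness and the positivity steps are missing their actual mechanism (both monotonicity formulas, $H'\leq 0$ and $H_v'\leq 0$, use exactly the supercriticality assumption).

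A secondary, smaller error: your claimed dominant balance near $\rho=1$, forcing $u'$ to grow ``at worst like $\log\frac{1}{\eps}$,'' is wrong in the main parameter regime. Balancing $2\eps u''$ against $(N-3-2\alpha)u'$ gives $u'\sim(1-\rho)^{\frac{N-3}{2}-\alpha}$, a power divergence whenever $p<1+\frac{4}{N-3}$ (i.e.\ $\alpha>\frac{N-3}{2}$), which is the range of Theorem \ref{Thm0}; the logarithm occurs only at the critical exponent $p=1+\frac{4}{N-3}$, and for $p>1+\frac{4}{N-3}$ the derivative stays bounded (this trichotomy is \eqref{SAbound}, \eqref{SAbound'}, \eqref{SAbound''} in the paper). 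Your conclusion that $\int^1 u'\,d\rho$ converges survives, since $\frac{N-3}{2}-\alpha>-\frac{1}{2}$ under the standing hypothesis — and indeed the cruder energy bound $\sqrt{1-\rho^2}\,|u'(\rho)|\leq c^{\frac{p+1}{2}}$ of \eqref{2-3-3} already yields integrability — but note that this energy bound is again a consequence of the monotonicity of $H$, so even the repair of stage (3) runs through the Lyapunov functional you omitted.
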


\begin{rem}
 It follows from the analyticity of $u\mapsto |u|^{p-1}u$ away from $u=0$ and from the positivity of $u$ that the solution $\rho\mapsto u(\rho,c)$ is indeed analytic on $[0,1)$.
\end{rem}

\begin{proof}[Proof of Proposition \ref{P:local0}]

$ $

\smallskip

\noindent\emph{Step 1. Local Cauchy theory at the origin.}
Writing the equation \eqref{PDE} in self-adjoint form:
\begin{equation}\label{2-2-5}
\left(\rho^{N-1}\left(1-\rho^{2}\right)^{\alpha-\frac{N-3}{2}}u'\right)'=\rho^{N-1}\left(1-\rho^{2}\right)^{\alpha-\frac{N-1}{2}}
u\left(b_{0}^{p-1}-|u|^{p-1}\right),
\end{equation}
the proof of the local existence with initial data at $\rho=0$ can be treated by solving by fixed point the equation
$$u(\rho)=c+\int_0^{\rho}\int_0^{\tau} \left( \frac{\sigma}{\tau} \right)^{N-1}\left( \frac{1-\sigma^2}{1-\tau^2} \right)^{\alpha-\frac{N-1}{2}} u(\sigma)\left( b_0^{p-1}-|u(\sigma)|^{p-1} \right)d\sigma (1-\tau^2)^{-1}d\tau.$$
One obtains that for all $c \in \RR$, there exists $\rho_c\in (0,1)$, and a unique function $u(\cdot,c)\in C^2([0,\rho_c),\RR)$, that satisfies equation \eqref{PDE} on $(0,\rho_c)$ and such that $u(0,c)=c$ and $u'(0,c)=0$. The function $(\rho,c)\mapsto u(\rho,c)$ is continuous in a neighborhood of $\{\rho=0\}$.

We omit the classical details and refer to the two first steps of the proof of Proposition \ref{P:local1} for the proof of a similar result (see also \cite[Section 2]{BMW}).

\medskip

\noindent\emph{Step 2. Extension of the solution.}
In this step, we prove that the solution $u(\cdot,c)$ can be extended to $[0,1)$. Noting that away from the boundary points $\rho=0$ and $\rho=1$, \eqref{PDE} is a regular ordinary differential equation, we see that it is sufficient to prove that $u$ remains bounded on the intersection of its maximal interval of existence with $[0,1)$.
For this we use the Lyapunov functional (see \cite{KW,H}) defined by
\begin{equation}\label{2-3-0}
H(\rho)=\left(1-\rho^{2}\right) \frac{(u^{\prime})^{2}}{2}+\frac{|u|^{p+1}}{p+1}-\frac{p+1}{(p-1)^{2}}u^{2},
\end{equation}
which satisfies, for $\rho\in (0,1)$ in the domain of existence of $u$,
\begin{equation}\label{2-3-1}
H(\rho)\geq-\frac{1}{p-1}\left(\frac{2(p+1)}{(p-1)^{2}}\right)^{\frac{2}{p-1}} \quad \text { and } \quad H^{\prime}(\rho)=\left(\frac{p+3}{p-1} \rho-\frac{N-1}{\rho}\right)(u^{\prime})^{2}\leq 0,
\end{equation}
where we have used, to obtain the second inequality, the assumption $p>1+\frac{4}{N-2}$.
Therefore if $u=u(\rho,c)$, we get
\begin{equation}\label{2-3-2}
H(\rho) \leq H(0)=\frac{c^{p+1}}{p+1}-\frac{(p+1) c^{2}}{(p-1)^{2}},
\end{equation}
which implies
\begin{equation}\label{2-3-3}
\sqrt{1-\rho^{2}}\left|u^{\prime}(\rho)\right| \leq c^{\frac{p+1}{2}}
\end{equation}
and $u(\rho)$ is bounded for $0<\rho<1$. Indeed, the function $F:[0,+\infty)\to \RR$ defined by
$$F(u)=\frac{u^{p+1}}{p+1}-\frac{p+1}{(p-1)^2}u^2,$$
which satisfies $F(0)=0$, is decreasing on $[0,b_0]$ and increasing on $[b_0,+\infty)$. We thus obtain that if $c \geq b_{0}$,
\begin{equation}\label{2-3-4}
|u(\rho)| \leq c \qquad \forall \rho\in (0,1),
\end{equation}
and if $0<c<b_0$, then
\begin{equation}\label{2-3-4bis}
|u(\rho)| \leq \overline{c} \qquad \forall \rho\in (0,1),
\end{equation}
where $\overline{c}$ is the unique number in $[b_0,+\infty)$ such that $F(c)=F(\overline{c})$.

\medskip

\emph{Step 3. Continuity.}
We next prove that $u(\rho,c)$ has a limit as $\rho\to 1$, and that $u$, extended to $\rho=1$, is continuous on $[0,1]\times \RR$.
By \eqref{2-3-4}, \eqref{2-3-4bis}, and the self-adjoint form \eqref{2-2-5} of equation \eqref{PDE}, we obtain that for all $M>0$, there exists a constant $K_M$ such that
\begin{equation}
\label{bound_deriv}
\forall \rho\in [0,1),\; \forall c\in [-M,M],\quad \left|\left(\rho^{N-1}\left(1-\rho^{2}\right)^{\alpha-\frac{N-3}{2}}u'\right)'\right|\leq K_M(1-\rho)^{\alpha-\frac{N-1}{2}}.
\end{equation}
In the case $p<1+\frac{4}{N-3}$, we have $\frac{N-3}{2}<\alpha<\frac{N-1}{2}$, and thus the right-hand side of \eqref{bound_deriv} is integrable. We deduce that for an absolute constant $C$,
\begin{equation}
\label{SAbound}
\forall \rho\in [1/2,1),\; \forall c\in [-M,M],\quad \left|u'(\rho,c)\right|\leq CK_M \left(1-\rho\right)^{\frac{N-3}{2}-\alpha},
\end{equation}
and the fact that $u(\rho,c)$ has a limit $u(1,c)$ as $\rho\to 1$ follows easily. In the case $p=1+\frac{4}{N-3}$, $\alpha=\frac{N-3}{2}$ and \eqref{SAbound} must be replaced by
\begin{equation}
\label{SAbound'}
\forall \rho\in [1/2,1),\; \forall c\in [-M,M],\quad \left|u'(\rho,c)\right|\leq CK_M |\log(1-\rho)|,
\end{equation}
which implies again that $u(\rho,c)$ has a limit as $\rho\to 1$. Finally, in the case $p>1+\frac{4}{N-3}$, we deduce from \eqref{bound_deriv}
$$\forall \rho\in [0,1),\; \rho^{N-1}(1-\rho)^{\alpha-\frac{N-3}{2}} |u'(\rho,c)|\leq K_M\int_0^{\rho} (1-\sigma)^{\alpha-\frac{N-1}{2}}d\sigma\leq C K_M(1-\rho)^{\alpha-\frac{N-3}{2}},$$
and we obtain
\begin{equation}
\label{SAbound''}
\forall \rho\in [1/2,1),\; \forall c\in [-M,M],\quad \left|u'(\rho,c)\right|\leq CK_M.
\end{equation}
Again, the fact that $u(\rho,c)$ has a limit as $\rho\to 1$ follows easily.

Since \eqref{PDE} is a regular ordinary differential equation on $(0,1)$ and that $u$ is continuous at $\rho=0$ by construction, we deduce by standard theory that $(\rho,c)\mapsto u(\rho,c)$ is continuous on $[0,1)\times \RR$.  We next prove the continuity of this function on $[0,1]\times \RR$. It is sufficient to prove the continuity at any point $(1,c)$, $c\in \RR$. Let $(\rho_k)_k\in [0,1]^{\NN}$, $(c_k)\in \RR^{\NN}$ such that
$$\lim_k \rho_k=1\quad \lim_k c_k=c.$$
Let $M=\sup_{k} |c_k|$. Let $\eps>0$. Fix $\rho_{\eps}\in (1/2,1)$ such that
\begin{equation}
\label{cases}
 \begin{cases}
  \int_{\rho_{\eps}}^1 \left(1-\rho\right)^{\frac{N-3}{2}-\alpha}d\rho\leq \frac{\eps}{CK_M} &\text{ if }p<1+\frac{4}{N-3}\\
  \int_{\rho_{\eps}}^1 \left|\log \left(1-\rho\right)\right| \leq \frac{\eps}{CK_M}&\text{ if }p=1+\frac{4}{N-3}\\
  1-\rho_{\eps}\leq \frac{\eps}{CK_M}&\text{ if }p>1+\frac{4}{N-4}.
 \end{cases}
\end{equation}
where $CK_M$ is as in \eqref{SAbound}, \eqref{SAbound'} or \eqref{SAbound''}. Then
$$|u(\rho_k,c_k)-u(1,c)|\leq |u(\rho_k,c_k)-u(\rho_{\eps},c_k)|+|u(\rho_\eps,c_k)-u(\rho_{\eps},c)|+|u(\rho_{\eps},c)-u(1,c)|.$$
By the continuity of $u$ on $[0,1)\times \RR$,
$$\lim_{k\to+\infty} |u(\rho_{\eps},c_k)-u(\rho_{\eps},c)|=0.$$
By \eqref{SAbound}, \eqref{SAbound'} or \eqref{SAbound''} and \eqref{cases}, we obtain that for large $k$,
$$|u(\rho_k,c_k)-u(\rho_{\eps},c_k)|+|u(\rho_{\eps},c)-u(1,c)|\leq 2\eps.$$
Thus
$$\limsup_{k\to\infty}\left|u(\rho_{k},c_k)-u(1,c)\right|\leq 2\eps,$$
and since $\eps$ is arbitrary,
$$\lim_{k\to\infty}u(\rho_k,c_k)=u(1,c),$$
concluding the proof of the continuity of $(\rho,c)\mapsto u(\rho,c)$.

\medskip \emph{Step 4. Positivity.}
To conclude the proof of Proposition \ref{prop1}, we must prove that $u(\cdot,c)$ is positive for $c>0$. For any $0<\rho\leq1$ and $c\geq0$, we define
\begin{equation}
\label{def}
v(\rho,c):=\frac{u(\rho,c)}{u_{\infty}(\rho)},
\end{equation}
and the
Lyapunov function
\begin{equation}\label{2-2-3}
H_{v}(\rho,c):=\frac{1}{2}\rho^{2}\left(1-\rho^{2}\right)(v')^{2}-\alpha(N-2-\alpha)\left(\frac{v^{2}}{2}-\frac{|v|^{p+1}}{p+1}\right).
\end{equation}
Since by our assumptions $\alpha:=\frac{2}{p-1}<\frac{N-2}{2}$, we obtain from equation \eqref{PDE} and the definition of $v(\rho,c)$,
\begin{equation}\label{2-2-4}
H_{v}'(\rho,c)=(2\alpha-N+2)\rho(v')^{2}\leq0,
\end{equation}
hence $H_v$ is decreasing with respect to $\rho\in(0,1]$. Furthermore, by the definition of $v$, $H_v(0,c)=0$ and $v'(\rho,c)\neq 0$ for $c>0$ and small $\rho>0$. As a consequence, $H_v(\rho,c)<0$ for all $\rho\in (0,1]$, which implies that $0<v(\rho,c)<\left(\frac{p+1}{2}\right)^{\frac{1}{p-1}}$ and thus
\begin{equation}\label{2-2-29}
  0<u(\rho,c)<\left(\frac{p+1}{2}\right)^{\frac{1}{p-1}}b_{\infty}\rho^{-\alpha}, \qquad \forall \,\, c>0, \,\, \,\, \forall \,\, \rho\in(0,1].
\end{equation}

\end{proof}

\subsection{Convergence to the singular solution}
\label{SS:comparison}


Recall from \eqref{def} and \eqref{2-2-3} in the proof of Proposition \ref{P:local0} the definition of $v$ and $H_v$.

In this section, we prove the following asymptotic upper bound for $H_{v}$.
\begin{prop}
 \label{P:Thomas}
 Assume $p>1+\frac{4}{N-2}$.
 For all $0<\eps<1$, there exists $M:=M(\eps)$ such that
 \begin{equation}
  \label{T00}
  \lim_{c\to+\infty} H_v\left( \frac{M}{c^{\frac{p-1}{2}}},c \right)\leq -\alpha(N-2-\alpha)\left( \frac{(1-\eps)^2}{2} -\frac{(1-\eps)^{p+1}}{p+1}\right).
 \end{equation}
\end{prop}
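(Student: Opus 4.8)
The plan is to study $u(\cdot,c)$ for large $c$ through the rescaling adapted to \eqref{PDE} near the origin. I would set (recall $(p-1)/2=1/\alpha$)
\[
s:=c^{1/\alpha}\rho,\qquad w_c(s):=\frac{1}{c}\,u\!\left(c^{-1/\alpha}s,c\right),
\]
so that $w_c(0)=1$, $\dot w_c(0)=0$ (with $\dot{}\,=d/ds$). Substituting $u=c\,w_c$ in \eqref{PDE} and using $\alpha(p-1)=2$, every term carrying a factor $\rho^2$ or $\rho$ acquires a coefficient $c^{-2/\alpha}\to0$, so that $w_c$ solves
\[
\bigl(1-c^{-2/\alpha}s^2\bigr)\ddot w_c+\frac{N-1}{s}\dot w_c-c^{-2/\alpha}\bigl(2(\alpha+1)s\,\dot w_c+\alpha(\alpha+1)w_c\bigr)+|w_c|^{p-1}w_c=0,
\]
a $c$-dependent perturbation of the limiting Lane--Emden equation
\begin{equation}\label{plan:lim}
\ddot w+\frac{N-1}{s}\dot w+|w|^{p-1}w=0,\qquad w(0)=1,\quad \dot w(0)=0.
\end{equation}
Recasting both problems as fixed-point equations at the origin (as in Step 1 of Proposition \ref{P:local0}) and controlling the $O(c^{-2/\alpha})$ terms, I would prove that $w_c\to w$ in $C^1$ uniformly on compact subsets of $(0,+\infty)$, where $w$ is the unique solution of \eqref{plan:lim}.

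Next I would rewrite $H_v$ in these variables. Since $v(\rho,c)=\rho^\alpha u(\rho,c)/b_\infty$, setting $V_c(s):=s^\alpha w_c(s)/b_\infty$ one has $v(\rho,c)=V_c(s)$, and a direct computation at $\rho=Mc^{-1/\alpha}$ (i.e.\ $s=M$) gives
\[
H_v\!\left(\frac{M}{c^{1/\alpha}},c\right)=\frac12 M^2\bigl(1-c^{-2/\alpha}M^2\bigr)\bigl(\dot V_c(M)\bigr)^2-\alpha(N-2-\alpha)\,g\bigl(V_c(M)\bigr),
\]
where $g(x):=\frac{x^2}{2}-\frac{|x|^{p+1}}{p+1}$, so that the right-hand side of \eqref{T00} equals $-\alpha(N-2-\alpha)g(1-\eps)$. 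Letting $c\to\infty$ and using $V_c\to V:=s^\alpha w/b_\infty$ in $C^1$ on $[0,M]$, the term $c^{-2/\alpha}M^2$ vanishes and I obtain
\[
\lim_{c\to\infty}H_v\!\left(\frac{M}{c^{1/\alpha}},c\right)=\frac12 M^2\bigl(\dot V(M)\bigr)^2-\alpha(N-2-\alpha)\,g\bigl(V(M)\bigr)=:\widetilde H(M).
\]
The proposition thus reduces to producing $M=M(\eps)$ with $\widetilde H(M)\le-\alpha(N-2-\alpha)g(1-\eps)$.

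The decisive step is the analysis of $V$ as $s\to\infty$, for which I would pass to the autonomous Emden--Fowler form. With $t=\log s$ and $W(t):=s^\alpha w(s)=b_\infty V(s)$, equation \eqref{plan:lim} becomes
\[
\ddot W+(N-2-2\alpha)\dot W-\alpha(N-2-\alpha)W+|W|^{p-1}W=0,
\]
a damped oscillator, since $N-2-2\alpha>0$ is exactly the assumption $p>1+\frac{4}{N-2}$. Its energy $E=\frac12\dot W^2+\Phi(W)$, with $\Phi(W)=-\frac{\alpha(N-2-\alpha)}{2}W^2+\frac{|W|^{p+1}}{p+1}$, satisfies $\dot E=-(N-2-2\alpha)\dot W^2\le0$, and a direct check using $\alpha(N-2-\alpha)=b_\infty^{p-1}$ shows $\widetilde H(M)=E(\log M)/b_\infty^2$. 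The potential $\Phi$ is coercive, with a local maximum at $W=0$ ($\Phi(0)=0$) and wells at $\pm b_\infty$ with $\Phi(\pm b_\infty)=-b_\infty^{p+1}\frac{p-1}{2(p+1)}<0$. Since $W\to0$ and $\dot W\to0$ as $t\to-\infty$, one has $E(t)<\lim_{t\to-\infty}E=\Phi(0)=0$ for all $t$, so $E$ is bounded (whence $w$ is global and $W$ bounded) and decreases to some $E_\infty<0$; by LaSalle's invariance principle the trajectory converges to an equilibrium, and $E_\infty<0=\Phi(0)$ forces $W\to b_\infty$ (using $W>0$, inherited from the positivity of $u$). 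Therefore
\[
\lim_{M\to\infty}\widetilde H(M)=\frac{\Phi(b_\infty)}{b_\infty^2}=-\alpha(N-2-\alpha)\Bigl(\tfrac12-\tfrac{1}{p+1}\Bigr)=-\alpha(N-2-\alpha)\,g(1).
\]
Because $g$ is strictly increasing on $[0,1]$, $g(1)>g(1-\eps)$, so this limit is strictly below $-\alpha(N-2-\alpha)g(1-\eps)$; choosing $M=M(\eps)$ large enough then yields $\widetilde H(M)\le-\alpha(N-2-\alpha)g(1-\eps)$, which is \eqref{T00}.

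I expect the main obstacle to be the rigorous, uniform-in-$c$ convergence $w_c\to w$ together with its first derivative on $[0,M]$, because of the singular coefficient $\frac{N-1}{s}$ at $s=0$; this forces one to work with the integral (fixed-point) formulation and to track the perturbation terms $O(c^{-2/\alpha})$ uniformly, rather than invoking classical continuous dependence directly. By contrast, the asymptotics of $V$ are handled cleanly by the energy/LaSalle argument above, which has the advantage of selecting the correct equilibrium $W=b_\infty$ using only the sign of $E$, thereby avoiding any case distinction between the oscillatory and monotone regimes (below or above the Joseph--Lundgren exponent).
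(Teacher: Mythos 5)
Your proposal is correct, and its first half coincides with the paper's own proof: the paper performs exactly your rescaling ($r=c^{\frac{p-1}{2}}\rho$, $u=c\tilde u$), obtains the perturbed Lane--Emden equation \eqref{T10}, proves $C^1$ convergence on compact sets to the solution $Q$ of \eqref{T30} via the integral formulation and Gr\"onwall --- note that this step requires the uniform-in-$c$ a priori bounds \eqref{2-3-3}--\eqref{2-3-4} coming from the Lyapunov functional $H$ of Proposition \ref{P:local0}, which your sketch should cite explicitly before running the fixed-point comparison --- and then rewrites $H_v$ in the rescaled variables exactly as in \eqref{T35}, reducing everything to \eqref{T01} and the limit \eqref{T02}. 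Where you genuinely diverge is the proof of \eqref{T02}, i.e.\ $V\to 1$. The paper argues by hand: it deduces $\int_0^\infty r\left(V'\right)^2dr<\infty$ from the decay of $E$, extracts a limit $V_\infty$ along dyadic intervals by Cauchy--Schwarz, identifies $E_\infty=\alpha(N-2-\alpha)\left(\frac{V_\infty^{p+1}}{p+1}-\frac{V_\infty^2}{2}\right)$ by averaging the energy over $[r_n,2r_n]$, and rules out $V_\infty\neq 1$ by integrating \eqref{T45} and contradicting the integrability of $r(V')^2$. You instead pass to the autonomous Emden--Fowler variables $t=\log s$, $W=s^\alpha w$, exhibit the damped-oscillator structure with friction coefficient $N-2-2\alpha>0$, and invoke LaSalle's invariance principle on the bounded trajectory: $E_\infty<0$ excludes the equilibrium $0$, positivity excludes $-b_\infty$, and connectedness of the $\omega$-limit set forces $W\to b_\infty$. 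Your route is shorter and makes the selection of the correct equilibrium automatic, at the cost of invoking LaSalle; the paper's argument is more elementary and entirely self-contained (it deliberately reproves this ``well-known'' fact, citing \cite{JL}). Two small points to tighten in your write-up: the strict positivity of $W$ is more cleanly obtained from $E(t)<0$ directly (a zero of $W$ would give $E\geq 0$ there), as the paper does for $V$, rather than ``inherited from the positivity of $u$'' --- the locally uniform limit of the positive $w_c$ is a priori only nonnegative; and the strict inequality $E(t)<0$ for all $t$ needs the observation that $E\equiv 0$ on an initial interval would force $\dot W\equiv 0$ and hence $W\equiv 0$ there, contradicting $w(0)=1$ --- this is precisely the paper's argument that $E_\infty\neq 0$. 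Neither point is a gap; both are one-line fixes.
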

As a consequence, we obtain:
\begin{cor}\label{Cor0}
Let $\sigma_0,\sigma_1$ such that $0<\sigma_0<\sigma_1<1$. Then
\begin{gather}\label{5-4'}
  \lim_{c\rightarrow+\infty} \sup_{\sigma_0\leq \rho\leq 1}|v(\rho,c)-1|=0\\
  \label{5-5}
  \lim_{c\rightarrow+\infty} \sup_{\sigma_0<\rho\leq \sigma_1}|v'(\rho,c)|=0.
\end{gather}
In particular:
\begin{equation}
 \label{5-4}
 \lim_{c\rightarrow +\infty} u(1,c)=b_{\infty}.
\end{equation}
\end{cor}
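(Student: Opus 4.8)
The plan is to combine the monotonicity of the Lyapunov function $H_v$ with the asymptotic upper bound of Proposition~\ref{P:Thomas}. Write $\kappa:=\alpha(N-2-\alpha)>0$ and, for $v>0$, set $P(v):=\frac{v^2}{2}-\frac{v^{p+1}}{p+1}$, so that by \eqref{2-2-3} and the positivity \eqref{2-2-29} of $v=v(\cdot,c)$,
\begin{equation*}
 H_v(\rho,c)=\tfrac12\rho^2(1-\rho^2)(v')^2-\kappa\,P(v(\rho,c)),\qquad 0<\rho\leq 1.
\end{equation*}
The key elementary fact is that $P$, restricted to $[0,B]$ with $B:=\left(\frac{p+1}{2}\right)^{1/(p-1)}>1$ (an interval containing $v(\rho,c)$ by \eqref{2-2-29}), is strictly increasing on $[0,1]$ and strictly decreasing on $[1,B]$, so it attains its maximum over $[0,B]$ only at $v=1$, with value $P(1)=\frac{p-1}{2(p+1)}$. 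By a standard compactness argument, for every $\eta>0$ there is $\delta>0$ such that $v\in[0,B]$ and $P(v)\geq P(1)-\delta$ force $|v-1|<\eta$.

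First I would prove \eqref{5-4'}. Set $\rho_c:=M/c^{(p-1)/2}$, which tends to $0$ since $p>1$; in particular $\rho_c<\sigma_0$ for $c$ large. As $H_v(\cdot,c)$ is non-increasing on $(0,1]$ by \eqref{2-2-4}, we get $H_v(\rho,c)\leq H_v(\rho_c,c)$ for all $\rho\in[\sigma_0,1]$ once $c$ is large, whence, dropping the non-negative kinetic term,
\begin{equation*}
 \kappa\,P(v(\rho,c))=\tfrac12\rho^2(1-\rho^2)(v')^2-H_v(\rho,c)\geq -H_v(\rho_c,c),\qquad \rho\in[\sigma_0,1],
\end{equation*}
the bound extending to $\rho=1$ by the continuity of $v$ coming from Proposition~\ref{P:local0}. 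Given $\eta>0$, fix the corresponding $\delta$, then choose $\eps\in(0,1)$ so small that $P(1)-P(1-\eps)<\delta/2$; with this $\eps$, \eqref{T00} gives $-H_v(\rho_c,c)\geq \kappa P(1-\eps)-\kappa\delta/2\geq \kappa(P(1)-\delta)$ for $c$ large. Thus $P(v(\rho,c))\geq P(1)-\delta$ uniformly on $[\sigma_0,1]$, and the compactness implication yields $\sup_{\sigma_0\leq\rho\leq1}|v(\rho,c)-1|<\eta$. Since $\eta$ is arbitrary this is \eqref{5-4'}; evaluating at $\rho=1$, where $v(1,c)=u(1,c)/b_\infty$, gives \eqref{5-4}.

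For the derivative bound \eqref{5-5} I would rewrite the defining identity as
\begin{equation*}
 \tfrac12\rho^2(1-\rho^2)(v'(\rho,c))^2=H_v(\rho,c)+\kappa\,P(v(\rho,c))\leq H_v(\rho_c,c)+\kappa\,P(v(\rho,c)).
\end{equation*}
On $[\sigma_0,\sigma_1]$ we have $\rho^2(1-\rho^2)\geq\sigma_0^2(1-\sigma_1^2)>0$, while \eqref{5-4'} and the continuity of $P$ give $\kappa P(v(\rho,c))\to\kappa P(1)$ uniformly, and $\limsup_{c\to\infty}H_v(\rho_c,c)\leq-\kappa P(1-\eps)$ by \eqref{T00}. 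Hence
\begin{equation*}
 \limsup_{c\to\infty}\ \sup_{\sigma_0<\rho\leq\sigma_1}(v'(\rho,c))^2\leq \frac{2\kappa\,(P(1)-P(1-\eps))}{\sigma_0^2(1-\sigma_1^2)}.
\end{equation*}
The left-hand side is independent of $\eps$ and the right-hand side tends to $0$ as $\eps\to0$, so \eqref{5-5} follows.

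The main obstacle, and the only genuinely non-routine point, is the passage from the energy-type information ``$P(v(\rho,c))\to P(1)$ uniformly'' to the pointwise-uniform conclusion ``$v(\rho,c)\to1$''. This is precisely where the a priori bound \eqref{2-2-29}, which confines $v$ to the compact interval $[0,B]$, together with the fact that $P$ has a strict interior maximum attained only at $v=1$, are essential: without the uniform confinement one could not exclude $v$ drifting toward a second level set of $P$. Everything else reduces to bookkeeping with the monotonicity \eqref{2-2-4} and the limit \eqref{T00}.
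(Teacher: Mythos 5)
Your proof is correct and follows essentially the same route as the paper: monotonicity of $H_v$ propagates the bound of Proposition~\ref{P:Thomas} to $[\sigma_0,1]$, dropping the kinetic term traps $v$ near the unique maximum of $P(v)=\frac{v^2}{2}-\frac{v^{p+1}}{p+1}$, and the full identity then controls $v'$ on $[\sigma_0,\sigma_1]$ where $\rho^2(1-\rho^2)$ is bounded below. The only cosmetic difference is that the paper makes your compactness step explicit via the level-set trap $1-2\eps\leq v\leq 1+r_\eps$ with $G(1+r_\eps)=G(1-2\eps)$, whereas you invoke the a priori confinement \eqref{2-2-29} and a soft extraction argument; both are equivalent.
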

Let us mention that only \eqref{5-4} is needed for the proof of Theorems \ref{Thm0} and \ref{T:critical_case}. The properties \eqref{5-4'} and \eqref{5-5} will be used  for the proof of Proposition \ref{P:limitU'}, which is crucial to obtain Theorem \ref{Thm1} in the case $p=1+\frac{4}{N-3}$.
\begin{proof}[Proof of Proposition \ref{P:Thomas}]
We use the self-adjoint form \eqref{2-2-5} of the equation \eqref{PDE}.
Let $Q$ be the regular solution of
\begin{equation}\label{T30}
\left\{
\begin{aligned}
 \left( \frac{d^2}{dr^2}+\frac{N-1}{r}\frac{d}{dr} \right)Q+|Q|^{p-1}Q&=0,\\
 Q(0)=1,\quad Q'(0)&=0.
\end{aligned}
\right.
\end{equation}
The existence of $Q$ is classical (see e.g. Proposition 1 of \cite{BFM} or \cite{JL}). Let $V(r):=\frac{r^{\alpha}}{b_{\infty}}Q$ and
\begin{equation}\label{T31}
  E(r)=\frac{1}{2}r^2(V')^{2}-\alpha(N-2-\alpha)\left( \frac{V^2}{2}-\frac{|V|^{p+1}}{p+1} \right),
\end{equation}
which is a decreasing function of $r$ (see \eqref{E'} below).
We claim
\begin{gather}
 \label{T01}
 \lim_{c\to+\infty} H_v\left( \frac{r}{c^{\frac{p-1}{2}}},c\right)=E(r),\quad \forall \, r>0;\\
 \label{T02}
 \lim_{r\to+\infty} E(r)=-\alpha(N-2-\alpha)\left( \frac{1}{2}-\frac{1}{p+1} \right).
\end{gather}
Noting that $V\mapsto -\alpha(N-2-\alpha)\left( \frac{V^2}{2}-\frac{|V|^{p+1}}{p+1} \right)$ is a decreasing function of $V\in [0,1]$, we see that \eqref{T01} and \eqref{T02} readily imply the conclusion \eqref{T00} of the proposition.

\medskip

\noindent\emph{Proof of \eqref{T01}}.

We start by a change of variable. Let $r=c^{\frac{p-1}{2}}\rho$. Define $\tilde{u}$ by $u(\rho)=c \tilde{u} \left( c^{\frac{p-1}{2}} \rho\right)$. Then $\tilde{u}$ satisfies the following equation
\begin{equation}\label{T10}
 \left( 1-\frac{r^2}{c^{p-1}} \right)\tu''+\left( \frac{N-1}{r}-2(\alpha+1)\frac{r}{c^{p-1}} \right)\tilde{u}'-\frac{\alpha(\alpha+1)}{c^{p-1}}\tilde{u}+|\tilde{u}|^{p-1}\tilde{u}=0.
\end{equation}
We fix $M>0$ arbitrarily. By the bounds
\eqref{2-3-3} and \eqref{2-3-4}, we have, for large $c$,
\begin{equation}\label{T11}
 |\tilde{u}(r)|\leq 1,\quad |\tilde{u}'(r)|\leq \frac{3}{2},\quad \forall \, r\in [0,M].
\end{equation}
Combining with the equation \eqref{T10}, we deduce that, for large $c$,
\begin{equation}\label{T12}
 \left|\tu''(r)\right|\leq \frac{2(N-1)}{r}+2,\quad \forall \, r\in [0,M].
\end{equation}
Denoting by $K$ a large constant, independent of $M$ and that may change from line to line, we obtain that, for large $c$,
\begin{eqnarray}\label{T13}
&& \left| \frac{d}{dr}\left( r^{N-1}\frac{d}{dr}\left( \tu-Q \right)\right) +r^{N-1}\left( |\tu|^{p-1}\tu-|Q|^{p-1}Q \right)\right| \\
\nonumber &\leq& \frac{K}{c^{p-1}}r^{N-1}(1+M^2),\qquad \forall \, r\in [0,M].
\end{eqnarray}
Let $D(r)=\max_{0\leq s\leq r} \left|\tu(s)-Q(s)\right|$. Integrating \eqref{T13} and using the bound \eqref{T11}, we obtain
\begin{equation}\label{T14}
 \left|r^{N-1} \frac{d}{dr} (\tu-Q)\right|\leq K\int_0^{r} \theta^{N-1}D(\theta)\,d\theta+\frac{K}{c^{p-1}}r^N(1+M^2),\quad \forall \, r\in [0,M].
\end{equation}
Hence, for large $c$,
\begin{align}\label{T32}
 \left|\tu(r)-Q(r)\right|&\leq K\int_0^{r} \int_0^{s} \frac{\theta^{N-1}}{s^{N-1}}D(\theta)\,d\theta\,ds+\frac{K}{c^{p-1}}r^{2}(1+M^2)\\
 \nonumber &\leq K\int_0^r sD(s)\,ds +\frac{Kr^{2}}{c^{p-1}}(1+M^2), \quad \forall \, r\in [0,M],
\end{align}
where at the last line we have used that $\frac{\theta}{s}\leq 1$ and $D(\theta)\leq D(s)$ when $0\leq \theta\leq s$. The preceding bound yields:
\begin{equation}\label{T33}
  D(r)\leq K\int_0^{r} sD(s)\,ds+\frac{Kr^{2}}{c^{p-1}}(1+M^2),\quad \forall \, r\in [0,M].
\end{equation}
By Gr\"onwall's Lemma, we get, for large $c$,
\begin{equation}\label{T34}
  D(r)\leq \frac{K M^{2}(1+M^{2})}{c^{p-1}} e^{K M^2},\quad \forall \, r\in [0,M].
\end{equation}
In view of \eqref{T14} and \eqref{T34}, we deduce
\begin{equation}\label{T20}
 \lim_{c\to\infty}
 \sup_{0\leq r\leq M}\left(|\tilde{u}(r)-Q(r)|+|\tilde{u}'(r)-Q'(r)|\right)=0.
\end{equation}
Let $\tilde{v}=\frac{r^{\alpha}}{b_{\infty}} \tu=v\left( c^{-\frac{p-1}{2}}r \right)$. Then
\begin{equation}\label{T35}
  H_v\left(\frac{M}{c^{\frac{p-1}{2}}},c\right)=\frac{M^2}{2}\left( 1-\frac{M^2}{c^{p-1}} \right)(\tilde{v}')^2\left( M  \right)-\alpha(N-2-\alpha)\left( \frac{\tilde{v}^2\left( M\right)}{2}-\frac{|\tilde{v}|^{p+1}\left( M\right)}{p+1} \right).
\end{equation}
Since by \eqref{T20},
\begin{equation}\label{T36}
  \lim_{c\to\infty} \tilde{v}(M)=V(M)>0,\qquad \lim_{c\to\infty}\tilde{v}'(M)=V'(M),
\end{equation}
we deduce
\begin{equation}\label{T37}
  \lim_{c\to\infty} H_v\left(\frac{M}{c^{\frac{p-1}{2}}},c\right)=\frac{M^2}{2}(V')^2( M)-\alpha(N-2-\alpha)\left( \frac{V^2( M)}{2}-\frac{\left|V( M)\right|^{p+1}}{p+1} \right),
\end{equation}
which is exactly \eqref{T01} since $M>0$ is arbitrary.

\medskip

\noindent\emph{Proof of \eqref{T02}}.

We will prove
\begin{equation}\label{limVV'}
\lim_{r\to\infty}rV'(r)=0,\qquad \lim_{r\to\infty}V(r)=1.
\end{equation}
This is well-known (see e.g. \cite{JL}). We give a proof for the sake of completeness. We have
\begin{equation}\label{E'}
E'(r)=-r(N-2-2\alpha)\left(V'(r)\right)^2\leq0, \qquad E(0)=0.
\end{equation}
Since $E$ is bounded from below, it has a limit $E_{\infty}\leq 0$ at infinity. We note that $E_{\infty}$ cannot be $0$, or else $V'(r)$ would be $0$ for all $r>0$, and thus $V$ (and $Q$) would be $0$, a contradiction with the definition of $Q$.
Note also that by the bound $E(r)<0$ for $r>0$, $V$ cannot vanish which implies
$$\forall r>0,\quad V(r)>0.$$

Since $E$ is bounded, $V$ is also bounded and, by \eqref{E'} we see that
$\int_0^{+\infty} r(V'(r))^2\,dr$ is finite. Let $r_n\to+\infty$, such that there exists $V_{\infty}\in \Real$ with
\begin{equation}\label{T38}
  \lim_{n\to\infty} V(r_n)=V_{\infty}\geq 0.
\end{equation}
If $r_n\leq r\leq 2r_n$,
\begin{equation}\label{T39}
  |V(r_n)-V(r)|\leq \int_{r_n}^{2r_n}|V'(r)|\,dr\leq \left(\int_{r_n}^{2r_n} \frac{dr}{r}\right)^{\frac{1}{2}}\left(\int_{r_n}^{2r_n}\left|V'(r)\right|^2r\,dr\right)^{\frac{1}{2}}\underset{n\to\infty}{\longrightarrow} 0.
\end{equation}
As a consequence,
\begin{equation}\label{T41}
  \lim_{n\to\infty}\left(\sup_{r_n\leq r\leq 2r_n} |V(r)-V_{\infty}|\right)=0.
\end{equation}
By the definition of $E$, one has
\begin{equation}\label{T42}
  \int_{r_n}^{2r_n} \frac{1}{r}E(r)\,dr=\frac{1}{2}\int_{r_n}^{2r_n} r \left(V'(r)\right)^2dr+\alpha(N-2-\alpha)\int_{r_n}^{2r_n} \frac{1}{r}\left(\frac{|V|^{p+1}}{p+1}-\frac{V^2}{2} \right)dr.
\end{equation}
Letting $n\to\infty$, we deduce, since $\lim_n\int_{r_n}^{2r_n} r \left(V'(r)\right)^2dr=0$,
\begin{equation}\label{T40}
 E_{\infty}=\alpha(N-2-\alpha)\left( \frac{V_{\infty}^{p+1}}{p+1}-\frac{V_{\infty}^2}{2} \right).
\end{equation}
As a consequence, $V_{\infty}$ is independent of the choice of the sequence $r_n$, which proves
\begin{equation}\label{T43}
  \lim_{r\to\infty} V(r)=V_{\infty}.
\end{equation}
Since $E_{\infty}<0$, we know that $V_{\infty}$ is nonzero. By the definition of the energy and \eqref{T40},
\begin{equation}\label{T44}
  \lim_{r\to\infty} r^2(V'(r))^2=0.
\end{equation}
We next prove by contradiction that $V_{\infty}=1$. Assume that $V_{\infty}\neq 1$. Thus $V_{\infty}-|V_{\infty}|^{p-1}V_{\infty}\neq 0$.
Integrating the equation:
\begin{equation}\label{T45}
  \frac{d}{dr} \left( r^{N-1-2\alpha} \frac{dV}{dr} \right)=r^{N-1-2\alpha} \frac{\alpha(N-2-\alpha)}{r^2} \left( V-|V|^{p-1}V \right)
\end{equation}
between $1$ and $r\gg 1$, and recalling our assumption $\alpha<\frac{N-2}{2}$, we obtain
\begin{equation}\label{T46}
  r^{N-1-2\alpha}\frac{dV}{dr}(r)-\frac{dV}{dr}(1)\sim \alpha(N-2-\alpha)\left( V_{\infty}-|V_{\infty}|^{p-1}V_{\infty} \right)\frac{r^{N-2-2\alpha}}{N-2-2\alpha},\quad r\to\infty.
\end{equation}
Hence, we arrive at
\begin{equation}\label{T47}
  \frac{dV}{dr}(r)\sim \frac{\alpha(N-2-\alpha)\left(V_{\infty}-|V_{\infty}|^{p-1}V_{\infty} \right)}{(N-2-2\alpha)\,r},\quad r\to\infty,
\end{equation}
a contradiction with the integrability of $r(V')^2(r)$. This proves \eqref{limVV'}, and thus \eqref{T02}, concluding the proof of the proposition.
\end{proof}

\begin{proof}[Proof of Corollary \ref{Cor0}]
By Proposition \ref{P:Thomas}, for any $0<\eps<\frac{1}{2}$, there exists $M:=M(\eps)\gg 1$ and $c(\eps)\gg 1$ such that, for any $c>c(\eps)$,
\begin{equation}\label{5-0}
  H_v\left(\frac{M}{c^{\frac{p-1}{2}}},c\right)\leq -\alpha(N-2-\alpha)\left[\frac{(1-2\eps)^2}{2}-\frac{(1-2\eps)^{p+1}}{p+1}\right].
\end{equation}
Since $H_v$ is decreasing with respect to $\rho\in(0,1]$, by the definition \eqref{2-3-0} of $H_{v}$ and \eqref{5-0}, we have, for any  $c>c(\eps)$ and $\frac{M}{c^{\frac{p-1}{2}}}\leq\rho<1$,
\begin{equation}\label{5-1}
  \frac{1}{2}\rho^2(1-\rho^2){v'}^2-\alpha(N-2-\alpha)\left[\frac{v^2}{2}-\frac{v^{p+1}}{p+1}\right]\leq-\alpha(N-2-\alpha)\left[\frac{(1-2\eps)^2}{2}-\frac{(1-2\eps)^{p+1}}{p+1}\right].
\end{equation}
In particular,
\begin{equation}\label{5-2}
  \frac{1}{2}v^2-\frac{1}{p+1}|v|^{p+1}\geq\frac{(1-2\eps)^2}{2}-\frac{(1-2\eps)^{p+1}}{p+1}.
\end{equation}
Let $G(v)=\frac{1}{2}v^2-\frac{1}{p+1}v^{p+1}$, which is monotonically increasing on $(0,1)$ and decreasing on $(1,+\infty)$. For all small $\eps$, let $r_{\eps}$ be the unique number larger than $1$ such that $G(1-2\eps)=G(1+r_{\eps})$, and note that $r_{\eps}$ tends to $0$ as $\eps$ tends to $0$. By \eqref{5-2}, for $\frac{M}{c^{\frac{p-1}{2}}}\leq \rho< 1$,
\begin{equation}\label{5-3}
  (1-2\eps)\leq v(\rho)\leq 1+r_{\eps},
\end{equation}
which yields \eqref{5-4'} immediately. From \eqref{5-1} we also obtain
$$\frac{1}{2}\rho^2(1-\rho^2){v'}^2\leq \alpha(N-2-\alpha)\left( \frac{1}{2}-\frac{(1-2\eps)^2}{2}+\frac{(1-2\eps)^{p+1}}{p+1}-\frac{1}{p+1} \right),$$
which yields \eqref{5-5}.
\end{proof}

\subsection{Asymptotic number of intersections with the singular solution}
\label{SS:intersection}
Let us define
\begin{equation}
 \label{defW}
w(\rho,c)=v(\rho,c)-1=\frac{u(\rho,c)}{u_{\infty}(\rho)}-1,
\end{equation}
with the convention that $w(0,c)=-1$ for all $c$, and
\begin{equation}\label{2-2-1}
R(\rho,c):=\sqrt{w(\rho,c)^{2}+\rho^{2}w^{\prime}(\rho,c)^{2}}, \qquad \forall \,\, 0<\rho<1.
\end{equation}
Then:
\begin{lem}
\label{L:Theta}
There exists a continuous function
$\Theta:[0,1)\times [0,\infty)\to \RR$
such that
\begin{equation*}
 \forall (\rho,c)\in [0,1)\times \RR,\quad \tan \Theta(\rho,c)=\frac{\rho w'(\rho,c)}{w(\rho,c)},
\end{equation*}
(with the convention that $\Theta(\rho,c)\in \frac{\pi}{2}+\pi \ZZ \iff w(\rho,c)=0$) and
\begin{gather*}
 \forall \rho\in [0,1),\quad \forall c\geq 0,\quad \Theta(0,c)=\Theta(\rho,0)=\pi.
\end{gather*}
\end{lem}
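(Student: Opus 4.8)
The function $\Theta$ is simply a continuous determination of the polar angle of the planar vector $\Psi(\rho,c):=\big(w(\rho,c),\,\rho\,w'(\rho,c)\big)$, normalized at a base point. Thus the plan is to (i) show that $\Psi$ never vanishes, (ii) show that $\Psi$ is continuous up to the edge $\rho=0$, and (iii) invoke the standard lifting of a nonvanishing continuous map on a simply connected set to produce $\Theta$, and finally read off the boundary values.

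\emph{Non-vanishing of $\Psi$.} Note that $|\Psi(\rho,c)|=R(\rho,c)$ for $\rho\in(0,1)$ by \eqref{2-2-1}, while at $\rho=0$ the convention $w(0,c)=-1$ gives $\Psi(0,c)=(-1,0)\ne(0,0)$. Suppose $\Psi(\rho_0,c)=(0,0)$ for some $\rho_0\in(0,1)$. Since $\rho_0>0$ this forces $w(\rho_0,c)=0$ and $w'(\rho_0,c)=0$, that is $u(\rho_0,c)=u_{\infty}(\rho_0)$ and $u'(\rho_0,c)=u_{\infty}'(\rho_0)$. Both $u(\cdot,c)$ and $u_{\infty}$ solve \eqref{PDE} on $(0,1)$, where this is a non-degenerate second order ODE ($1-\rho^2>0$) whose nonlinearity is locally Lipschitz near the value $u_{\infty}(\rho_0)=b_{\infty}\rho_0^{-\alpha}>0$. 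By Cauchy--Lipschitz uniqueness, $u(\cdot,c)\equiv u_{\infty}$ on $(0,1)$, which is impossible because $u(\cdot,c)$ is bounded near $\rho=0$ by Proposition \ref{P:local0}, whereas $u_{\infty}(\rho)\to+\infty$. Hence $\Psi(\rho,c)\ne(0,0)$ on all of $[0,1)\times[0,\infty)$.

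\emph{Continuity of $\Psi$.} Writing $w=\rho^{\alpha}u/b_{\infty}-1$ and $\rho w'=(\rho^{\alpha+1}u'+\alpha\rho^{\alpha}u)/b_{\infty}$, the continuity of $\Psi$ on $(0,1)\times[0,\infty)$ follows from the continuity of $(\rho,c)\mapsto(u,u')$ granted by Proposition \ref{P:local0}, together with the smoothness and nonvanishing of $u_{\infty}$ on $(0,1)$. At the edge $\rho=0$, the local boundedness of $u$ and $u'$ together with $\rho^{\alpha},\rho^{\alpha+1}\to0$ show that $w\to-1$ and $\rho w'\to0$ as $(\rho,c)\to(0,c_0)$, matching the values $\Psi(0,c_0)=(-1,0)$; note that this is exactly why one works with $\rho w'$ rather than $w'$, the latter possibly diverging at $\rho=0$. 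Thus $\Psi$ is continuous on the convex (hence simply connected and locally path connected) set $X:=[0,1)\times[0,\infty)$, with values in $\RR^2\setminus\{(0,0)\}$.

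\emph{Lifting and normalization.} Because $X$ is simply connected, the continuous nonvanishing map $\Psi/R\colon X\to\mathbb{S}^1$ lifts through the covering $\RR\to\mathbb{S}^1$, $\theta\mapsto(\cos\theta,\sin\theta)$, to a continuous $\Theta\colon X\to\RR$ satisfying $w=R\cos\Theta$ and $\rho w'=R\sin\Theta$, unique up to an additive constant in $2\pi\ZZ$; we fix it by $\Theta(0,0)=\pi$. These identities give $\tan\Theta=\rho w'/w$ whenever $w\ne0$, and $\Theta\in\frac{\pi}{2}+\pi\ZZ\iff\cos\Theta=0\iff w=0$, as required. Finally $\Psi\equiv(-1,0)$ both on $\{\rho=0\}$ (by the convention) and on $\{c=0\}$ (since $u(\cdot,0)\equiv0$, whence $w\equiv-1$); on each of these two connected sets $\Theta$ is continuous with values in the discrete set $\pi+2\pi\ZZ$, hence constant and equal to $\Theta(0,0)=\pi$, yielding $\Theta(0,c)=\Theta(\rho,0)=\pi$. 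The only substantive point is the non-vanishing of $\Psi$ via the uniqueness argument; the construction of the continuous argument is routine. (Alternatively $\Theta$ can be built by integrating the Pr\"ufer-type angle equation coming from the second order ODE for $w$, a viewpoint that will in any case be convenient for the zero-counting of Proposition \ref{prop0}.)
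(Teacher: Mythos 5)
Your proposal is correct and follows essentially the same route as the paper: non-vanishing of $(w,\rho w')$ via ODE uniqueness (the paper phrases the contradiction at the data $(c,0)$ at $\rho=0$, you via boundedness of $u(\cdot,c)$ versus the blow-up of $u_{\infty}$ — equivalent), continuous extension to the edge $\rho=0$ using $w\to-1$, $\rho w'\to 0$, lifting over the simply connected domain, and normalization on $\{c=0\}$ to get $\Theta(\rho,0)=\Theta(0,c)=\pi$. The only cosmetic difference is that you lift the full polar angle through $\RR\to\mathbb{S}^1$ (mod $2\pi$), while the paper lifts the $\arctan$-angle valued in $\RR/\pi\ZZ$; both give the stated conventions.
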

\begin{proof}
By uniqueness of solutions of ODEs, a solution starting at $(u,u')=(c,0)$ at $\rho=0$ cannot have $(u(\rho,c),u'(\rho,c))=(u_{\infty}(\rho),u'_{\infty}(\rho))$, which implies that $R(\rho,c)>0$ for $0<\rho<1$. We may thus define
\begin{equation}\label{2-2-2}
\overline{\Theta}(\rho,c):=\arctan\left(\frac{\rho w^{\prime}(\rho,c)}{w(\rho,c)}\right) \in \ZZ/\pi\ZZ,
\end{equation}
with the convention that $\overline{\Theta}(\rho,c)\equiv \pi/2$ is $w(\rho,c)=0$.
By Proposition \ref{P:local0}, $\overline{\Theta}(\rho,c)$ is a continuous function on $\{(\rho,c)\,|\,0<\rho<1,\,c \geq 0\}$. Since
$$\lim_{\rho \to 0^+}w(\rho,c)=-1,\quad \lim_{\rho \to 0^+}\rho w'(\rho,c)=0,$$
uniformly in $c$ on any compact set,
we can extend $\overline{\Theta}$ to a continuous function on $[0,1)\times [0,\infty)$.
Since this region is simply connected, by a standard lifting lemma, we may unambiguously define a real-valued function $\Theta(\rho,c)$ such that $\Theta(\rho,c)\equiv \overline{\Theta}(\rho,c)$ in $\ZZ/\pi \ZZ$,
once we specify its value at some point in the domain. Note that for all $\rho \in [0,1)$, $w(\rho,0)=-1$, $w'(\rho,0)=0$. We thus can set $\Theta(\frac{1}{2},0)=\pi$, which implies $\Theta(\rho,0)=\pi$ for all $\rho\in [0,1)$. Since $\overline{\Theta}(0,c) \equiv 0$ for all $c\geq 0$, this also implies $\Theta(0,c)=\pi$ for all $c\geq 0$.
\end{proof}


The function $\Theta$ counts the number of intersections between $u(\cdot,c)$ and $u_{\infty}$ (see Lemma \ref{L:zeros} below). The following Proposition, which is crucial in the proof of Theorem \ref{Thm0}, implies that this number of intersections goes to infinity as $c\to\infty$.


We recall the definition of the Joseph-Lundgren exponent $p_{JL}$ (see \cite{JL}):
\begin{equation}
\label{defpJL}
p_{JL}:=
\begin{cases}
1+\frac{4}{N-4-2\sqrt{N-1}}, & N\geq11,  \cr +\infty, & 3\leq N\leq10.
\end{cases}
\end{equation}

\begin{prop}\label{prop0}
Assume $N\geq 3$ and  $1+\frac{4}{N-2}<p<p_{JL}$.
Then for any $0<\rho<1$, we have
\begin{equation}\label{2-2-0}
  \lim_{c\rightarrow+\infty}\Theta(\rho,c)=-\infty.
\end{equation}
\end{prop}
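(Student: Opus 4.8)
The plan is to track the oscillation of $u(\cdot,c)$ around $u_\infty$ in the rescaled variable $r=c^{\frac{p-1}{2}}\rho$ of the proof of Proposition~\ref{P:Thomas}, and to show that near the origin the number of rotations of the phase becomes unbounded as $c\to\infty$, precisely because $p<p_{JL}$. Recall from that proof the functions $\tu(r,c)$ with $u(\rho,c)=c\,\tu(c^{\frac{p-1}{2}}\rho)$, $\tilde v=\frac{r^{\alpha}}{b_{\infty}}\tu$, $V=\frac{r^{\alpha}}{b_{\infty}}Q$, and that \eqref{T20} gives $\tu\to Q$, $\tu'\to Q'$ uniformly on every $[0,M]$, hence $\tilde v\to V$ and $r\tilde v'\to rV'$ uniformly on $[0,M]$. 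By the chain rule $\rho\,\frac{d}{d\rho}=r\,\frac{d}{dr}$, so in the rescaled variable the vector $(w,\rho w')$ becomes $(\tilde v-1,\,r\tilde v')$, and therefore $\Theta(\rho,c)=\widetilde\Theta(c^{\frac{p-1}{2}}\rho,c)$, where $\widetilde\Theta(\cdot,c)$ is the continuous lift of the argument of $(\tilde v-1,r\tilde v')$ normalized by $\widetilde\Theta(0,c)=\pi$. Since the limiting vector $(V-1,rV')$ never vanishes for $r>0$ (otherwise $V\equiv 1$ by uniqueness, contradicting $V(0)=0$), the lift $\Theta_\infty(r)$ of the argument of $(V-1,rV')$ with $\Theta_\infty(0)=\pi$ is well defined, and $\widetilde\Theta(M,c)\to\Theta_\infty(M)$ as $c\to\infty$ for each fixed $M>0$.

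First I would record a ``downward crossing'' property of $\Theta$. At any point where $w(\rho,c)=0$ one has $x:=w=0$, $y:=\rho w'\neq0$, and from $\Theta'=\frac{xy'-yx'}{x^2+y^2}$ (with $x'=w'$, $y'=w'+\rho w''$) one computes $\Theta'(\rho,c)=-\frac{1}{\rho}<0$. Hence $\Theta$ crosses each level of $\frac{\pi}{2}+\pi\ZZ$ only in the decreasing direction, so that $\Theta$ can never rise above the upper endpoint of the interval of length $\pi$ in which it currently sits; this gives, for any $0<\rho'<\rho<1$,
\begin{equation*}
 \Theta(\rho,c)<\Theta(\rho',c)+\pi .
\end{equation*}
Applying this with $\rho'=\rho_c:=Mc^{-\frac{p-1}{2}}$ (which lies in $(0,\rho)$ for $c$ large) yields $\Theta(\rho,c)<\widetilde\Theta(M,c)+\pi$, whence $\limsup_{c\to\infty}\Theta(\rho,c)\le \Theta_\infty(M)+\pi$. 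Thus the proposition reduces to proving
\begin{equation*}
 \lim_{M\to\infty}\Theta_\infty(M)=-\infty .
\end{equation*}

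To establish this I would analyze $\Theta_\infty$ for large $r$, where by \eqref{limVV'} one has $V(r)\to1$ and $rV'(r)\to0$, so $(x,y)=(V-1,rV')\to(0,0)$. Writing $g(V)=V-|V|^{p-1}V=(1-p)(V-1)+O((V-1)^2)$ and using $r^2V''+(N-1-2\alpha)rV'=\alpha(N-2-\alpha)g(V)$, the same computation as above gives
\begin{equation*}
 \Theta_\infty'(r)=\frac{1}{r}\,\frac{q(x,y)+O(|x|^3)}{x^2+y^2},\qquad q(x,y)=-y^2-Ax^2+Bxy,
\end{equation*}
with $A=\alpha(N-2-\alpha)(p-1)>0$ and $B=2+2\alpha-N$. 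The form $q$ is negative definite iff $B^2<4A$, i.e. $(N-2-2\alpha)^2<4\alpha(N-2-\alpha)(p-1)$, which simplifies (using $\alpha=\frac{2}{p-1}$) to $(N-2)^2<4p\,\alpha(N-2-\alpha)$, i.e. the characteristic roots of the Euler linearization $s^2+(N-2)s+p\,\alpha(N-2-\alpha)=0$ are complex; this is exactly $p<p_{JL}$ (see \cite{JL}). Under this assumption $q\le-\delta(x^2+y^2)$ for some $\delta>0$, and since $|x|=|V-1|\to0$ the cubic error is absorbed, so $\Theta_\infty'(r)\le-\frac{\delta}{2r}$ for all large $r$; integrating yields $\Theta_\infty(r)\to-\infty$, as needed.

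The main obstacle is this last paragraph: pinning down the exact algebraic threshold at which the limit profile $V$ stops winding around $1$, namely verifying that negative-definiteness of $q$ coincides with $p<p_{JL}$, and controlling the nonlinear error so that the oscillatory linear regime genuinely forces infinitely many rotations rather than a bounded number. By contrast, the rescaling/convergence set-up and the downward-crossing monotonicity of $\Theta$ are comparatively routine once Proposition~\ref{P:Thomas} and \eqref{limVV'} are available.
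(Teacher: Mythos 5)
Your proposal is correct, but it takes a genuinely different route from the paper's. The paper works directly in the $\rho$ variable: using the monotonicity of $H_v$ it first upgrades Proposition \ref{P:Thomas} to the uniform lower bound $v(\rho,c)>1-\eps$ on the whole $c$-dependent interval $[\rho_1(c),\rho_2]$ (see \eqref{2-2-20}), then shows that the quadratic form in the phase equation \eqref{2-2-22} is uniformly coercive there --- the discriminant computation \eqref{2-2-25}--\eqref{T49}, which is exactly the same algebra as your negative-definiteness of $q$, both expressing $p<p_{JL}$ --- so that $\Theta'\leq -\delta/\rho$ and the winding is at least $\delta\log\left(\rho_2/\rho_1(c)\right)\to\infty$. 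You instead localize the spiraling entirely on the fixed limit profile: from \eqref{T20} you get convergence of the rescaled lift to $\Theta_\infty(M)$ for each fixed $M$, you prove $\Theta_\infty(r)\to-\infty$ by linearizing the $V$-equation \eqref{T45} around $V\equiv 1$ (using \eqref{limVV'} to send $|V-1|\to 0$ and absorb the cubic error), and you transfer the conclusion back through the downward-crossing inequality $\Theta(\rho,c)<\Theta(\rho',c)+\pi$. That crossing inequality is a nice device: it replaces the paper's bookkeeping of the three integrals in \eqref{2-2-27} (the contributions on $[\bar{\rho}(c),\rho_1(c)]$ and $[\rho_2,\rho]$, plus the bound $\Theta(\bar{\rho}(c),c)\leq\frac{3}{2}\pi$) by a single soft estimate, and it dispenses with the uniform-in-$c$ bound \eqref{2-2-20} altogether. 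What the paper's argument buys in exchange is quantitativeness: it gives an explicit lower bound on the winding, hence on the number of intersections of $u(\cdot,c)$ with $u_{\infty}$, of order $\log c$, whereas your compactness step yields divergence without a rate. Two details you handled correctly and which are essential for your route: the nonvanishing of $(V-1,rV')$ for $r>0$ (by ODE uniqueness and $V(0)=0$), without which the lift $\Theta_\infty$ and the convergence of lifts would fail; and the verification that $B^2<4A$ is equivalent to $p<p_{JL}$, which matches the paper's identity \eqref{T49} after using $\alpha(p-1)=2$. Contrary to your closing remark, this last point is not really an obstacle: the spiraling of the supercritical ground-state profile around the singular solution below the Joseph--Lundgren exponent is classical \cite{JL}, and your error-absorption argument (bounding the $O(|x|^3)$ term by $\frac{\delta}{2}(x^2+y^2)$ once $|V-1|$ is small) is sound as written.
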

\begin{rem}
 The\ednote{Thomas: new remark to clarify the assumption that $\rho$ is small. {\color{red} Wei: Thank you for adding this remark. I have one question: in this Remark 2.7, we should use $\Theta(\rho,c) \in \frac{\pi}{2}\ZZ$ or $\Theta(\rho,c) \in \pi\ZZ+\frac{\pi}{2}$? Which one is better?} {\color{blue} Thomas: if I did not make any mistake, it is true when $\Theta\in \frac{\pi}{2}\ZZ$ so I guess that we can leave it as it is}} limit \eqref{2-2-0} was derived in \cite[Lemma 2]{BMW} for $N=3$, with the additional assumption that $\rho$ is small. Let us mention that this assumption can be easily removed,  since by the equation \eqref{2-2-22} in the proof below,
 $$\Theta(\rho,c) \in \frac{\pi}{2}\ZZ\Longrightarrow \Theta'(\rho,c)<0,$$
 and thus \eqref{2-2-0} for some $\rho_0$ implies the same property for all $\rho\in(\rho_0,1)$.
\end{rem}

\begin{proof}
We fix $\eps>0$ small, and let $M=M\left(\frac{\eps}{2}\right)$ given by Proposition \ref{P:Thomas}. Let
\begin{equation}\label{T48}
  \rho_1(c)=\frac{M}{c^{\frac{p-1}{2}}}.
\end{equation}

As a consequence of Proposition \ref{P:Thomas}, and since, by \eqref{2-2-4}, $H_v$ is nonincreasing, we derive the following upper bound for $\rho\geq \rho_1(c)$, and $c\geq c_0$ (where $c_0$ is a large constant depending only on $\eps$):
\begin{equation}\label{2-2-14}
H_{v}(\rho,c)\leq H_{v}\left(\rho_{1},c\right)\leq -\alpha(N-2-\alpha)\left( \frac{(1-\eps)^2}{2} -\frac{(1-\eps)^{p+1}}{p+1}\right),
\end{equation}
for any $\rho\in[\rho_{1},1]$. On the other hand, we have
\begin{eqnarray}\label{2-2-15}
H_{v}(\rho,c)&=&\frac{1}{2}\rho^{2}\left(1-\rho^{2}\right)(v')^{2}-\alpha(N-2-\alpha)\left(\frac{v^{2}}{2}-\frac{v^{p+1}}{p+1}\right) \\
\nonumber &\geq&-\alpha(N-2-\alpha)\left(\frac{v^{2}}{2}-\frac{v^{p+1}}{p+1}\right).
\end{eqnarray}
Combining \eqref{2-2-14} with \eqref{2-2-15}, we arrive at
\begin{equation}\label{2-2-16}
\alpha(N-2-\alpha)\left(\frac{v^{2}}{2}-\frac{v^{p+1}}{p+1}\right)\geq \alpha(N-2-\alpha)\left(\frac{(1-\eps)^{2}}{2}-\frac{(1-\eps)^{p+1}}{p+1}\right).
\end{equation}
for any $\rho\in[\rho_{1},1]$. Since $v\mapsto \frac{v^2}{2}-\frac{|v|^{p+1}}{p+1}$ is increasing on $[0,1]$, we deduce from \eqref{2-2-16} that
\begin{equation}\label{2-2-20}
  v(\rho,c)>1-\eps,\quad \forall \, c\geq c_{0},\quad \forall \, \rho\in[\rho_{1}(c),1].
\end{equation}

Now, we consider the equation satisfied by the function $\Theta(\rho,c)$: for $0<\rho<1$,
\begin{eqnarray}\label{2-2-22}
\Theta^{\prime}(\rho,c)&=&-\frac{1}{\rho}\bigg[\sin ^{2} \Theta+\frac{(N-2-2\alpha)-\rho^{2}}{1-\rho^{2}} \sin \Theta \cos \Theta \\
\nonumber &&\qquad+\frac{\alpha(N-2-\alpha)v}{1-\rho^{2}}\left(\frac{1-|v|^{p-1}}{1-v}\right) \cos ^{2} \Theta\bigg]\\
\nonumber &=:&-\frac{1}{\rho}\left[A\sin ^{2} \Theta+B\sin \Theta \cos \Theta+C\cos ^{2} \Theta\right],
\end{eqnarray}
with the coefficients
\begin{equation}\label{2-2-23}
A:=1, \quad B:=\frac{(N-2-2\alpha)-\rho^{2}}{1-\rho^{2}}, \quad C:=\frac{\alpha(N-2-\alpha)v}{1-\rho^{2}}\left(\frac{1-|v|^{p-1}}{1-v}\right).
\end{equation}

We will show that, there exists a $0<\rho_{2}<<1$ such that, for any $c\geq c_{0}$ sufficiently large, the quadratic form w.r.t. $\sin\Theta$ and $\cos\Theta$ in the bracket $[\cdots]$ in \eqref{2-2-22} is bounded from below by a positive constant $\delta$ (depending only on $N$ and $p$) for any $\rho_{1}(c)\leq\rho\leq\rho_{2}$, that is,
\begin{equation}\label{2-2-28}
  A\sin ^{2} \Theta+B\sin \Theta \cos \Theta+C\cos ^{2} \Theta\geq\delta>0, \qquad \forall \,\, \rho_{1}(c)\leq\rho\leq\rho_{2}.
\end{equation}

To this end, we only need to show that there exist a $0<\rho_{2}<<1$ and a positive constant $\eta$ (depending only on $N$ and $p$) such that the discriminant $\Delta=B^{2}-4AC$ satisfies $\Delta \leq-\eta$ for any $v>1-\eps$ and $\rho\leq\rho_{2}$. Since $\rho\leq\rho_{2}<<1$, we may replace these coefficients $A$, $B$ and $C$ by
\begin{equation}\label{2-2-24}
A=1, \quad \tilde{B}:=N-2-2\alpha, \quad \tilde{C}:=\alpha(N-2-\alpha)v\left(\frac{1-|v|^{p-1}}{1-v}\right),
\end{equation}
and (using the lower bound \eqref{2-2-20} of $v$), show that the discriminant
\begin{equation}\label{2-2-25}
\tilde{\Delta}=\tilde{B}^{2}-4A\tilde{C}=(N-2-2\alpha)^{2}-4\alpha(N-2-\alpha)\frac{v^{p}-v}{v-1}\leq-2\eta<0
\end{equation}
for any $v>1-\eps$. Since $\tilde{\Delta}$ is a decreasing function of $v$, \eqref{2-2-25} follows immediately, for small $\eps$, once we have derived the following inequality:
\begin{equation}\label{2-2-26}
(N-2-2\alpha)^{2}-4\alpha(N-2-\alpha)(p-1)<0.
\end{equation}
Since $1+\frac{4}{N-2}<p<p_{JL}$, one has $\max\{\frac{N-4}{2}-\sqrt{N-1},0\}<\alpha<\frac{N-2}{2}$, and hence $\left|\alpha-\frac{N-4}{2}\right|<\sqrt{N-1}$. By direct computation, using that $\alpha(p-1)=2$, we have
\begin{equation}\label{T49}
  (N-2-2\alpha)^{2}-4\alpha(N-2-\alpha)(p-1)=4\left[\left(\alpha-\frac{N-4}{2}\right)^{2}-\left(N-1\right)\right]<0,
\end{equation}
which yields \eqref{2-2-26} immediately.

Therefore, we have derived \eqref{2-2-25} and hence the existence of $0<\rho_{2}\ll 1$ such that $\Delta\leq-\eta<0$ for any $v>1-\eps$ and $\rho\leq\rho_{2}$. Thus \eqref{2-2-28} holds for any large $c$ and any $\rho$ such that $\rho_{1}(c)\leq\rho\leq\rho_{2}$.

Consequently, we derive from \eqref{2-2-22} and \eqref{2-2-28} that, for any given $0<\rho<1$,
\begin{eqnarray}\label{2-2-27}
&& \lim _{c \rightarrow +\infty} \Theta\left(\rho, c\right) \\
\nonumber &=&\lim _{c \rightarrow +\infty}\left[\Theta\left(\bar{\rho}(c),c\right)+\left(\int_{\bar{\rho}(c)}^{\rho_{1}(c)}+\int_{\rho_{1}(c)}^{\rho_{2}}+\int_{\rho_{2}}^{\rho}\right)\Theta^{\prime}(\rho,c)d\rho\right] \\
\nonumber &\leq& \frac{3}{2}\pi+\lim _{c \rightarrow +\infty}\left[\int_{\bar{\rho}(c)}^{\rho_{1}(c)}\frac{2(N-2-2\alpha)}{\rho}d\rho+
\int_{\rho_{1}(c)}^{\rho_{2}}\left(-\frac{\delta}{\rho}\right)d\rho+\int_{\rho_{2}}^{\rho}\frac{(N-2-2\alpha)+\rho^{2}}{\rho_{2}(1-\rho^{2})}d\rho\right] \\
\nonumber &\leq&\frac{3}{2}\pi+2(N-2-2\alpha)\ln\frac{M}{b_{\infty}^{\frac{p-1}{2}}}+\frac{N-1-2\alpha}{\rho_{2}(1-\rho^{2})}-\delta\lim _{c \rightarrow+\infty} \ln \frac{\rho_{2}}{\rho_{1}(c)}=-\infty,
\end{eqnarray}
where $\bar{\rho}(c):=\min\{\rho_{1}(c),\rho_{\ast}(c)\}$ and $\rho_{\ast}(c)>\left(\frac{b_{\infty}}{c}\right)^{\frac{p-1}{2}}$ is the first root of $w(\rho,c)$ (if $w(\rho,c)<0$ for $\rho\in(0,1]$, we set $\rho_{\ast}(c)=1$). Since $\lim_{\rho\rightarrow0+}\Theta(\rho,c)=\pi$, one can infer that $\Theta\left(\bar{\rho}(c),c\right)\leq\frac{3}{2}\pi$. This concludes our proof of Proposition \ref{prop0}.
\end{proof}

We conclude this subsection by giving the link between the functions $\Theta$ and the number of intersections between $u(\cdot,c)$ and $u_{\infty}$.
\begin{lem}
\label{L:zeros}
 Let $0\leq \rho_1<\rho_2<1$. Then the number of zeros of $w(\cdot,c)$ on $[\rho_1,\rho_2)$ is given by
 $$\ent{\frac{\Theta(\rho_1,c)}{\pi}-\frac{1}{2}}-\ent{\frac{\Theta(\rho_2,c)}{\pi}-\frac{1}{2}},$$
 where $\ent{x}$ is the integer part of $x$.
\end{lem}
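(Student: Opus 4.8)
The plan is to read $\Theta(\cdot,c)$ as a bona fide angular variable and to count the zeros of $w(\cdot,c)$ as the crossings of $\Theta$ through the horizontal directions $\frac{\pi}{2}+\pi\ZZ$. Set $\phi(\rho):=\frac{\Theta(\rho,c)}{\pi}-\frac12$. By the convention fixed in Lemma \ref{L:Theta}, we have the equivalences
\[
w(\rho,c)=0\iff \Theta(\rho,c)\in \tfrac{\pi}{2}+\pi\ZZ\iff \phi(\rho)\in\ZZ,
\]
so the zeros of $w(\cdot,c)$ on $[\rho_1,\rho_2)$ are precisely the points at which $\phi$ takes an integer value, while the right-hand side of the asserted formula is $\lfloor\phi(\rho_1)\rfloor-\lfloor\phi(\rho_2)\rfloor$.

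The decisive step is a monotonicity observation. At any $\rho\in(0,1)$ with $w(\rho,c)=0$ one has $\cos\Theta(\rho,c)=0$ and $\sin^2\Theta(\rho,c)=1$, so the formula \eqref{2-2-22} for $\Theta'$ collapses, using $A=1$ from \eqref{2-2-23}, to $\Theta'(\rho,c)=-1/\rho<0$; equivalently $\phi'(\rho)<0$ whenever $\phi(\rho)\in\ZZ$. Since $\Theta$ solves the first-order equation \eqref{2-2-22} it is $C^1$ on $(0,1)$, and this negativity shows that $\phi$ meets each integer level transversally and only in the decreasing direction, so it can never cross an integer upward. From this I would extract two facts: first, the integer values of $\phi$ are isolated, hence (together with $w(\rho,c)\to-1$ as $\rho\to0^+$, which keeps zeros away from the origin) finite in number on any $[\rho_1,\rho_2]$ with $\rho_2<1$; second, the step function $g(\rho):=\lfloor\phi(\rho)\rfloor$ is non-increasing on $[\rho_1,\rho_2]$, staying constant between consecutive zeros of $w$ and dropping by exactly $1$ at each of them.

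It then remains to turn the total decrease of $g$ into the stated count, and this is the only delicate point, since the interval $[\rho_1,\rho_2)$ is half-open. The key structural feature is that $\phi$ reaches an integer $k$ from the band $(k,k+1)$ and leaves it into $(k-1,k)$; hence $g$ is left-continuous, and at a zero $\rho^*$ with $\phi(\rho^*)=k$ one has $g(\rho^*)=k$ but $g(\rho)=k-1$ for $\rho$ slightly to the right of $\rho^*$. In other words the unit drop of $g$ sits immediately to the right of each zero. Writing $g(\rho_1)-g(\rho_2)$ as the sum of the unit jumps of $g$ over $(\rho_1,\rho_2]$, a jump attached to a zero $\rho^*$ lies in $(\rho_1,\rho_2]$ exactly when $\rho^*\in[\rho_1,\rho_2)$: an interior zero and a zero at $\rho_1$ are counted, while a zero at $\rho_2$ is discarded, in perfect agreement with the half-open convention. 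This yields
\[
\lfloor\phi(\rho_1)\rfloor-\lfloor\phi(\rho_2)\rfloor=g(\rho_1)-g(\rho_2)=\#\{\rho^*\in[\rho_1,\rho_2):w(\rho^*,c)=0\},
\]
which is the claim.

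I expect the main obstacle to be neither the monotonicity (immediate from \eqref{2-2-22}) nor the finiteness of the zero set, but precisely this endpoint bookkeeping: one has to check that the floor function, through the left-continuity of $g$, automatically counts a zero located at the left endpoint $\rho_1$ while rejecting one at the right endpoint $\rho_2$. Recasting the difference $g(\rho_1)-g(\rho_2)$ as a sum of jumps over $(\rho_1,\rho_2]$, as above, is the cleanest way I see to make this rigorous without a proliferation of boundary cases.
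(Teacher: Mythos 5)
Your proof is correct and takes essentially the same route as the paper: both hinge on the observation from \eqref{2-2-22} that $\Theta'(\xi,c)=-1/\xi<0$ at any zero of $w(\cdot,c)$, so that $\Theta$ crosses the levels $\frac{\pi}{2}+\pi\ZZ$ transversally and only downward, and then convert these crossings into the stated floor-function difference. Your jump-counting of the left-continuous step function $\lfloor\phi\rfloor$ is merely a more explicit bookkeeping of the paper's count of the cardinality of $\bigl(\Theta(\rho_2,c),\Theta(\rho_1,c)\bigr]\cap\bigl(\pi\ZZ+\frac{\pi}{2}\bigr)$, with the same handling of the half-open endpoint conventions.
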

\begin{proof}
By the definition of $\Theta$,
 $$w(\rho,c)=0 \iff \Theta(\rho,c)\in \pi\mathbb{Z}+\frac{\pi}{2}.$$
 We first consider the case where $w(\cdot,c)$ has at least one zero on $[\rho_1,\rho_2)$.
If $\xi$ is one of these zeros, we know that $w'(\xi,c)\neq 0$ and, by \eqref{2-2-22},
$$\Theta'(\xi,c)=-\frac{1}{\xi}<0.$$
Thus $\Theta(\cdot,c)$ decreases in a neighborhood of $\xi$. This proves that if $w(\cdot,c)$ has at least one zero on $[\rho_1,\rho_2)$, one must have $\Theta(\rho_1,c)>\Theta(\rho_2,c)$, and that the number of zeros of $w(\cdot,c)$ on $[\rho_1,\rho_2)$ is given by the cardinal of
$$\big(\Theta(\rho_2,c),\Theta(\rho_1,c)\big]\cap \left\{\pi \mathbb{Z}+\frac{\pi}{2}\right\},$$
which is exactly $\ent{\frac{\Theta(\rho_1,c)}{\pi}-\frac{1}{2}}-\ent{\frac{\Theta(\rho_2,c)}{\pi}-\frac{1}{2}}$.

If $w(\cdot,c)$ has no zero on $[\rho_1,\rho_2)$, it means that
$\{\Theta(\rho,c),\; \rho_1\leq \rho<\rho_2\}$ does not intersect $\frac{\pi}{2}+\pi\mathbb{Z}$. Note that if $w(\rho_2)=0$, then by the above consideration $\Theta(\rho,c)>\Theta(\rho_2,c)$ if $\rho\in [\rho_1,\rho_2)$. In any case, it is easy to check that   $\ent{\frac{\Theta(\rho_1,c)}{\pi}-\frac 12}-\ent{\frac{\Theta(\rho_2,c)}{\pi}-\frac 12}=0$.
 \end{proof}

\section{Existence of self-similar solutions in the case $p<1+\frac{4}{N-3}$}
\label{S:subcritical_case}

In this Section, we will carry out the proof of Theorem \ref{Thm0}. The proof follows the general strategy of the appendix of \cite{BMW} (see also \cite{L}) where the result is proved for $N=3$ and integer $p$. The two crucial ingredients are Propositions \ref{P:Thomas} and \ref{prop0} proved in Section \ref{S:regular}.  We first develop the well-posedness theory for equation \eqref{PDE} at $\rho=1$.  The conclusion of the proof of Theorem \ref{Thm0}, in Subsection \ref{SS:SM}, is divided into two propositions: Proposition \ref{prop1} is devoted to the case where the number of intersections between $u$ and $u_{\infty}$ is even and Proposition \ref{prop2} to the case (which is not detailed in \cite{BMW}) where it is odd.
\subsection{Existence of regular solutions at the boundary of the wave cone}
In the case $p<1+\frac{4}{N-3}$, the well-posedness statement in the neighborhood of $\rho=1$ is as follows.
\begin{prop}
\label{P:local1}
Assume $1+\frac{4}{N-2}<p<1+\frac{4}{N-3}$.
Let $b\in \RR$. There exists a unique $C^2$ solution $U(\cdot,b)$ of \eqref{PDE} defined in a neighborhood of $\rho=1$ such that $U(1,b)=b$. The solution $U$ can be extended to a maximal solution $(0,\rho_+(b))$, with $\rho_+(b)\in (1,\infty]$, such that
$$\rho_+(b)<\infty\Longrightarrow \lim_{\rho\to\rho_+(b)}|U(\rho,b)|=+\infty.$$
Furthermore,
$$U'(1,b)=\frac{b^p-b_0^{p-1}b}{\frac{2(p+1)}{p-1}-(N-1)}.$$
The set $\Omega=\left\{ (\rho,b)\in (0,+\infty)\times \RR,\; 0<\rho<\rho_+(b)\right\}$ is open and $U$ and $U'$ are continuous on $\Omega$.
\end{prop}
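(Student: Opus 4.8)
The plan is to establish local well-posedness at $\rho=1$ by the same fixed-point strategy used at $\rho=0$ in Proposition \ref{P:local0}, adapted to the (regular) singular point $\rho=1$. The key structural observation is that in the self-adjoint form \eqref{2-2-5}, the coefficient $(1-\rho^2)^{\alpha-\frac{N-3}{2}}$ carries the exponent $\alpha-\frac{N-3}{2}$, which under the hypothesis $1+\frac{4}{N-2}<p<1+\frac{4}{N-3}$ satisfies $0<\alpha-\frac{N-3}{2}<1$. Thus $\rho=1$ is a point where the leading coefficient degenerates, but mildly enough that one still expects a $C^2$ solution determined by the single datum $U(1,b)=b$; the derivative $U'(1,b)$ is then \emph{forced} rather than free. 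I would first derive the formula for $U'(1,b)$ by plugging $\rho=1$ into \eqref{PDE}: at $\rho=1$ the coefficient $(1-\rho^2)$ of $u''$ vanishes, and collecting the remaining terms gives a linear relation
\begin{equation*}
 \left(\frac{2(p+1)}{p-1}-(N-1)\right)U'(1,b)=b^p-b_0^{p-1}b,
\end{equation*}
using $\alpha(\alpha+1)=\frac{2(p+1)}{(p-1)^2}=b_0^{p-1}\cdot\frac{2}{p-1}$ and $2(\alpha+1)=\frac{2(p+1)}{p-1}$. The hypothesis $p<1+\frac{4}{N-3}$ is exactly what makes $\frac{2(p+1)}{p-1}-(N-1)\neq 0$, so this determines $U'(1,b)$ uniquely.

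\textbf{For the existence and uniqueness}, I would change variables to $s=1-\rho$ (or work directly near $\rho=1$) and recast \eqref{PDE} as an integral equation analogous to the one in Step 1 of Proposition \ref{P:local0}, integrating the self-adjoint form \eqref{2-2-5} twice against the weight. Concretely, writing $W(\rho)=\rho^{N-1}(1-\rho^2)^{\alpha-\frac{N-3}{2}}$, one integrates $(W u')'=\rho^{N-1}(1-\rho^2)^{\alpha-\frac{N-1}{2}}u(b_0^{p-1}-|u|^{p-1})$ from $1$ to $\rho$, and then divides by $W$ and integrates again. Because $0<\alpha-\frac{N-3}{2}<1$, the weight $W$ vanishes like $(1-\rho)^{\alpha-\frac{N-3}{2}}$ at $\rho=1$, the right-hand factor $(1-\rho^2)^{\alpha-\frac{N-1}{2}}$ is \emph{not} integrable by itself but the antiderivative $Wu'$ is controlled, and the double integral defining the contraction map produces a kernel that is integrable near $\rho=1$ with a small factor on a short interval. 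This yields a unique fixed point $U(\cdot,b)\in C^2$ in a two-sided neighborhood of $\rho=1$, continuous in $(\rho,b)$, and the computation above shows its derivative at $1$ equals the claimed value. I would present this carefully (unlike Step 1 of Proposition \ref{P:local0}, which was omitted) since it is the genuinely new local theory.

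\textbf{For the extension to a maximal interval} and the blow-up alternative, the argument is standard ODE continuation: away from $\rho=0,1$ the equation \eqref{PDE} is a regular (non-degenerate) second-order ODE, so by Cauchy--Lipschitz the solution extends to a maximal interval, and if the right endpoint $\rho_+(b)$ is finite the solution must leave every compact set, i.e. $|U(\rho,b)|\to+\infty$; here one uses that $U'$ cannot blow up while $U$ stays bounded, which follows from the equation on any compact subinterval of $(0,\infty)$ away from $1$. The openness of $\Omega$ and joint continuity of $U,U'$ on $\Omega$ then follow from continuous dependence on initial data for regular ODEs, combined with the local continuity in $(\rho,b)$ near $\rho=1$ established in the fixed-point step. \textbf{The main obstacle} is the fixed-point construction at the degenerate point $\rho=1$: one must choose the right weighted function space and verify that the integral operator is a contraction despite the non-integrable-looking factor $(1-\rho^2)^{\alpha-\frac{N-1}{2}}$, showing that the $C^2$ regularity up to $\rho=1$ survives and that the forced value of $U'(1,b)$ emerges consistently from the scheme.
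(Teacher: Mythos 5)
Your local theory at $\rho=1$ is essentially the paper's argument: the same derivation of the forced value of $U'(1,b)$ by evaluating \eqref{PDE} at $\rho=1$, and the same construction by integrating the self-adjoint form twice and running a contraction on a short interval (this is exactly the operator $\Psi$ of \eqref{LWP11}), with continuity in $b$ extracted from the contraction estimate. The genuine gap is in the extension step. The proposition asserts that the maximal interval is $(0,\rho_+(b))$, i.e.\ that the solution \emph{never} blows up when continued backward through $(0,1)$; ``standard ODE continuation'' only yields a maximal interval $(\rho_-(b),\rho_+(b))$ and gives no reason why $\rho_-(b)=0$. Since the nonlinearity is superlinear and focusing, blow-up at a finite $\rho^*\in(0,1)$ is a real a priori possibility — indeed finite-$\rho$ blow-up genuinely occurs forward of $\rho=1$ when $b>b_0$ (Proposition \ref{P:A0}) — so an a priori bound on $U$ over $(0,1)$ must be proved, not assumed. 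Your remark that ``$U'$ cannot blow up while $U$ stays bounded'' addresses the wrong quantity: it is the boundedness of $U$ itself that needs an argument. The paper supplies it (Step 3 of its proof) via the Lyapunov function $H$ of \eqref{2-3-0}: integrating backward from $\rho=1$, $H$ can increase, but the differential inequality \eqref{2-3-5},
\begin{equation*}
-\frac{H'(\rho)}{1+\frac{1}{p-1}b_0^2+H(\rho)}\leq \frac{2(N-1)}{\delta\rho},
\end{equation*}
integrates (Gr\"onwall) to show $H(\rho)$, and hence $U$ and $U'$, stay finite at every $\rho\in(0,1]$. Without this (or an equivalent energy bound), your proof does not establish the stated domain $(0,\rho_+(b))$, nor therefore the stated form of $\Omega$.

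Two smaller inaccuracies, neither fatal: your assertion that $(1-\rho^2)^{\alpha-\frac{N-1}{2}}$ is ``not integrable by itself'' is false — by your own observation $\frac{N-3}{2}<\alpha<\frac{N-2}{2}$, the exponent lies in $\left(-1,-\frac12\right)$, so this factor \emph{is} integrable near $\rho=1$, and this integrability (equivalent to $p<1+\frac{4}{N-3}$) is precisely what eliminates the boundary term when integrating \eqref{2-2-5} and makes $\Psi(U)$ well defined on continuous functions. Likewise, what makes $\frac{2(p+1)}{p-1}-(N-1)=2\alpha-(N-3)$ nonzero is $p\neq 1+\frac{4}{N-3}$; the strict inequality $p<1+\frac{4}{N-3}$ gives its sign (positivity), while the equality case requires the entirely different local theory of Proposition \ref{P:local2}. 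Correcting these does not change your scheme; the missing backward a priori bound is the substantive issue to repair.
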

Of course, Proposition \ref{P:local1} does not exclude solutions which are regular at $\rho=1$ and singular at $\rho=0$, as shows the explicit singular solution $u_{\infty}$, defined by \eqref{singular}. Singularity might also form for $\rho>1$. More precisely,
 we will prove in Section \ref{S:extension} that $\rho_+(b)<\infty$ if $|b|>b_0$, and that $\rho_+(b)=+\infty$ if $|b|\leq b_0$ and $N=3$, or if $N\geq 4$ and $b_{\infty}-\eps<|b|\leq b_0$ for some $\eps>0$.
\begin{rem}
 The assumption $N=3$ or $p<1+\frac{4}{N-3}$ is crucial in Proposition \ref{P:local1}. See Proposition \ref{P:local2} for the case $N\geq4$, $p=1+\frac{4}{N-3}$. The case $N\geq 4$, $p>1+\frac{4}{N-3}$ seems more complicated, as show the theoretical and numerical investigations in \cite{R}.
\end{rem}

\begin{proof}[Proof of Proposition \ref{P:local1}]
\noindent\emph{Step 1. Local existence and uniqueness.}
We start by proving the local existence and uniqueness for solutions of \eqref{PDE} with initial data at the boundary point $\rho=1$.

We fix $b>0$. The case $b<0$ can be deduced by changing $U$ into $-U$. The case $b=0$ is commented in the end of this step.

We see that $U$ solves \eqref{PDE} if and only if
\begin{equation}
 \label{LWP10}
 \left( \rho^{N-1}\left|\rho^2-1 \right|^{\alpha-\frac{N-1}{2}}(\rho^2-1)U' \right)'=-\rho^{N-1}|\rho^2-1|^{\alpha-\frac{N-1}{2}}U(b_0^{p-1}-|U|^{p-1}).
\end{equation}
Integrating twice, we see that $U$ is a $C^2$ solution of \eqref{PDE} such that $U(1)=b$, on an interval $I$ containing $1$ if and only if
$$\forall \rho\in I,\quad U(\rho)=b+\Psi(U),$$
where
\begin{equation}
 \label{LWP11}
 \Psi(U)(\rho)=\int_{\rho}^1\int_{\tau}^1 \left( \frac{\sigma}{\tau} \right)^{N-1}\left( \frac{1-\sigma^2}{1-\tau^2} \right)^{\alpha-\frac{N-1}{2}}U(\sigma)\left(b_0^{p-1}-|U(\sigma)|^{p-1}\right)d\sigma (1-\tau^2)^{-1}d\tau.
\end{equation}
We note that we have used that $p<1+\frac{4}{N-3}$, so that $\frac{N-3}{2}<\alpha$. This implies that there is no boundary term at $\rho=1$ when integrating \eqref{LWP10}, and that the integral defining $\Psi(U)$ converges if $U$ is continuous.

We fix a small $\eps_b>0$ to be specified. We will prove that $U\mapsto b+\Psi(U)$ is a contraction of the metric space
$$Y_b:=\left\{U\in C^0([1-\eps_b,1+\eps_b],\RR),\; \forall \rho\in [1-\eps_b,1+\eps_b], \; 0\leq U(\rho)\leq 2b\right\},$$
with the metric induced by the $L^{\infty}$ norm, and that
\begin{equation}
 \label{LWP20}
 \forall (U,V)\in Y_b^2,\quad
 \left\|\Psi(U)-\Psi(V)\right\|_{\infty}\leq \frac{1}{4}\|U-V\|_{\infty},
\end{equation}
where $\|U\|_{\infty}=\sup_{1-\eps_b\leq \rho\leq 1+\eps_b} |U(\rho)|$. Indeed, since $p>1$, there exists $k_b>0$ such that
\begin{equation}
\label{LWP21}
\forall (x,y)\in [0,2b],\quad \left|x\left( b_0^{p-1}-x^{p-1} \right)-y\left(b_0^{p-1}-y^{p-1}\right)\right|\leq k_b|x-y|.
\end{equation}
Chosing $\eps_b$ small enough, and assuming $1-\eps_b\leq \rho\leq 1+\eps_b$, we have

$$ \left(\frac{\sigma}{\tau}\right)^{N-1}\left( \frac{1+\sigma}{1+\tau} \right)^{\alpha-\frac{N-1}{2}}\leq 2$$

in the integrand defining $\Psi$ in \eqref{LWP11}. Thus if $U,V\in Y_b$,
\begin{align*}
 \left|\Psi(U)(\rho)-\Psi(V)(\rho)\right|& \leq 2k_b \int_{\rho}^1 \int_{\tau}^1 (1-\sigma)^{\alpha-\frac{N-1}{2}}\|U-V\|_{\infty}d\sigma (1-\tau)^{\frac{N-3}{2}-\alpha}d\tau\\
 &\leq \frac{4 k_b}{2\alpha-N+3}\int_{\rho}^1 \|U-V\|_{\infty}d\tau\\
 &\leq \frac{4 k_b}{2\alpha-N+3}\eps_b\|U-V\|_{\infty}.
\end{align*}
Taking $\eps_b\leq\frac{2\alpha-N+3}{16k_{b}}$, we deduce \eqref{LWP20}. Note that \eqref{LWP20} with $V=0$ implies $\|\Psi(U)\|_{\infty}\leq \frac{b}{2}$ if $U\in Y_b$, so that
\begin{equation}
 \label{LWP30} \frac{b}{2}\leq b+\Psi(U(\rho))\leq \frac{3b}{2},\quad 1-\eps_b\leq \rho\leq 1+\eps_b,
\end{equation}
for $U\in Y_b$. We have proved that $U\mapsto b+\Psi(U)$ is a contraction on $Y_b$, which yields the local existence and uniqueness of $U(\cdot,b)$.

It remains to treat the case $b=0$. The solution is $U(\rho,0)=0$. Uniqueness can be proved showing that $\Psi$ is a contraction close to $b=0$, in the space
$$Y:=\left\{U\in C^0([1-\eps,1+\eps],\RR),\; \forall \rho\in [1-\eps,1+\eps], \; -b_0/2\leq U(\rho)\leq b_0/2\right\},$$
for small enough $\eps$.

\medskip

\noindent\emph{Step 2. Continuity of the flow.}

We next prove the continuity of the flow close to $\rho=1$,  For this we fix $B\in (0,\infty)$, and notice that by the definition \eqref{LWP21} of $k_b$, we can take $k_b$, and thus $\eps_b$ independent of $b$ for $\frac{1}{2}B\leq b\leq \frac 43B$.

Fixing $b\in \left[\frac{1}{2} B,\frac 43B\right]$, we see by \eqref{LWP30} that $U(\cdot,b)$ is in $Y_{B}$. Thus by \eqref{LWP20},
\begin{align*}
\|U(\cdot,b)-U(\cdot,B)\|_{\infty}&\leq \left\|\Psi(U(\cdot,b))-\Psi(U(\cdot,B))\right\|_{\infty}+|b-B|\\
&\leq \frac{1}{4} \left\|U(\cdot,b)-U(\cdot,B)\right\|+|b-B|.
\end{align*}
Thus
\begin{equation}
\label{bound_bB}
\forall \rho\in [1-\eps_{B},1+\eps_{B}],\quad |U(\rho,b)-U(\rho,B)|\leq \frac 43|b-B|.
\end{equation}
Let $\rho_k\to \rho\in [1-\eps_B,1+\eps_B]$ and $b_k\to B$. Then
$$ U(\rho_k,b_k)-U(\rho,B)=U(\rho_k,b_k)-U(\rho_k,B)+U(\rho_k,B)-U(\rho,B).$$
Using \eqref{bound_bB} and the continuity of $U(\cdot,B)$, we obtain that $U$ is continuous in a neighborhood of $(1,B)$ in $(0,\infty)\times \RR$, for $B>0$ and also, by symmetry, for $B<0$. The case $B=0$ can be treated in a similar fashion.

\medskip

\noindent\emph{Step 3. Extension to $(0,1]$.}

 We use the function $H$ defined by \eqref{2-3-0}.
 Denoting $u(\rho)=U(\rho,b)$, there exists a $\delta>0$ small enough such that $H$, $u$ and $u'$ are finite on $1-\delta\leq\rho\leq1$. Moreover, using \eqref{2-3-0} and \eqref{2-3-1}, and that the minimum of the function $F$ is $F(b_0)=-\frac{b_0^2}{p-1}$, one can verify that
\begin{equation}\label{2-3-5}
-\frac{H^{\prime}(\rho)}{1+\frac{1}{p-1}b_{0}^{2}+H(\rho)}\leq\frac{2(N-1)}{\delta\rho}, \qquad \forall \,\, \rho\leq 1-\delta.
\end{equation}

Hence, by integrating from $\rho=1-\delta$ to the left, we obtain that $H$ and thus $u$ and $u'$ stays finite on $0<\rho\leq 1$.

\end{proof}

\subsection{Shooting method}
\label{SS:SM}
We first define the analog of $\Theta(\rho,c)$ for the function $U(\rho,b)$ defined by Proposition \ref{P:local1}. Let,  for $\rho\in (0,1]$, $b\geq 0$
\begin{equation}
\label{3-0}
W(\rho,b):=\frac{U(\rho,b)}{u_{\infty}(\rho)}-1,\quad
\tilde{R}(\rho, b):=\sqrt{W(\rho, b)^{2}+\rho^{2}W^{\prime}(\rho, b)^{2}}.
\end{equation}
\begin{lem}
\label{L:tTheta}
Assume $N=3$ or $N\geq 4$ and $1+\frac{4}{N-2}<p<1+\frac{4}{N-3}$.
 There exists a continuous function $\tilde{\Theta}:(0,1]\times \left([0,\infty)\setminus \{b_{\infty}\}\right)\to \RR$ such that, for all $(\rho,b)$ in the domain of definition of $\tilde{\Theta}$,
 \begin{equation}
  \label{deftTheta}
  \tan \tilde{\Theta}(\rho,b)=\frac{\rho W'(\rho,b)}{W(\rho,b)}
 \end{equation}
 (with the convention that $\tilde{\Theta}(\rho,b)\in \frac{\pi}{2}+\pi\ZZ$ is $W(\rho,b)=0$) and:
 \begin{gather}
  \label{3-2}
  \forall \rho\in (0,1],\quad \tilde{\Theta}(\rho,0)=\pi,\quad \lim_{b\to\infty} \tilde{\Theta}(1,b)=\frac{\pi}{2},\\
  \label{tTheta1} \forall b\in [0,b_{\infty}),\quad\frac{\pi}{2}<\tilde{\Theta}(1,b)<\frac{3\pi}{2} \text{ and }  \forall \rho\in (0,1], \; \tilde{\Theta}(\rho,b)>\frac{\pi}{2},\\
  \label{tTheta2}
  \forall b\in (b_{\infty},+\infty), \quad -\frac{\pi}{2}<\tilde{\Theta}(1,b)<\frac{\pi}{2}\text{ and }
  \forall \rho\in (0,1],\quad \tilde{\Theta}(\rho,b)>-\frac{\pi}{2}.
\end{gather}
\end{lem}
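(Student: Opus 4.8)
The plan is to mimic the construction of $\Theta$ in Lemma \ref{L:Theta}, the two new features being that the data is now prescribed at the boundary point $\rho=1$ and that the parameter $b$ ranges over the \emph{disconnected} set $[0,b_\infty)\cup(b_\infty,\infty)$. First I would check that $\tilde R(\rho,b)>0$ on the whole domain: if $W(\rho_0,b)=W'(\rho_0,b)=0$ at some $\rho_0\in(0,1]$, then $U(\cdot,b)$ and $u_\infty$ agree to first order at a point where \eqref{PDE} is a regular ODE, so by uniqueness $U(\cdot,b)\equiv u_\infty$; evaluating at $\rho=1$ forces $b=u_\infty(1)=b_\infty$, which is excluded (and at $\rho=1$ the claim is transparent since $W(1,b)=\frac{b}{b_\infty}-1$ vanishes only at $b=b_\infty$). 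Hence $\overline{\tilde\Theta}:=\arctan(\rho W'/W)\in\RR/\pi\ZZ$, with the value $\frac\pi2$ when $W=0$, is well defined and, by the continuity of $U,U'$ on $\Omega$ from Proposition \ref{P:local1}, continuous. Since each of the two sets $(0,1]\times[0,b_\infty)$ and $(0,1]\times(b_\infty,\infty)$ is simply connected, the standard lifting lemma produces a continuous real-valued $\tilde\Theta$ on each, once a value is fixed.

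On the first component I would normalize so that $\tilde\Theta(\cdot,0)\equiv\pi$: this is forced because $U(\cdot,0)\equiv 0$, whence $(W,\rho W')\equiv(-1,0)$ for all $\rho$. As $W(1,b)=\frac{b}{b_\infty}-1<0$ for $b\in[0,b_\infty)$, the curve $b\mapsto(W(1,b),W'(1,b))$ stays in the open left half-plane, so the lift starting from $\pi$ remains in $(\frac\pi2,\frac{3\pi}2)$, which is the first half of \eqref{tTheta1}. On the second component, where $W(1,b)>0$, I would normalize $\tilde\Theta(1,b)$ to be the principal argument of $W(1,b)+i\rho W'(1,b)$, so that $\tilde\Theta(1,b)\in(-\frac\pi2,\frac\pi2)$, the first half of \eqref{tTheta2}. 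For the limit in \eqref{3-2} I would use the explicit value of $U'(1,b)$ from Proposition \ref{P:local1}: with $u_\infty(1)=b_\infty$ and $u_\infty'(1)=-\alpha b_\infty$ a direct computation gives
$$\frac{\rho W'(1,b)}{W(1,b)}=\frac{U'(1,b)+\alpha b}{b-b_\infty},$$
whose numerator grows like $b^p/(2\alpha-N+3)$, with $2\alpha-N+3>0$ under $p<1+\frac4{N-3}$, while the denominator grows linearly; hence $\tan\tilde\Theta(1,b)\to+\infty$ with $\tilde\Theta(1,b)\in(-\frac\pi2,\frac\pi2)$, forcing $\tilde\Theta(1,b)\to\frac\pi2$.

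The remaining and main point is the one-sided barrier statements, namely $\tilde\Theta(\rho,b)>\frac\pi2$ for $b\in[0,b_\infty)$ and $\tilde\Theta(\rho,b)>-\frac\pi2$ for $b\in(b_\infty,\infty)$. Exactly as in \eqref{2-2-22}, but with $v$ replaced by $U/u_\infty=W+1$, the function $\tilde\Theta$ solves a Riccati-type equation whose $A$-coefficient equals $1$; consequently, whenever $W(\rho,b)=0$, i.e. $\cos\tilde\Theta=0$, the $B$- and $C$-terms drop out and $\tilde\Theta'(\rho,b)=-\frac1\rho<0$, \emph{independently} of the value of $U/u_\infty$. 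This makes the levels $\frac\pi2+\pi\ZZ$ one-way barriers. Starting from $\tilde\Theta(1,b)>\frac\pi2$ and decreasing $\rho$, if $\rho_0$ were the largest point below $1$ with $\tilde\Theta(\rho_0,b)=\frac\pi2$, then $\tilde\Theta>\frac\pi2$ on $(\rho_0,1]$ would force $\tilde\Theta'(\rho_0,b)\ge 0$, contradicting $\tilde\Theta'(\rho_0,b)=-1/\rho_0<0$; hence no crossing occurs. The case $b>b_\infty$ is identical with $-\frac\pi2$ in place of $\frac\pi2$.

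I expect the delicate part to be the bookkeeping rather than any single estimate: one must keep the two lifts consistent across the disconnected parameter set, and verify that the barrier identity $\tilde\Theta'=-1/\rho$ at $\cos\tilde\Theta=0$ holds regardless of the sign or size of $U$ (so that, unlike the full analysis of \eqref{2-2-22}, no control of the $C$-coefficient is needed). Once these two ingredients are secured, the monotonicity/barrier argument, together with the elementary half-plane computation fixing $\tilde\Theta(1,b)$ and its limit, closes the proof.
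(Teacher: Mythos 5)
Your proposal is correct and follows essentially the same route as the paper: positivity of $\tilde{R}$ via ODE uniqueness (with $\rho=1$ handled separately through $W(1,b)=\frac{b}{b_\infty}-1$), lifting on the two simply connected components, the computation $\tan\tilde{\Theta}(1,b)=\frac{\alpha b+U'(1,b)}{b-b_\infty}\to+\infty$ using the formula for $U'(1,b)$ from Proposition \ref{P:local1} (your expression is an unsimplified form of \eqref{W'(1)}), and the Riccati barrier from \eqref{2-2-22} giving $\tilde{\Theta}'=-1/\rho<0$ at $\tilde{\Theta}\in\frac{\pi}{2}+\pi\ZZ$. The only cosmetic difference is your normalization of the lift on $(b_\infty,\infty)$ by the principal argument at $\rho=1$, whereas the paper prescribes $\lim_{b\to\infty}\tilde{\Theta}(1,b)=\frac{\pi}{2}$; since $W(1,b)>0$ keeps $\tilde{\Theta}(1,\cdot)$ in a single interval $\left(k\pi-\frac{\pi}{2},k\pi+\frac{\pi}{2}\right)$, the two choices are equivalent.
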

\begin{proof}
By uniqueness of solutions of ODEs,
$$\exists \rho\in (0,1],\quad \left(U(\rho,b),U'(\rho,b)\right)=\left(u_{\infty}(\rho),u'_{\infty}(\rho)\right) \iff b=b_{\infty},$$
which implies that $\tilde{R}(\rho,b)>0$ if $b\neq b_{\infty}$, $\rho\in (0,1]$.
By the same argument as in the proof of Lemma \ref{L:Theta}, we can define $\tilde{\Theta}$ satisfying \eqref{deftTheta} in each of the simply connected regions $\{(\rho,b)\big|\,0<\rho\leq1, \, 0\leq b<b_{\infty}\}$ and $\{(\rho,b)\big|\,0<\rho\leq1, \, b_{\infty}<b<+\infty\}$ once we specify its value at some point of each of these two domains. We have $W'(1,0)/W(1,0)=0$ and (by the formula giving $U'(1,b)$ in Proposition \ref{P:local1})
\begin{equation}
\label{W'(1)}
\frac{W'(1,b)}{W(1,b)}=-\frac{b}{N-3-2\alpha} \frac{b_{\infty}^{p-1}-b^{p-1}}{b_{\infty}-b}\underset{b\to +\infty}{\longrightarrow} +\infty.
\end{equation}
We can thus set:
\begin{equation*}
  \tilde{\Theta}(1,0)=\pi, \quad \lim_{b\rightarrow+\infty}\tilde{\Theta}(1,b)=\frac{\pi}{2},
\end{equation*}
Since $W'(\rho,0)=0$, we obtain that $\tilde{\Theta}(\rho,0)=\pi$ for all $\rho\in (0,1]$. Hence \eqref{3-2}.

Since $W(1,b)=\frac{b}{b_{\infty}}-1\neq 0$ if $0< b<b_{\infty}$, and $\tilde{\Theta}(1,0)=\pi$, we obtain
$\frac{\pi}{2}<\tilde{\Theta}(1,b)<\frac{3\pi}{2}$ if $0\leq b<b_{\infty}$. Furthermore $\tilde{\Theta}'(\rho,b)<0$ when $\tilde{\Theta}(\rho,b)=\frac{\pi}{2}$ (see formula \eqref{2-2-22}, which is also valid for $\tilde{\Theta}$), hence $\tilde{\Theta}(\rho,b)>\frac{\pi}{2}$ for any $0<\rho\leq1$ and $0\leq b<b_{\infty}$. This proves\eqref{tTheta1}.

Using \eqref{W'(1)} and since $\lim_{b\to\infty}\tilde{\Theta}(1,b)=\frac{\pi}{2}$, we see that $\tilde{\Theta}(1,b)<\frac{\pi}{2}$ for $b$ large. Since $W(1,b)\neq 0$ if $b>b_{\infty}$, we deduce $-\frac{\pi}{2}<\tilde{\Theta}(1,b)<\frac{\pi}{2}$ if $b>b_{\infty}$. Using that $\tilde{\Theta}'(\rho,b)<0$ when $\tilde{\Theta}(\rho,b)=-\frac{\pi}{2}$, we also obtain $\tilde{\Theta}(\rho,b)>-\frac{\pi}{2}$ for any $0<\rho\leq1$ and $b>b_{\infty}$, which concludes the proof of \eqref{tTheta2} and of the lemma.
\end{proof}

With exactly the same proof as in Lemma \ref{L:zeros}, if $0<b\neq b_{\infty}$, and $1<\rho_1<\rho_2$, the number of zeros of $W(\cdot,b)$ on $[\rho_1,\rho_2)$ is given by
$$\ent{\frac{\tilde{\Theta}(\rho_1,b)}{\pi}-\frac{1}{2}}-\ent{\frac{\tilde{\Theta}(\rho_2,b)}{\pi}-\frac{1}{2}}.$$

For arbitrary $0<\rho_{0}<1$ close to $1$ (to be determined later), we define a map
\begin{equation}\label{3-4}
\Phi : \, \overline{\mathbb{R}_{+}}=\{c | \, c \geq 0\} \rightarrow \mathbb{R}_{+}^{2}=\left\{(x, y) \in \mathbb{R}^{2} | \, y>0\right\}, \quad \Phi(c):=\left(\Theta\left(\rho_{0}, c\right), R\left(\rho_{0}, c\right)\right),
\end{equation}
and then a map
\begin{equation}\label{3-5}
\Gamma : \, c \in\mathbb{R}\rightarrow \mathbb{R}_{+}^{2}, \quad \Gamma(c):=\left\{\begin{array}{ll}{\Phi(c),} & {\text{ if } \, 0<c<+\infty \, \text { (segment 1)}}, \\ {(\pi, 1-c),} & {\text { if } \, -\infty<c\leq0 \, \text { (segment 2)}}.\end{array}\right.
\end{equation}
If $\gamma$ is a continuous curve from an interval $I$ of $\RR$ to $\RR^2$, we will denote by $\Image[\gamma]=\{\gamma(c),\;c\in I\}$ its image. We let
$$\Image[\Gamma]=\Image_1[\Gamma]\cup\Image_2[\Gamma],$$
where $\Image_1[\Gamma]$ is the subset of $\Image[\Gamma]$ corresponding to segment $1$ and $\Image_2[\Gamma]$ the subset of $\Image[\Gamma]$ corresponding to segment $2$.

It follows from definition \eqref{3-5} and uniqueness of solutions of ODEs that segment 1 and 2 of $\Gamma$ cannot intersect with themselves at any points. Suppose that there exist $c_{1}\in(0,+\infty)$ and $c_{2}\in(-\infty,0]$ such that $\Phi(c_{1})=(\pi,1-c_{2})$. Then $\Theta(\rho,c_1)=\pi$, which implies $w'(\rho,c_1)=0$. As a consequence $1-c_2=R(\rho,c_1)=\left|w(\rho,c_1)\right|$, which yields $\left|w(\rho,c_1)\right|\geq 1$. However by \eqref{2-2-29},
$$-1<w(\rho,c)<\left( \frac{p+1}{2} \right)^{\frac{1}{p-1}}-1<1,$$
which gives a contradiction.

Therefore $\Image_1(\Gamma)\cap \Image_2(\Gamma)=\emptyset$, and $\Gamma$ is a continuous, connected and simple curve in $\mathbb{R}^{2}_{+}$.

Now, for every $n\geq1$, we define a map
\begin{equation}\label{3-6}
\Psi_{n} : \, b\in[0,b_{\infty}) \rightarrow \mathbb{R}_{+}^{2}, \quad \Psi_{n}(b):=\left(\tilde{\Theta}\left(\rho_{0}, b\right)-2n\pi, \tilde{R}\left(\rho_{0}, b\right)\right),
\end{equation}
for the same $\rho_{0}\in(0,1)$ close to $1$ (see Figure \ref{Fig1}).
We have the following proposition.
\begin{prop}\label{prop1}
If $\rho_0$ is close enough to $1$, then for any positive integer $n\geq1$, there exist $0<B_{n}<b_{\infty}$ and $C_{n}>0$ such that
\begin{equation}\label{p1-0}
  \Psi_{n}(B_{n})=\Phi(C_{n}).
\end{equation}
Furthermore, equation \eqref{PDE} possesses a regular solution $u$ defined on $0\leq\rho\leq1$ such that $u(1)=B_{n}$, $u(0)=C_{n}$ and $\frac{u}{u_{\infty}}-1$ has exactly $2n$ zeros.
\end{prop}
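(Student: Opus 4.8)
The plan is to prove \eqref{p1-0} by showing that the planar curves $\Image_1[\Gamma]=\Image[\Phi]$ and $\Image[\Psi_n]$ must cross, and then to recover $u$ by gluing $u(\cdot,C_n)$ and $U(\cdot,B_n)$ along their common Cauchy data at $\rho_0$. First I would record the limiting behaviour of the two curves in the open half-plane $\mathbb{R}_+^2$. By Proposition \ref{prop0} and Corollary \ref{Cor0}, as $c\to+\infty$ one has $\Theta(\rho_0,c)\to-\infty$ and $R(\rho_0,c)\to0$, while $\Phi(c)\to(\pi,1)$ as $c\to0^+$; hence $\Gamma$ is a proper simple arc whose two ends escape to $(\pi,+\infty)$ (along segment $2$) and towards $\{x\to-\infty,\ y\to0^+\}$ (along segment $1$). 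On the other side $\Psi_n(0)=(\pi(1-2n),1)$, and since $U(\cdot,b)\to u_{\infty}$ as $b\to b_{\infty}^-$ (so $\tilde R(\rho_0,b)\to0$), $\Psi_n(b)$ approaches the boundary line $\{y=0\}$ at a finite abscissa.

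Then comes the topological step. Compactify the closed half-plane to a disk $\bar D$ by adding one point $\infty$; then $\mathbb{R}_+^2$ is the open disk and $\hat\Gamma:=\Gamma\cup\{\infty\}$ is a Jordan curve meeting $\partial\bar D$ only at $\infty$, so it separates $\bar D$ into two components, exactly one of which, call it $A$, has the boundary arc $\{y=0\}$ in its closure. By the endpoint behaviour above, $\Psi_n(b)\in A$ for $b$ close to $b_{\infty}$; it therefore suffices to show $\Psi_n(0)\in B$, the other component, for then $\Psi_n$ runs from $B$ to $A$ and must meet $\hat\Gamma$, hence $\Gamma$, at some $b=B_n\in(0,b_{\infty})$. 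To locate $\Psi_n(0)$ I would compute the mod-$2$ intersection number of the vertical segment $S$ from $(\pi(1-2n),1)$ down to $(\pi(1-2n),0)$ with $\hat\Gamma$. Segment $2$ lies on $\{x=\pi\}$ and is disjoint from $S$, while $S$ meets segment $1$ exactly where $\Theta(\rho_0,c)=\pi(1-2n)$; since $\pi(1-2n)$ is an odd multiple of $\pi$, at each such point $w'(\rho_0,c)=0$, whence $R(\rho_0,c)=|w(\rho_0,c)|\in(0,1)$ by \eqref{2-2-29}, so every crossing lies in the interior of $S$. As $\Theta(\rho_0,\cdot)$ runs from $\pi>\pi(1-2n)$ to $-\infty<\pi(1-2n)$ and is analytic in $c$, the level $\pi(1-2n)$ is attained finitely often and with an odd net number of sign changes; hence the mod-$2$ intersection number is $1$, so $\Psi_n(0)$ and $(\pi(1-2n),0)$ are separated, i.e. $\Psi_n(0)\in B$. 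This separation/parity count is the main obstacle and the technical heart of the argument.

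It remains to ensure the crossing lies on $\Image_1[\Gamma]$ rather than on segment $2$, and this is exactly where I would use that $\rho_0$ is close to $1$. Since $\tilde R(\rho_0,b)\to0$ as $b\to b_{\infty}$ and $\tilde R(\rho_0,0)=1$, the set $\{b:\tilde R(\rho_0,b)\geq1\}$ is a compact subset $[0,b_1]$ of $[0,b_{\infty})$; for $b\in(b_1,b_{\infty})$ the point $\Psi_n(b)$ has ordinate $<1$ and so cannot lie on segment $2$, while on the compact set $[0,b_1]$ the uniform continuity of $\tilde\Theta$ (Lemma \ref{L:tTheta}) together with $\tilde\Theta(1,b)<\tfrac{3\pi}{2}$ gives, for $\rho_0$ close enough to $1$, the bound $\tilde\Theta(\rho_0,b)<2\pi$, so that the abscissa $\tilde\Theta(\rho_0,b)-2n\pi<2\pi-2n\pi\leq0<\pi$ avoids $\{x=\pi\}$. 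Thus $\Psi_n(B_n)=\Phi(C_n)$ with $C_n>0$ and $0<B_n<b_{\infty}$, which is \eqref{p1-0}.

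Finally I would glue and count zeros. Equality of the first coordinates in \eqref{p1-0} means $\tilde\Theta(\rho_0,B_n)$ and $\Theta(\rho_0,C_n)$ differ by $2n\pi$, hence share the same sine and cosine; combined with $\tilde R(\rho_0,B_n)=R(\rho_0,C_n)$ this forces $(W,\rho_0 W')(\rho_0,B_n)=(w,\rho_0 w')(\rho_0,C_n)$, i.e. $U(\cdot,B_n)$ and $u(\cdot,C_n)$ carry the same Cauchy data at the regular point $\rho_0$. By uniqueness for \eqref{PDE} they coincide, producing a single analytic positive solution $u$ on $[0,1]$ with $u(0)=C_n$ and $u(1)=B_n$. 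Defining the continuous angle $\Theta_{\mathrm{tot}}$ to equal $\Theta(\cdot,C_n)$ on $(0,\rho_0]$ and $\tilde\Theta(\cdot,B_n)-2n\pi$ on $[\rho_0,1)$ (these agree at $\rho_0$ and both obey \eqref{2-2-22}), one has $\Theta_{\mathrm{tot}}(0^+)=\pi$; and since $B_n\in(0,b_{\infty})$ gives, via \eqref{W'(1)}, $W(1,B_n)<0$ and $W'(1,B_n)<0$, hence $\tilde\Theta(1,B_n)\in(\pi,\tfrac{3\pi}{2})$, we get $\Theta_{\mathrm{tot}}(1^-)\in(\pi(1-2n),\tfrac{3\pi}{2}-2n\pi)$. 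The analogue of Lemma \ref{L:zeros} applied to $\Theta_{\mathrm{tot}}$ then yields $\lfloor\tfrac12\rfloor-\lfloor\tfrac{\Theta_{\mathrm{tot}}(1^-)}{\pi}-\tfrac12\rfloor=0-(-2n)=2n$ zeros of $u/u_{\infty}-1$ on $(0,1)$, as claimed.
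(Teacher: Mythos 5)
Your overall strategy is the same as the paper's: separate the half-plane by the simple proper arc $\Gamma$, show that $\Psi_n$ starts on the side away from $\{y=0\}$ and ends on the side containing $\{y=0\}$, rule out segment $2$, glue $u(\cdot,C_n)$ and $U(\cdot,B_n)$ via matching Cauchy data at $\rho_0$, and count zeros with the lifted angle. Your gluing step and your zero count are correct and essentially identical to the paper's (you even get the sharper $\tilde{\Theta}(1,B_n)\in(\pi,\tfrac{3\pi}{2})$ via \eqref{W'(1)}, where the paper only needs $(\tfrac{\pi}{2},\tfrac{3\pi}{2})$ from \eqref{tTheta1}). The genuinely different step is how you locate $\Psi_n(0)$, and that is where the problems are.

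First, the mod-$2$ parity count along the downward segment $S$ is both under-justified and unnecessary. It relies on analyticity of $c\mapsto\Theta(\rho_0,c)$, which is nowhere established (Proposition \ref{P:local0} gives only continuity of $u,u'$ in $(\rho,c)$; analytic dependence on $c$ through the singular point $\rho=0$ would need a separate argument), and even granting analyticity, parity of sign changes controls separation only after dealing with even-order (tangential) contacts of $\Gamma$ with $S$, which you gloss over: at such a point $S$ meets $\hat\Gamma$ without changing sides, so "odd number of sign changes $\Rightarrow$ endpoints separated" needs the local finite-order structure spelled out. The paper avoids all of this: your own computation — every point of $\Gamma$ on the line $\{x=\pi(1-2n)\}$ has ordinate $|w(\rho_0,c)|\in(0,1)$ by \eqref{2-2-29} — already shows that the \emph{upward} ray $\{\pi(1-2n)\}\times[1,+\infty)$ is entirely disjoint from $\Gamma$; since for large $M$ the region of ordinate $\geq M$ over the abscissa window between $\tfrac{\pi}{2}-2n\pi$ and $\pi$ misses the (compact) relevant portion of segment $1$ as well as segment $2$, this connected ray lies in the component not touching $\{y=0\}$, and $\Psi_n(0)\in B$ follows with no parity argument at all. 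Second, "$\Psi_n(b)$ approaches $\{y=0\}$ at a finite abscissa, hence lies in $A$" is not a proof: you need a positive clearance between $\Gamma$ and the relevant part of the line. Since $\Theta(\rho_0,c)\to-\infty$ (Proposition \ref{prop0}), the points of segment $1$ with abscissa $>\tfrac{\pi}{2}-2n\pi$ correspond to a compact set of $c$, so their ordinates admit a positive lower bound $m$; choosing $b$ near $b_\infty$ with $\tilde R(\rho_0,b)<\min(m,1)$ and dropping a vertical segment (its abscissa stays $>\tfrac{\pi}{2}-2n\pi$ because $\tilde\Theta(\rho_0,b)>\tfrac{\pi}{2}$) gives $\Psi_n(b)\in A$; this is exactly the paper's $m_n:=\inf_{0\leq c\leq\bar c_n}R(\rho_0,c)$ device, which your sketch omits. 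Third, your choice of $\rho_0$ is circular: $b_1$ is defined from $\tilde R(\rho_0,\cdot)$, yet you then take $\rho_0$ close to $1$ depending on $[0,b_1]$. This is fixable — the convergence $U(\cdot,b)\to u_\infty$, with derivatives, is uniform on a fixed $[\rho_*,1]$ as $b\to b_\infty$, so $b_1$ can be taken uniform in $\rho_0\in[\rho_*,1]$ — or one can simply use the paper's bound \eqref{boundW}, $|W(\rho_0,b)|<1$ for all $b\in(0,b_\infty]$, which kills segment $2$ in one stroke (as in \eqref{p1-1}). With these three repairs your argument goes through, but as written each of them is a genuine gap.
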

\begin{proof}
\noindent\emph{Step 1. Choice of $\rho_0$.}
In this step, we prove that there exists $\rho_0\in (0,1)$, close to $1$, such that
\begin{equation}
 \label{boundW}
 \forall \, b\in (0,b_{\infty}], \quad -1<W(\rho_0,b)<1
\end{equation}
and
\begin{equation}\label{p1-1}
  \forall \,\, n\geq1, \qquad \Image[\Psi_{n}]\cap \Image_2[\Gamma]=\emptyset
\end{equation}
(recall that $\Image_2[\Gamma]=\{\pi\}\times [1,+\infty)$ is the subset of the image of $\Gamma$ corresponding to segment 2).

We start by showing that there exists $\rho_0\in (0,1)$ that satisfies \eqref{boundW}.
Let $b_{\ast}:=\frac{1}{2}b_{\infty}>0$. Note that $U(1,b)=b\geq b_{\ast}>0$ for all $b\in [b_{\ast},b_{\infty}]$. By continuity of $U$, there exists a $\rho_{\ast}\in (0,1)$, close to $1$ such that
$$\forall b\in[b_{\ast},b_{\infty}),\quad \forall \rho\in[\rho_{\ast},1],\quad U(\rho,b)>0 .$$
Since $b_{\ast}<b_{\infty}<b_{0}$ by the assumption $p<1+\frac{4}{N-3}$, there is a $\rho_{0}\in[\rho_{\ast},1)$ such that $U(\rho,b)<b_{0}$ for all $\rho\in[\rho_{0},1]$ and $b\in[0,b_{\ast}]$. Let $b\in (0,b_0)$. By the formula giving $U'(1,b)$ in Proposition \ref{P:local1}, we see that $U'(1,b)<0$. Furthermore by the equation \eqref{PDE} satisfied by $U$,
$$U'(\rho,b)=0\text{ and }0<U(\rho,b)<b_0\Longrightarrow U''(\rho,b)>0.$$
We thus conclude
\begin{equation*}
 \forall b\in (0,b_*],\quad \forall \rho\in [\rho_0,1],\quad U'(\rho,b)<0\text{ and }U(\rho,b)\geq b.
\end{equation*}
As a conclusion, $U(\rho,b)>0$ for any $\rho \in [\rho_0,1]$ and any $b\in (0,b_{\infty}]$, which gives the lower bound for $W$ in \eqref{boundW}. To obtain the upper bound, it suffices to notice that $W(1,b)=b/b_{\infty}-1\leq 0$ for $b\in [0,b_{\infty}]$. By continuity of $W$, taking a $\rho_0$ closer to $1$ if necessary, we obtain \eqref{boundW}.

We next prove \eqref{p1-1}, which is a direct consequence of \eqref{boundW}. We argue by contradiction. Let $(X,Y)\in \Image[\Psi_{n}]\cap \Image_2[\Gamma]$. Then there exists $b\in(0,b_{\infty})$ such that $X=\tilde{\Theta}(\rho_0,b)-2n\pi=\pi$ and $Y=\tilde{R}(\rho_0,b)\geq 1$. Taking the tangent of $X$, we see that $W'(\rho_0,b)=0$. Thus by \eqref{boundW}, $\tilde{R}(\rho_0,b)=|W(\rho_0,b)|<1$, a contradiction.

\medskip

\emph{Step 2. Topological argument.}
Since $\lim_{c\rightarrow+\infty}\Theta(\rho_{0},c)=-\infty$ by Proposition \ref{prop0} and $\Theta(\rho_0,0)=\pi$ by the construction of $\Theta$, there exists a $\bar{c}_{n}>0$ such that $\Theta(\rho_{0},\bar{c}_{n})=\frac{\pi}{2}-2n\pi$ and $\Theta(\rho_{0},c)>\frac{\pi}{2}-2n\pi$ for any $c<\bar{c}_{n}$. Define
$$Z_{n}:=\left\{(x,y)\in\mathbb{R}^{2}_{+}\,\Big |\,x>\frac{\pi}{2}-2n\pi\right\},\quad
\Gamma_{n}:=\Gamma_{\restriction (-\infty,\bar{c}_n]}.$$
Then the curve $\Gamma_{n}$ separates the domain $Z_{n}$ into two open regions: $Z_{n}\setminus \Image[\Gamma_{n}]=X_{n}\cup Y_{n}$ with $X_{n}\cap Y_{n}=\emptyset$. We denote by $X_n$ the region containing $\left(\frac{\pi}{2}-2n\pi,\pi\right)\times [M,+\infty)$ for some large $M$. Thus $Y_n$ is the region with $\partial Y_{n}$ containing $\left(\frac{\pi}{2}-2n\pi,+\infty\right)\times \{0\}$.
We will prove:
\begin{gather}
\label{Step2_1}
\Image[\Psi_{n}]\subset Z_{n},\\
\label{Step2_2}
X_n\cap \Image[\Psi_n]\neq \emptyset,\\
\label{Step2_3}
Y_n\cap \Image[\Psi_n]\neq \emptyset.
\end{gather}
The fact that $\Image[\Psi_n]\subset Z_n$ means that
$$\forall b\in [0,b_{\infty}), \quad \tilde{\Theta}(\rho_0,b)>\frac{\pi}{2},$$
which we have proved when constructing $\tilde{\Theta}$. Hence \eqref{Step2_1}.

To prove \eqref{Step2_2}, we first prove by contradiction that $\Big(\big\{\pi-2n\pi\}\times [1,+\infty)\Big)\cap \Image[\Gamma]=\emptyset$. Indeed, assume that there exists $c_1>0$ such that
$$\Theta(\rho_{0},c_1)=\pi-2n\pi \,\,\, \text{ and } \,\,\, R(\rho_0,c_1)\geq 1.$$
This implies $w'(\rho_0,c_1)=0$ and thus $|w(\rho_0,c_1)|\geq 1$, contradicting the bound \eqref{2-2-29} on $u$. This shows that the curve $\big\{\pi-2n\pi\}\times [1,+\infty)$ is included in $X_n$ or $Y_n$. Since it has non-empty intersection with $X_n$, it must be included in $X_n$. In particular
$$ P_n=(\pi-2n\pi,1)\in X_n.$$
This yields \eqref{Step2_2} since $P_n=\left(\tilde{\Theta}(\rho_0,0)-2n\pi,\tilde{R}(\rho_0,0)\right)\in\Image[\Psi_n]$.

We next prove \eqref{Step2_3}.
Define
$$m_{n}:=\inf_{0\leq c\leq \bar{c}_{n}}R(\rho_{0},c)>0.$$
Since $\tilde{R}(\rho,b_{\infty})=0$ for all $\rho\in (0,1)$, we have $\lim_{b\rightarrow b_{\infty}}\tilde{R}(\rho_{0},b)=0$, and thus there must exist a $\bar{b}_{n}\in(0,b_{\infty})$ such that
$$\tilde{R}(\rho_0,\bar{b}_n)<m_n.$$

As a consequence, the point $Q_{n}:=(\tilde{\Theta}(\rho_{0},\bar{b}_{n})-2n\pi,\tilde{R}(\rho_{0},\bar{b}_{n}))$ is below the curve $\Gamma_n$, which implies $Q_n\in Y_n$. Since
$Q_n\in \Image[\Psi_{n}]$ we are done.

\begin{figure}[htb]
\centering
\includegraphics[width=0.75\textwidth]{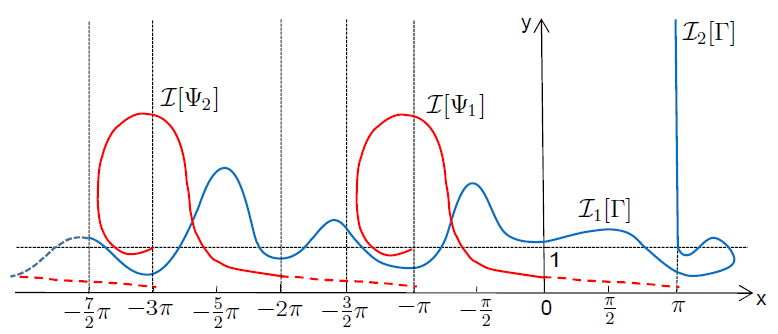}
\caption{Curves for $\Image_{1}[\Gamma]$, $\Image_{2}[\Gamma]$ and $\Image[\Psi_{n}]$ ($n\geq1$).}
\label{Fig1}
\end{figure}

\emph{Step 3. Conclusion of the proof.} Since by Step 2, $\Image[\Psi_n]$ has a point in $X_n$ and a point in $Y_n$, it must intersect $\Image[\Gamma_n]$.
Therefore, $\Image[\Psi_{n}]\cap \Image[\Gamma]\neq\emptyset$, hence by \eqref{p1-1}, there exist $0<B_{n}<b_{\infty}$ and $C_{n}>0$ such that $\Psi_{n}(B_{n})=\Phi(C_{n})$. As a consequence, the functions $u(\rho,C_{n})$ and $U(\rho,B_{n})$ and their derivatives match at $\rho_{0}$. Thus $u(\rho,C_n)=U(\rho,B_n)$ for all $\rho\in(0,1)$, which gives a solution $u$ to \eqref{PDE} defined and regular on the whole interval $0\leq\rho\leq1$. The number of zeros of $\frac{u}{u_{\infty}}-1$ is even and is counted by
\begin{equation}\label{p1-2}
\ent{\frac{\Theta(0,C_n)}{\pi}-\frac{1}{2}}-
\ent{\frac{\Theta(\rho_0,C_n)}{\pi}-\frac{1}{2}}+\ent{\frac{\tilde{\Theta}(\rho_0,B_n)}{\pi}-\frac{1}{2}}-\ent{\frac{\tilde{\Theta}(1,B_n)}{\pi}-\frac{1}{2}}\\
\end{equation}
where $\ent{x}$ denotes the integer part of $x$. Since $\Theta(0,C_n)=\pi$, the contribution of the first term is $0$. Using that $\Psi_n(B_n)=\Phi(C_n)$ we obtain that $\tilde{\Theta}(\rho_0,B_n)-2n\pi=\Theta(\rho_{0},C_n)$, and thus the contribution of the sum of the second and third terms is $2n$. Finally by \eqref{tTheta1}, $\frac{\pi}{2}<\tilde{\Theta}(1,B_n)<\frac{3\pi}{2}$, and thus the contribution of the last term is $0$. Hence the number of zeros of $\frac{u}{u_{\infty}}-1$ is $2n$, as announced.
\end{proof}

Next, for every $n\geq0$, we define a map
\begin{equation}\label{3-7}
\tilde{\Psi}_{n} : \, b\in(b_{\infty},+\infty) \rightarrow \mathbb{R}_{+}^{2}, \quad \tilde{\Psi}_{n}(b):=\left(\tilde{\Theta}\left(\rho_{0}, b\right)-2n\pi, \tilde{R}\left(\rho_{0}, b\right)\right),
\end{equation}
where $\rho_{0}\in(0,1)$ is close to $1$ (to be determined later).

The following proposition concerns the case where the number of intersection $u$ and $u_{\infty}$ is odd.

\begin{prop}\label{prop2}
If $\rho_0$ is close enough to $1$, then for any non-negative integer $n\geq0$, there exist $b_{\infty}<\tilde{B}_{n}<+\infty$ and $\tilde{C}_{n}>0$ such that
\begin{equation}\label{p2-0}
  \tilde{\Psi}_{n}(\tilde{B}_{n})=\Phi(\tilde{C}_{n}).
\end{equation}
Furthermore, equation \eqref{PDE} possesses a regular solution $u$ defined on $0\leq\rho\leq1$ such that $u(1)=\tilde{B}_{n}$, $u(0)=\tilde{C}_{n}$ and $\frac{u}{u_{\infty}}-1$ has exactly $2n+1$ zeros.
\end{prop}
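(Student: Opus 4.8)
\emph{Overall strategy.} The plan is to prove Proposition \ref{prop2} by the same topological shooting argument as Proposition \ref{prop1}, now applied to the map $\tilde\Psi_n$ of \eqref{3-7} on the interval $(b_\infty,+\infty)$ rather than to $\Psi_n$ on $[0,b_\infty)$. I fix $\rho_0<1$ close to $1$. By \eqref{tTheta2}, for $b>b_\infty$ one has $\tilde\Theta(\rho,b)>-\frac\pi2$ on $(0,1]$ and $\tilde\Theta(1,b)\in(-\frac\pi2,\frac\pi2)$; accordingly I replace the strip $Z_n$ of Proposition \ref{prop1} by $\tilde Z_n=\{(x,y)\in\mathbb{R}^2_+\,|\,x>-\frac\pi2-2n\pi\}$, let $\bar c_n$ be the first $c$ with $\Theta(\rho_0,c)=-\frac\pi2-2n\pi$ (which exists since $\Theta(\rho_0,0)=\pi$ and \eqref{2-2-0} holds), set $\Gamma_n=\Gamma_{\restriction(-\infty,\bar c_n]}$, and split $\tilde Z_n\setminus\Image[\Gamma_n]=\tilde X_n\cup\tilde Y_n$ with $\tilde X_n$ the upper-left region and $\tilde Y_n$ the region whose boundary meets the $x$-axis. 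Since $\tilde\Theta(\rho_0,b)>-\frac\pi2$ we have $\Image[\tilde\Psi_n]\subset\tilde Z_n$, so it suffices to exhibit a point of $\Image[\tilde\Psi_n]$ in each of $\tilde X_n$ and $\tilde Y_n$, while ensuring that the resulting crossing of $\Gamma_n$ occurs on segment $1$ (the genuine solutions $\Phi(c)$) and not on segment $2$.

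\emph{The point in $\tilde Y_n$.} I let $b\to b_\infty^+$. Then $U(\cdot,b)\to u_\infty$ uniformly on $[\rho_0,1]$, so $\tilde R(\rho_0,b)\to0$; with $m_n=\inf_{0\le c\le\bar c_n}R(\rho_0,c)>0$, for $b$ near $b_\infty$ the point $\tilde\Psi_n(b)$ has second coordinate $<m_n$ and, by \eqref{tTheta2}, first coordinate $>-\frac\pi2-2n\pi$, hence lies in $\tilde Y_n$.

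\emph{The point in $\tilde X_n$ (the crux).} This replaces the explicit point $\Psi_n(0)\in X_n$ available in Proposition \ref{prop1}, and requires the $b\to+\infty$ asymptotics of $\tilde\Psi_n$. First, $\tilde R(\rho_0,b)\to+\infty$: evaluating the Lyapunov function $H_v$ of \eqref{2-2-3} along $U(\cdot,b)$ gives at $\rho=1$ the value $\alpha(N-2-\alpha)\big(\tfrac{(b/b_\infty)^{p+1}}{p+1}-\tfrac{(b/b_\infty)^2}2\big)\to+\infty$, and since $H_v$ is non-increasing in $\rho$ by \eqref{2-2-4} (here $\alpha<\frac{N-2}2$), one gets $H_v(\rho_0)\ge H_v(1)\to+\infty$, forcing $\tilde R(\rho_0,b)\to+\infty$. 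Second, $\tilde\Theta(\rho_0,b)\to+\infty$: by a boundary-layer rescaling near $\rho=1$, writing $U(\rho,b)=b\,\phi\big((1-\rho)b^{p-1}\big)$, the profile $\phi$ converges to the solution of $2\xi\phi''+(2\alpha+3-N)\phi'+|\phi|^{p-1}\phi=0$ with $\phi(0)=1$; as $0<2\alpha+3-N<1$, the substitution $\xi=\tau^2/4$ turns this into an anti-damped oscillator, so $\phi$ oscillates and $W(\cdot,b)$ acquires arbitrarily many zeros on $(\rho_0,1)$ — the $U$-family analogue of Proposition \ref{prop0}. Let $b^{**}$ be the first $b>b_\infty$ with $\tilde\Theta(\rho_0,b)=(2n+1)\pi$; then $x<\pi$ on $(b_\infty,b^{**})$. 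Using that $R_\infty:=\sup_{c\ge0}R(\rho_0,c)$ is finite and independent of $n$ (finiteness because $R(\rho_0,c)\to0$ as $c\to\infty$ by \eqref{5-5}), together with $\tilde R(\rho_0,b)\to\infty$, I locate $b<b^{**}$ with $\tilde R(\rho_0,b)>R_\infty$ and $x<\pi$; such a point lies above all of segment $1$ and left of segment $2$, hence in $\tilde X_n$. Moreover $\{\tilde\Psi_n(b):b_\infty<b<b^{**}\}$ has $x<\pi$ throughout, so it never meets segment $2$.

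\emph{Conclusion and main obstacle.} The connected set $\{\tilde\Psi_n(b):b_\infty<b<b^{**}\}$ meets both $\tilde X_n$ and $\tilde Y_n$, so it crosses $\Gamma_n$; by the previous paragraph the crossing lies on segment $1$, yielding $\tilde B_n\in(b_\infty,b^{**})$ and $\tilde C_n>0$ with $\tilde\Psi_n(\tilde B_n)=\Phi(\tilde C_n)$. Equality of the phases and radii at $\rho_0$ identifies $u(\cdot,\tilde C_n)$ with $U(\cdot,\tilde B_n)$, producing a solution regular on $[0,1]$ with $u(0)=\tilde C_n$, $u(1)=\tilde B_n$. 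The zero count is computed exactly as in \eqref{p1-2} (with $C_n,B_n$ replaced by $\tilde C_n,\tilde B_n$): the term $\ent{\Theta(0,\tilde C_n)/\pi-\tfrac12}$ vanishes because $\Theta(0,\cdot)=\pi$; the matching $\tilde\Theta(\rho_0,\tilde B_n)-2n\pi=\Theta(\rho_0,\tilde C_n)$ makes the two middle terms sum to $2n$; and now, since $\tilde B_n>b_\infty$ forces $\tilde\Theta(1,\tilde B_n)\in(-\frac\pi2,\frac\pi2)$ by \eqref{tTheta2}, the last term contributes $+1$ instead of $0$; hence $\frac{u}{u_\infty}-1$ has exactly $2n+1$ zeros. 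I expect the hard part to be the $b\to+\infty$ analysis above, namely proving $\tilde\Theta(\rho_0,b)\to+\infty$ through the boundary layer and the quantitative comparison giving $\tilde R>R_\infty$ before $b^{**}$. A further difficulty, absent from Proposition \ref{prop1}, is that segment $2$ is no longer automatically excluded: the bound \eqref{2-2-29} giving $|W|<1$ at critical points fails for $b>b_\infty$, which is precisely why the restriction to $b<b^{**}$ (where $x<\pi$) is needed.
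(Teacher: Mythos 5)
Your skeleton is the same as the paper's: the strip $\tilde Z_n$, the splitting $\tilde X_n\cup\tilde Y_n$, the $\tilde Y_n$-point from $\tilde R(\rho_0,b)\to0$ as $b\to b_\infty^+$, and the zero count (identical to the paper's \eqref{p2-2}) are all as in the paper, and your $H_v$-argument for $\tilde R(\rho_0,b)\to+\infty$ (monotonicity \eqref{2-2-4} plus $H_v(1,b)=\alpha(N-2-\alpha)\bigl(\frac{(b/b_\infty)^{p+1}}{p+1}-\frac{(b/b_\infty)^2}{2}\bigr)\to+\infty$) is correct and in fact slicker than the paper's direct estimates \eqref{p2-6}--\eqref{p2-11}. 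The genuine gap is your $\tilde X_n$ step: everything there hinges on the claim $\tilde\Theta(\rho_0,b)\to+\infty$ as $b\to+\infty$, which you use to define $b^{**}$ and to exclude segment $2$. This is a $U$-family analogue of Proposition \ref{prop0} that the paper neither proves nor needs, and your boundary-layer sketch is only a heuristic: to make it rigorous you would need uniform convergence of the rescaled profiles $\phi_b$ to the limit profile on expanding $\xi$-intervals (a Gr\"onwall argument of the type used for \eqref{T01}), a proof that all solutions of $2\xi\phi''+(2\alpha+3-N)\phi'+|\phi|^{p-1}\phi=0$, $\phi(0)=1$, have infinitely many zeros (your anti-damped oscillator reduction is plausible but unproven -- one must rule out eventual monotonicity and finite-$\tau$ blow-up), and a transfer of those zeros to zeros of $W(\cdot,b)$. (Only the last transfer step would be easy, via Lemma \ref{L:zeros} and the boundedness of $\tilde\Theta(1,b)$ from \eqref{tTheta2}.) There is also a secondary hole even granting $b^{**}$: ``$x<\pi$ on $(b_\infty,b^{**})$'' requires knowing $\tilde\Theta(\rho_0,b)<(2n+1)\pi$ near $b_\infty^+$, but the angle is not controlled there since $\tilde R(\rho_0,b)\to0$, so the first crossing of the level $(2n+1)\pi$ could a priori be from above.

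The paper is engineered precisely to avoid your winding claim, via a dichotomy on $\bar b:=\sup\{b\geq\hat b\,|\,\tilde\Theta(\rho_0,\tau)<\frac34\pi\ \forall\tau\in[\hat b,b)\}$. If $\bar b=+\infty$, the angle stays in $(-\frac\pi2,\frac34\pi)$ for all $b\geq\frac32 b_\infty$ and the divergence $\tilde R(\rho_0,b)\to\infty$ alone lifts $\tilde\Psi_n(b)$ above the curve; if $\bar b<+\infty$, the point at $b=\bar b$ has angle exactly $\frac34\pi-2n\pi$ and radius $>\sqrt2$ by \eqref{p2-4'}, and the sector-exclusion \eqref{p2-12} (no point of $\Image_1[\Gamma]$ has angle within $\frac\pi4$ of $\pi\ZZ$ and radius $>\sqrt2$, a consequence of the a priori bound \eqref{2-2-29}) forces it into $\tilde X_n$. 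Segment $2$ is then excluded on $(b_\infty,\bar b)$ piecewise: near $b_\infty$ by the pointwise bound \eqref{boundW'} (a segment-$2$ meeting forces $W'(\rho_0,b)=0$ and $\tilde R=|W(\rho_0,b)|\geq1$, contradiction), on $[\frac32 b_\infty,\hat b]$ by \eqref{p2-3'}, and on $[\hat b,\bar b)$ by the definition of $\bar b$ -- no asymptotics of $\tilde\Theta(\rho_0,b)$ as $b\to\infty$ or as $b\downarrow b_\infty$ are ever required. If you replace your $b^{**}$ device by this dichotomy, your (correct) $H_v$ computation slots in directly and the rest of your argument goes through.
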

\begin{proof}
\noindent\emph{Step 1. Choice of $\rho_0$.}
In this step, we prove that there exist a large $\hat{b}\in\left(\frac{3}{2}b_{\infty},+\infty\right)$, $\rho_0\in (0,1)$ close to $1$ and $C_{N,p}>0$ such that
\begin{gather}
  \label{p2-4'}
  \forall \, b\in[\hat{b},+\infty), \qquad \tilde{R}(\rho_{0},b)\geq C_{N,p}\,b>\sqrt{2},\\
\label{p2-3'}
\forall b\in \Big[\frac{3}{2}b_{\infty}, \hat{b}\Big], \qquad -\frac{\pi}{2}<\tilde{\Theta}(\rho_{0},b)<\frac{\pi}{2},\\
\label{boundW'}
 \forall \, b\in\Big[b_{\infty},\frac{3}{2}b_{\infty}\Big), \quad -1<W(\rho_0,b)<1
\end{gather}
and
\begin{equation}\label{p1-1'}
  \forall \,\, n\geq0, \qquad
  \tilde{\Psi}_n\big((b_{\infty},\overline{b})\big)\cap \Image_2[\Gamma]=\emptyset,
\end{equation}
where $\bar{b}:=\sup\left\{b\in[\hat{b},+\infty)\,\big|\,\tilde{\Theta}(\rho_{0},\tau)<\frac{3}{4}\pi, \, \forall \, \tau\in[\hat{b},b)\right\}\in(\hat{b},+\infty]$.

Indeed, it can be deduced from \eqref{2-3-0} and \eqref{2-3-1} that, for any $0<\rho\leq1$ and $b\in(0,+\infty)$,
\begin{equation}\label{p2-6}
  \frac{1-\rho^{2}}{2}\left(U'(\rho,b)\right)^{2}+\frac{|U(\rho,b)|^{p+1}}{p+1}-\frac{p+1}{(p-1)^{2}}U^{2}(\rho,b)\geq\frac{b^{p+1}}{p+1}-\frac{p+1}{(p-1)^{2}}b^{2}.
\end{equation}
Thus there exists a $\tilde{\rho}_{\ast}$ close enough to $1$ such that, for any $\rho\in[\tilde{\rho}_{\ast},1]$ and $b\in(0,+\infty)$,
\begin{eqnarray}\label{p2-7}
&& \frac{b^{p+1}}{p+1}-\frac{p+1}{(p-1)^{2}}b^{2} \\
 \nonumber &\leq& \left(1-\rho^{2}\right)\left(\frac{\alpha U}{\rho}+U'\right)^{2}+\frac{1-\rho^{2}}{\rho^{2}}\alpha^{2}U^{2}+\frac{|U(\rho,b)|^{p+1}}{p+1}-\frac{p+1}{(p-1)^{2}}U^{2}(\rho,b) \\
 \nonumber &\leq& \left|\frac{\alpha U(\rho,b)}{\rho}+U'(\rho,b)\right|^{2}+\frac{|U(\rho,b)|^{p+1}}{p+1} \\
 \nonumber &\leq& \left(|U|+\left|\frac{\alpha U}{\rho}+U'\right|\right)^{2}+\frac{1}{p+1}\left(|U|+\left|\frac{\alpha U}{\rho}+U'\right|\right)^{p+1}.
\end{eqnarray}
It follows from \eqref{p2-7} that, for any $\rho\in[\tilde{\rho}_{\ast},1]$,
\begin{equation}\label{p2-4}
  \lim_{b\rightarrow+\infty}\left(|U(\rho,b)|+\left|\frac{\alpha U(\rho,b)}{\rho}+U'(\rho,b)\right|\right)=+\infty,
\end{equation}
and moreover, there exists a $\hat{b}\in\left(\frac{3}{2}b_{\infty},+\infty\right)$ sufficiently large, such that, for any $b\geq\hat{b}$ and $\rho\in[\tilde{\rho}_{\ast},1]$,
\begin{equation}\label{p2-8}
  \frac{b^{p+1}}{2(p+1)}\leq \frac{p+2}{p+1}\left(|U(\rho,b)|+\left|\frac{\alpha U(\rho,b)}{\rho}+U'(\rho,b)\right|\right)^{p+1},
\end{equation}
and hence
\begin{equation}\label{p2-9}
  \forall \, b\in[\hat{b},+\infty), \quad \forall \, \rho\in[\tilde{\rho}_{\ast},1], \qquad |U(\rho,b)|+\left|\frac{\alpha U(\rho,b)}{\rho}+U'(\rho,b)\right|\geq C_{p}b.
\end{equation}

Recalling the definition \eqref{3-0} of $\tilde{R}(\rho,b)$, we have, for any $\rho\in[\tilde{\rho}_{\ast},1]$ (choose $\tilde{\rho}_{\ast}$ closer to $1$ if necessary) and $b\in(0,+\infty)$,
\begin{eqnarray}\label{p2-10}
  \tilde{R}(\rho,b)&=& \sqrt{W(\rho,b)^{2}+\rho^{2}W'(\rho,b)^{2}}=\sqrt{\left(\frac{U(\rho,b)}{u_{\infty}(\rho)}-1\right)^{2}
  +\rho^{2}\left[\left(\frac{U(\rho,b)}{u_{\infty}(\rho)}\right)'\right]^{2}} \\
 \nonumber &=&\frac{1}{b_{\infty}}\sqrt{\left(\rho^{\alpha}U(\rho,b)-b_{\infty}\right)^{2}+\rho^{2\alpha+2}\left(\frac{\alpha U(\rho,b)}{\rho}+U'(\rho,b)\right)^{2}} \\
 \nonumber &\geq&\frac{1}{\sqrt{2}b_{\infty}}\left(\rho^{\alpha}|U(\rho,b)|-b_{\infty}+\rho^{\alpha+1}\left|\frac{\alpha U(\rho,b)}{\rho}+U'(\rho,b)\right|\right) \\
 \nonumber &\geq& \frac{1}{2\sqrt{2}b_{\infty}}\left(|U(\rho,b)|+\left|\frac{\alpha U(\rho,b)}{\rho}+U'(\rho,b)\right|\right)-\frac{1}{\sqrt{2}}.
\end{eqnarray}
As a consequence of \eqref{p2-4} and \eqref{p2-9}, \eqref{p2-10} implies further that, for any $\rho\in[\tilde{\rho}_{\ast},1]$,
\begin{equation}\label{p2-5}
  \lim_{b\rightarrow+\infty}\tilde{R}(\rho,b)=+\infty,
\end{equation}
and furthermore, for any $b\in[\hat{b},+\infty)$ (choose $\hat{b}$ larger if necessary) and any $\rho\in[\tilde{\rho}_{\ast},1]$,
\begin{equation}\label{p2-11}
  \tilde{R}(\rho,b)\geq C_{N,p}b>\sqrt{2}.
\end{equation}

By Lemma \ref{L:tTheta}, $-\frac{\pi}{2}<\tilde{\Theta}(1,b)<\frac{\pi}{2}$ for any $b_{\infty}<b<+\infty$, thus there exists a $\bar{\rho}_{\ast}\in[\tilde{\rho}_{\ast},1)$ close to $1$ such that, for any $\rho\in[\bar{\rho}_{\ast},1]$,
\begin{equation}\label{p2-3}
  \forall \,\, \frac{3}{2}{b}_{\infty}\leq b\leq\hat{b}, \qquad -\frac{\pi}{2}<\tilde{\Theta}(\rho,b)<\frac{\pi}{2}.
\end{equation}
One can also infer that, there exists a $\rho_{0}\in[\bar{\rho}_{\ast},1)$ close enough to $1$ such that $U(\rho,b)>0$ for any $\rho\in[\rho_{0},1]$ and $b\in[b_{\infty},\frac{3}{2}b_{\infty})$. It follows that \eqref{p2-5}, \eqref{p2-11} and \eqref{p2-3} hold at $\rho=\rho_{0}$ (so we have derived
\eqref{p2-4'} and \eqref{p2-3'}), and $U(\rho_{0},b)>0$ and hence $W(\rho_{0},b)>-1$ for any $b_{\infty}\leq b<\frac{3}{2}b_{\infty}$. To obtain the upper bound in \eqref{boundW'}, it suffices to notice that $W(1,b)=b/b_{\infty}-1<\frac{1}{2}$ for $b\in [b_{\infty},\frac{3}{2}b_{\infty})$. By continuity of $W$, taking $\rho_0$ closer to $1$ if necessary, we obtain \eqref{boundW'}.

Next, define $\bar{b}:=\sup\left\{b\in[\hat{b},+\infty)\,\big|\,\tilde{\Theta}(\rho_{0},\tau)<\frac{3}{4}\pi, \,\forall\, \tau\in[\hat{b},b)\right\}\in(\hat{b},+\infty]$, we will prove \eqref{p1-1'} for any $n\geq0$. We argue by contradiction. Suppose $$(X,Y)\in\tilde{\Psi}_n\big((b_{\infty},\bar{b})\big)\cap \Image_2[\Gamma].$$
Recall that $I_2(\Gamma)=\{\pi\}\times [1,+\infty)$. Then, from \eqref{p2-3'} and the definition of $\bar{b}$, we infer that, there must exist $b\in(b_{\infty},\frac{3}{2}b_{\infty})$ such that $X=\tilde{\Theta}(\rho_0,b)-2n\pi=\pi$ and $Y=\tilde{R}(\rho_0,b)\geq 1$. Taking the tangent of $X$, we see that $W'(\rho_0,b)=0$. Thus by \eqref{boundW'}, $\tilde{R}(\rho_0,b)=|W(\rho_0,b)|<1$, a contradiction.

\medskip

\noindent\emph{Step 2. Topological argument.}
Since $\lim_{c\rightarrow+\infty}\Theta(\rho_{0},c)=-\infty$ by Proposition \ref{prop0} and $\Theta(\rho_0,0)=\pi$ by the construction of $\Theta$, there exists a $\tilde{c}_{n}>0$ such that $\Theta(\rho_{0},\tilde{c}_{n})=-\frac{\pi}{2}-2n\pi$ and $\Theta(\rho_{0},c)>-\frac{\pi}{2}-2n\pi$ for any $c\in[0,\tilde{c}_n)$. For every $n\geq0$, define
$$\tilde{Z}_{n}:=\{(x,y)\in\mathbb{R}^{2}_{+}|\,x>-\frac{\pi}{2}-2n\pi\}, \quad \tilde{\Gamma}_{n}:=\Gamma_{\restriction (-\infty,\tilde{c}_n]}.$$
Recall that by construction $\tilde{\Theta}(\rho,b)>-\frac{\pi}{2}$ for any $0<\rho\leq1$ and $b_{\infty}<b<+\infty$, thus $\Image[\tilde{\Psi}_{n}]\subset\tilde{Z}_{n}$. Moreover, the curve $\tilde{\Gamma}_{n}$ separates the domain $\tilde{Z}_{n}$ into two open regions (see Figure \ref{Fig2}): $\tilde{Z}_{n}\setminus\Image[\tilde{\Gamma}_{n}]=\tilde{X}_{n}\cup\tilde{Y_{n}}$ with $\tilde{X}_{n}\cap\tilde{Y_{n}}=\emptyset$. We denote by $\tilde{X}_n$ the region containing $\left(-\frac{\pi}{2}-2n\pi,\pi\right)\times [M,+\infty)$ for some large $M$. Thus $\tilde{Y}_n$ is the region with $\partial\tilde{Y}_{n}$ containing $\left(-\frac{\pi}{2}-2n\pi,+\infty\right)\times \{0\}$. We will prove: for any $n\geq0$,
\begin{gather}
\label{Step2_2'}
\exists \hat{b}_n\in \Big[\frac{3}{2}b_{\infty},\bar{b}\Big),\quad \tilde{\Psi}_n(\hat{b}_n)\in \tilde{X}_n, \\
\label{Step2_3'}
\exists \tilde{b}_n\in \Big(b_{\infty},\frac{3}{2}b_{\infty}\Big), \quad \tilde{\Psi}_n(\tilde{b}_n)\in \tilde{Y}_n.
\end{gather}

To this end, we will first show by contradiction that, for any integer $k\in \ZZ$,
\begin{equation}\label{p2-12}
   \left[k\pi-\frac{\pi}{4},k\pi+\frac{\pi}{4}\right]\times\left(\sqrt{2},+\infty\right)\cap\Image_{1}[\Gamma]=\emptyset.
\end{equation}
Indeed, assume that there exists $c_1>0$ such that
$$\Theta(\rho_{0},c_1)\in \left[k\pi-\frac{\pi}{4},k\pi+\frac{\pi}{4}\right] \,\,\, \text{ and } \,\,\, \sqrt{2}<R(\rho_0,c_1)<+\infty.$$
This implies $\rho_{0}|w'(\rho_0,c_1)|\leq |w(\rho_{0},c_{1})|$ and thus $\sqrt{2}<R(\rho_0,c_1)\leq\sqrt{2}|w(\rho_0,c_1)|$, hence $|w(\rho_0,c_1)|>1$, contradicting the bound \eqref{2-2-29} on $u$.

Now, define $M_{n}:=\sup_{0\leq c\leq \tilde{c}_{n}}R(\rho_{0},c)$ and $m_{n}:=\inf_{0\leq c\leq \tilde{c}_{n}}R(\rho_{0},c)$.

We carry out the proof of \eqref{Step2_2'} by discussing two different cases.

\emph{Case i)} $\bar{b}=+\infty$. By the definition of $\bar{b}$ and \eqref{p2-3'}, in such case, we have
\begin{equation}\label{p2-13}
  \forall \,\frac{3}{2}b_{\infty}\leq b<+\infty, \qquad \tilde{\Theta}(\rho_{0},b)\in\left(-\frac{\pi}{2},\frac{3}{4}\pi\right).
\end{equation}
By \eqref{p2-4'}, there is a $\hat{b}_{n}\in[\frac{3}{2}b_{\infty},+\infty)$ sufficiently large such that $\tilde{R}(\rho_{0},\hat{b}_{n})>M_{n}$. It follows from \eqref{p2-13} that $-\frac{\pi}{2}<\tilde{\Theta}(\rho_{0},\hat{b}_{n})<\frac{3}{4}\pi$, thus the point $\tilde{P}_{n}=(\tilde{\Theta}(\rho_{0},\hat{b}_{n})-2n\pi,\tilde{R}(\rho_{0},\hat{b}_{n}))\in\Image[\tilde{\Psi}_{n}]$ is above the curve of $\tilde{\Gamma}_{n}$, i.e., $\tilde{P}_{n}\in \tilde{X}_{n}$, and hence \eqref{Step2_2'} holds.

\emph{Case ii)} $\bar{b}<+\infty$. By the definition of $\bar{b}$, in such case, we have
\begin{equation}\label{p2-14}
  \tilde{\Theta}(\rho_{0},\bar{b})=\frac{3}{4}\pi.
\end{equation}
Since $\bar{b}>\hat{b}$, \eqref{p2-4'} yields that $\tilde{R}(\rho_{0},\bar{b})>\sqrt{2}$. Let $\hat{b}_{n}:=\bar{b}$ and $$\tilde{P}_{n}:=\left(\tilde{\Theta}(\rho_{0},\hat{b}_{n})-2n\pi,\tilde{R}(\rho_{0},\hat{b}_{n})\right)=\left(\frac{3}{4}\pi-2n\pi,\tilde{R}(\rho_{0},\bar{b})\right).$$ Then $\tilde{P}_{n} \in\Image[\tilde{\Psi}_{n}]\cap\left[\pi-\frac{\pi}{4}-2n\pi,\pi+\frac{\pi}{4}-2n\pi\right]\times\left(\sqrt{2},+\infty\right)$, and it follows from \eqref{p2-12} that $\tilde{P}_{n}\notin\Image[\tilde{\Gamma}_{n}]$ is above the curve of $\tilde{\Gamma}_{n}$, i.e., $\tilde{P}_{n}\in \tilde{X}_{n}$, and hence \eqref{Step2_2'} holds.

Since $\lim_{b\rightarrow b_{\infty}}\tilde{R}(\rho_{0},b)=0$, there must exist a $\tilde{b}_{n}\in(b_{\infty},\frac{3}{2}b_{\infty})$ such that the point $\tilde{Q}_{n}:=(\tilde{\Theta}(\rho_{0},\tilde{b}_{n})-2n\pi,\tilde{R}(\rho_{0},\tilde{b}_{n}))\in\Image[\tilde{\Psi}_{n}]$ and $\tilde{R}(\rho_{0},\tilde{b}_{n})<m_{n}$, hence $\tilde{Q}_{n}$ is below the curve of $\tilde{\Gamma}_{n}$, i.e., $\tilde{Q}_{n}\in \tilde{Y}_{n}$. This establishes \eqref{Step2_3'}.

\begin{figure}[htb]
\centering
\includegraphics[width=0.75\textwidth]{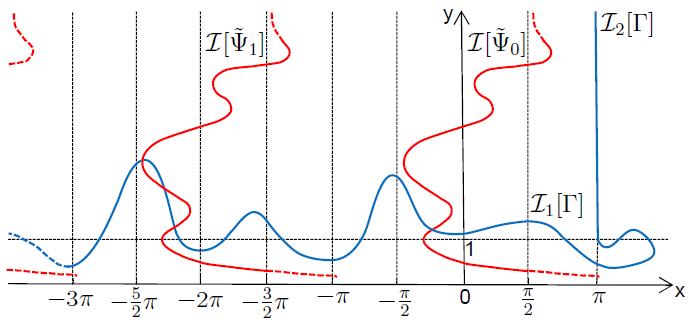}
\caption{Curves for $\Image_{1}[\Gamma]$, $\Image_{2}[\Gamma]$ and $\Image[\tilde{\Psi}_{n}]$ ($n\geq0$).}
\label{Fig2}
\end{figure}

\noindent\emph{Step 3. Conclusion of the proof.} Since by Step 2, $\Image[\tilde{\Psi}_n]$ has a point in $\tilde{X}_n$ and a point in $\tilde{Y}_n$, it must intersect $\Image[\Gamma]$. More precisely, from \eqref{Step2_2'} and \eqref{Step2_3'} in Step 2, we actually have, for any $n\geq0$,
\begin{equation}\label{p2-15}
 \tilde{\Psi}_n\big((b_{\infty},\bar{b})\big)\cap\Image[\Gamma]\neq\emptyset.
\end{equation}
Hence by \eqref{p1-1'}, we have $\tilde{\Psi}_n\big((b_{\infty},\bar{b})\big)\cap\Image_{1}[\Gamma]\neq\emptyset$, and there exist $b_{\infty}<\tilde{B}_{n}<+\infty$ and $\tilde{C}_{n}>0$ such that $\tilde{\Psi}_{n}(\tilde{B}_{n})=\Phi(\tilde{C}_{n})$. As a consequence, the functions $u(\rho,\tilde{C}_{n})$ and $U(\rho,\tilde{B}_{n})$ and their derivatives match at $\rho_{0}$, hence we derive a solution $u$ to \eqref{PDE} defined and regular on the whole interval $0\leq\rho\leq1$. The number of zeros of $\frac{u}{u_{\infty}}-1$ is odd and is counted by
\begin{equation}\label{p2-2}
\ent{\frac{\Theta(0,\tilde{C}_n)}{\pi}-\frac{1}{2}}-\ent{\frac{\Theta(\rho_0,\tilde{C}_n)}{\pi}-\frac{1}{2}}+\ent{\frac{\tilde{\Theta}(\rho_0,\tilde{B}_n)}{\pi}-\frac{1}{2}}-\ent{\frac{\tilde{\Theta}(1,\tilde{B}_n)}{\pi}-\frac{1}{2}}.
\end{equation}
Since  $\Theta(0,\tilde{C}_n)=\pi$,  the contribution of the first term is $0$. Using that $\tilde{\Psi}_n(\tilde{B}_n)=\Phi(\tilde{C}_n)$, we obtain that $\tilde{\Theta}(\rho_0,\tilde{B}_n)-2n\pi=\Theta(\rho_{0},\tilde{C}_n)$, and thus the contribution of the sum of the second and third terms is $2n$. Finally, by \eqref{tTheta2}, $-\frac{\pi}{2}<\tilde{\Theta}(1,\tilde{B}_n)<\frac{\pi}{2}$, and thus the contribution of the last term is $1$. Hence the number of zeros of $\frac{u}{u_{\infty}}-1$ is exactly $2n+1$, as announced. This concludes our proof of Proposition \ref{prop2}.
\end{proof}

\emph{Proof of Theorem \ref{Thm0} (completed).}
Set for any $n\geq 1$, $b_{2n-1}=B_{n}$ and $b_{2n}=\tilde{B}_{n}$, where $B_n$ is given by Proposition \ref{prop1} and $\tilde{B}_n$ by Proposition \ref{prop2}.
Theorem \ref{Thm0} is an immediate consequence of these two propositions. 

\section{Existence of self-similar solutions in the case $p=1+\frac{4}{N-3}$}

\label{S:critical_case}
In this section we prove Theorem \ref{T:critical_case}.
In all this section, we assume $p=1+\frac{4}{N-3}$.

\subsection{Cauchy theory at the boundary of the wave cone}
When $p=1+\frac{4}{N-3}$, the coefficient in front of $u'$ in \eqref{PDE} vanishes at $\rho=1$, and it is thus clear that a regular solution defined in a neighborhood of $\rho=1$ must satisfy $U(1)\in \{0,b_0,-b_0\}$. In the case $U(1)=b_0$, the local well-posedness close to $\rho=1$ takes the following form:
\begin{prop}
 \label{P:local2}
 Assume $p=1+\frac{4}{N-3}$. Then for all $a\in \RR$, there exists a unique $C^2$ solution $U(\rho)=U(\rho,a)$ of \eqref{PDE}, defined in a neighborhood of $\rho=1$, and such that
 \begin{equation}
  \label{BC_local2}
  U(1,a)=b_0,\quad U'(1,a)=a.
 \end{equation}
 Furthermore $U''(1)=-\frac{N-1}{2}a$ and $U$ can be extended to a $C^2$ solution of \eqref{PDE} on $(0,1)$.
\end{prop}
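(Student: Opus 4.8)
The plan is to exploit the degeneracy of \eqref{PDE} at $\rho=1$ that is special to the exponent $p=1+\frac{4}{N-3}$, i.e. $\alpha=\frac{N-3}{2}$. For this value both the coefficient $1-\rho^2$ of $u''$ and the coefficient $\frac{N-1}{\rho}-2(\alpha+1)\rho$ of $u'$ vanish at $\rho=1$, and the self-adjoint form \eqref{2-2-5} collapses to
\[
\big(\rho^{N-1}u'\big)'=\frac{\rho^{N-1}}{1-\rho^2}\,G(u),\qquad G(u):=u\big(b_0^{p-1}-|u|^{p-1}\big).
\]
The main obstacle is precisely this singular/degenerate character: the standard Cauchy--Lipschitz theory does not apply at $\rho=1$. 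The crucial remark that saves the day is $G(b_0)=0$ (because $b_0^{p-1}=\alpha(\alpha+1)$), so the apparently singular factor $\frac{1}{1-\rho^2}$ is harmless as soon as $u$ reaches the value $b_0$ at $\rho=1$ linearly: if $u(\tau)-b_0=O(|\tau-1|)$ then $\frac{G(u(\tau))}{1-\tau^2}$ stays bounded near $\tau=1$. This is what makes a $C^2$ Cauchy theory possible here, and it dictates working in a space of Lipschitz functions.

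First I would set up the integral formulation. Integrating the displayed equation twice from $\rho=1$, using $\lim_{\rho\to1}\rho^{N-1}u'(\rho)=U'(1)=a$ (there is no boundary term since $1^{N-1}=1$), a $C^2$ solution with $U(1)=b_0$, $U'(1)=a$ is a fixed point of
\[
\mathcal{T}(u)(\rho)=b_0+\int_1^\rho \sigma^{1-N}\left(a+\int_1^\sigma \frac{\tau^{N-1}}{1-\tau^2}\,G(u(\tau))\,d\tau\right)d\sigma.
\]
I would solve $u=\mathcal{T}(u)$ by contraction on $Y=\{u\in C^1([1-\delta,1+\delta]):u(1)=b_0,\ u'(1)=a,\ \|u'-a\|_\infty\le\eta\}$, equipped with the complete metric $d(u_1,u_2)=\|u_1'-u_2'\|_\infty$ (equivalent to the $C^1$ metric on $Y$ since $u_1-u_2$ vanishes at $\rho=1$). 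The key estimate uses that $u_i(1)=b_0$ forces $|u_1(\tau)-u_2(\tau)|\le|\tau-1|\,d(u_1,u_2)$, whence $\frac{\tau^{N-1}}{|1-\tau^2|}|G(u_1(\tau))-G(u_2(\tau))|\le L\,\frac{\tau^{N-1}}{1+\tau}\,d(u_1,u_2)$ is \emph{bounded} (not merely integrable), $L$ being a Lipschitz constant for $G$ near $b_0$. A double integration then yields $d(\mathcal{T}u_1,\mathcal{T}u_2)\le C\delta\,d(u_1,u_2)$, a contraction once $\delta$ is small; a parallel estimate, together with $|u(\tau)-b_0|\le(|a|+\eta)|\tau-1|$, shows $\mathcal{T}(Y)\subset Y$. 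This produces the unique $U(\cdot,a)\in C^1$. Uniqueness among all $C^2$ solutions follows because any such solution satisfies the same integral equation (integrate \eqref{2-2-5} from $\rho=1$) and lies in $Y$ for $\delta$ small.

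Next I would upgrade to $C^2$ and compute $U''(1)$. From $U'(\rho)=\rho^{1-N}\big(a+\int_1^\rho \frac{\tau^{N-1}}{1-\tau^2}G(U)\,d\tau\big)$ and the expansions $U(\tau)-b_0=a(\tau-1)+o(\tau-1)$, $1-\tau^2=-2(\tau-1)+o(\tau-1)$, the integrand extends continuously to $\tau=1$ with value $-\tfrac{a}{2}G'(b_0)$; hence $U'\in C^1$ and $U\in C^2$. Differentiating and letting $\rho\to1$, $U''(1)=-(N-1)a+\lim_{\rho\to1}\frac{G(U(\rho))}{1-\rho^2}=-(N-1)a-\tfrac{a}{2}G'(b_0)$. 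Since $G'(b_0)=(1-p)b_0^{p-1}$ and $(p-1)b_0^{p-1}=2(\alpha+1)=N-1$, this gives $U''(1)=-(N-1)a+\tfrac{N-1}{2}a=-\tfrac{N-1}{2}a$, as claimed.

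Finally, the extension to $(0,1)$ is routine: on $(0,1)$ equation \eqref{PDE} is a regular ODE (the leading coefficient $1-\rho^2$ does not vanish), so the solution continues uniquely to the left until it might blow up. To preclude blow-up I would reuse the argument of Step 3 of the proof of Proposition \ref{P:local1}: the Lyapunov functional $H$ of \eqref{2-3-0}--\eqref{2-3-1} satisfies the differential inequality \eqref{2-3-5}, which upon integration bounds $H$, and hence $u$ and $u'$, on every $[\rho_*,1]$ with $0<\rho_*<1$. Thus $U$ extends to a $C^2$ solution on $(0,1)$, completing the proof.
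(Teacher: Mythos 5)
Your proposal is correct and takes essentially the same approach as the paper: the same integral reformulation of the self-adjoint equation \eqref{C11}, a contraction mapping on the same space of $C^1$ functions with $u(1)=b_0$, $u'(1)=a$ under the metric $d(u_1,u_2)=\max|u_1'-u_2'|$, exploiting the key bound $|u_1(\tau)-u_2(\tau)|\le|\tau-1|\,d(u_1,u_2)$ to cancel the singular factor $(1-\tau^2)^{-1}$, and the same Lyapunov-functional argument (Step 3 of Proposition \ref{P:local1}) for the extension to $(0,1)$. Your explicit computation of $U''(1)$ via $G'(b_0)=(1-p)b_0^{p-1}$ and $(p-1)b_0^{p-1}=N-1$ simply spells out the first-order Taylor expansion of the right-hand side of \eqref{C11} that the paper invokes.
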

The proof of Proposition \ref{P:local2} yields the following uniqueness statement:
\begin{lem}
 \label{L:local_uniq}
 Let $\tau\in (0,1)$, $a\in \RR$ and $u$ be a $C^2$ solution of \eqref{PDE} on $(\tau,1)$ such that
 \begin{equation}
 \label{limite}
 \lim_{\rho \overset{<}{\to} 1}u(\rho)=b_0,\quad \lim_{\rho \overset{<}{\to} 1}u'(\rho)=a.
 \end{equation}
 Then for all $\rho\in (\tau,1)$, $u(\rho)=U(\rho,a)$.
\end{lem}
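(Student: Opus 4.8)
The plan is to reduce the statement to the uniqueness already built into the fixed-point construction of $U(\cdot,a)$ in Proposition \ref{P:local2}, the only genuinely new issue being that $u$ is a priori defined only on the open interval $(\tau,1)$ and is controlled at $\rho=1$ solely through the limits \eqref{limite}. First I would show that $u$ coincides with $U(\cdot,a)$ on a small left-neighborhood $(1-\eta,1)$ of $\rho=1$, and then extend this equality to all of $(\tau,1)$ by classical Cauchy--Lipschitz uniqueness, using that \eqref{PDE} is a regular (non-degenerate) second-order ODE away from $\rho=1$.

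For the first step, recall that in the critical case $\alpha=\frac{N-3}{2}$ the self-adjoint form \eqref{2-2-5} reads $\left(\rho^{N-1}u'\right)'=g(\rho)$, with $g(\rho):=\rho^{N-1}(1-\rho^2)^{-1}u(\rho)\left(b_0^{p-1}-|u(\rho)|^{p-1}\right)$, and that the proof of Proposition \ref{P:local2} realizes $U(\cdot,a)$ as the unique fixed point, in a ball of $C^0([1-\eta,1])$ centered at the constant $b_0$, of the Volterra operator obtained by integrating this identity twice from $\rho=1$ with the boundary data \eqref{BC_local2}. To see that $u$ satisfies the same integral equation, I would integrate $\left(\rho^{N-1}u'\right)'=g$ over $[\rho,\rho']\subset(\tau,1)$ and let $\rho'\to 1^-$. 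The essential point is that $g$ is integrable (in fact bounded) near $\rho=1$: the boundary condition $u(1)=b_0$ makes the factor $u(b_0^{p-1}-|u|^{p-1})$ vanish at $\rho=1$, which cancels the $(1-\rho^2)^{-1}$ singularity. Concretely, since $u'(\rho)\to a$ we have $u(\rho)-b_0\sim a(\rho-1)$ and hence $b_0^{p-1}-|u(\rho)|^{p-1}\sim-(p-1)b_0^{p-2}a(\rho-1)$ as $\rho\to 1^-$, so that $g(\rho)$ tends to the finite limit $\tfrac12(p-1)b_0^{p-1}a$. Passing to the limit, with $u'(\rho')\to a$ and $u(\rho')\to b_0$, then gives $u'(\rho)=\rho^{-(N-1)}\bigl(a-\int_\rho^1 g(s)\,ds\bigr)$ and, after a second integration, $u(\rho)=b_0-\int_\rho^1 t^{-(N-1)}\bigl(a-\int_t^1 g(s)\,ds\bigr)\,dt$; that is, $u$ is a fixed point of the same operator.

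The Volterra structure makes the operator a contraction on $C^0([1-\eta,1])$ for every sufficiently small $\eta$, so taking $\eta$ small enough that $u$ also stays inside the relevant ball on $[1-\eta,1]$ (possible since $u\to b_0$ and $u'\to a$), uniqueness of the fixed point forces $u=U(\cdot,a)$ on $(1-\eta,1)$. Choosing any $\rho_1\in(1-\eta,1)$, the two functions $u$ and $U(\cdot,a)$ share the same Cauchy data $(u,u')$ at $\rho_1$; since \eqref{PDE} is a regular ODE on $(\tau,1)$, Cauchy--Lipschitz uniqueness yields $u\equiv U(\cdot,a)$ on all of $(\tau,1)$, as claimed.

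The step I expect to require the most care is the passage to the limit $\rho'\to 1^-$ in the integrated equation, i.e. the integrability of $g$ up to the boundary. This is precisely where the critical structure enters: both the coefficient $1-\rho^2$ of $u''$ and the coefficient $\frac{N-1}{\rho}-2(\alpha+1)\rho$ of $u'$ vanish at $\rho=1$, and it is the identity $b_0^{p-1}=\alpha(\alpha+1)$ together with $u(1)=b_0$ that tames the apparent singularity. Once this boundary integrability is established, the remainder is a routine combination of the fixed-point uniqueness of Proposition \ref{P:local2} and classical ODE uniqueness.
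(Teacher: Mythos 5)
Your overall strategy --- derive the Volterra integral equation satisfied by $u$ up to $\rho=1$, invoke uniqueness of a fixed point near $\rho=1$, then propagate the equality by Cauchy--Lipschitz on $(\tau,1)$ --- is exactly the route the paper takes, and your first step is carried out correctly and in more detail than the paper's own one-line remark: the limits \eqref{limite} give $|u(\rho)-b_0|\leq C(1-\rho)$ near $\rho=1$, hence the right-hand side $g(\rho)=\rho^{N-1}(1-\rho^2)^{-1}u\left(b_0^{p-1}-|u|^{p-1}\right)$ of \eqref{C11} is bounded (indeed $g\to\frac12(p-1)b_0^{p-1}a$, which is consistent with $U''(1)=-\frac{N-1}{2}a$ in Proposition \ref{P:local2}), and the passage to the limit $\rho'\to 1^-$ in the twice-integrated equation, yielding the analogue of \eqref{C12}, is legitimate.

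The middle step, however, contains a genuine gap: the operator is \emph{not} a contraction on a ball of $C^0([1-\eta,1])$ centered at the constant $b_0$, and it is not even well defined on such a ball. If $U,V$ are controlled only in sup norm, the difference of the nonlinear terms is bounded only by $k\|U-V\|_\infty$, and the inner integral $\int_t^1(1-\sigma^2)^{-1}\|U-V\|_\infty\,d\sigma$ diverges: the kernel $(1-\sigma^2)^{-1}$ is non-integrable at $\sigma=1$, which is precisely the critical degeneracy of the case $p=1+\frac{4}{N-3}$. (Worse, for a generic continuous $U$ in the ball with $U(1)=b_0$, say $U-b_0\sim 1/|\log(1-\sigma)|$, the function $\sigma\mapsto(1-\sigma^2)^{-1}U(b_0^{p-1}-|U|^{p-1})$ is itself non-integrable, so the operator does not map the $C^0$ ball into itself.) What rescues the argument is that the two solutions being compared share both the value $b_0$ \emph{and} the derivative $a$ at $\rho=1$, so their difference vanishes linearly there; this is why the paper runs the contraction in a $C^1$-type space --- here the one-sided space $Y_a^-$ of $C^1$ functions on $[1-\eps,1]$ with $U(1)=b_0$, $U'(1)=a$, $|U'-a|\leq 1$, equipped with $d_-(U,V)=\max_{1-\eps\leq\rho\leq 1}|U'(\rho)-V'(\rho)|$ --- in which the bound \eqref{nice_Ya}, $|U(\sigma)-V(\sigma)|\leq|1-\sigma|\,d(U,V)$, exactly cancels the singular factor $(1-\sigma^2)^{-1}$. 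Your own observation that $u'\to a$ shows that $u$, extended by continuity to $[1-\eps,1]$, lies in $Y_a^-$, so your proof is repaired verbatim by replacing the $C^0$ ball and sup-norm metric with $Y_a^-$ and $d_-$ (equivalently, a weighted norm $\sup_{\rho}|U(\rho)-V(\rho)|/(1-\rho)$); with that substitution, the remainder of your argument --- fixed-point uniqueness near $\rho=1$, then classical ODE uniqueness on $(\tau,1)$, where \eqref{PDE} is non-degenerate --- is correct and coincides with the paper's proof.
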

\begin{proof}[Proof of Proposition \ref{P:local2}]

 When $\alpha=1+\frac{4}{N-3}$, the equation \eqref{PDE} has the following self-adjoint form
 \begin{equation}
\label{C11}
\left( \rho^{N-1}U' \right)'=\rho^{N-1}(1-\rho^2)^{-1} U\left( b_0^{p-1}-|U|^{p-1} \right).
 \end{equation}
 Let $\eps>0$ be a small parameter (to be specified later). If $U\in C^2([1-\eps,1+\eps])$ satisfies $U(1)=b_0$, we see that the right-hand side of \eqref{C11} is continuous on $[1-\eps,1+\eps]$ and we obtain, integrating again, and denoting $a=U'(1)$,
 \begin{equation}
  \label{C12}
U(\rho)=b_0+\frac{\rho^{2-N}-1}{2-N} a +\int_1^{\rho}\int_{1}^{\tau} \left( \frac{\sigma}{\tau} \right)^{N-1} \left( 1-\sigma^2 \right)^{-1} U(\sigma)\left( b_0^{p-1}-|U(\sigma)|^{p-1} \right)d\sigma d\tau.
 \end{equation}
 Let
 $$Y_a=\Big\{ U\in C^{1}\left( [1-\eps,1+\eps]\right)\,\big|\, U(1)=b_0,\; U'(1)=a\text{ and }\forall \rho\in [1-\eps,1+\eps],\; \left|U'(\rho)-a\right|\leq 1\Big\}.$$
 Defining $d(U,V)=\max_{|\rho-1|\leq \eps}\left|U'(\rho)-V'(\rho)\right|$, we see that $(Y_a,d)$ is a complete metric space, and that
\begin{equation}
\label{nice_Ya}
 \forall (U,V)\in Y_a^2,\; \forall \rho\in [1-\eps,1+\eps], \quad |U(\rho)-V(\rho)|\leq |1-\rho|\,d(U,V).
\end{equation}
Let, for $U\in Y_a$,
$$
\Phi(U)(\rho) =\int_1^{\rho}\int_{1}^{\tau} \left( \frac{\sigma}{\tau} \right)^{N-1} \left( 1-\sigma^2 \right)^{-1} U(\sigma)\left( b_0^{p-1}-|U(\sigma)|^{p-1} \right)d\sigma d\tau.
 $$
There exists a constant $k$, depending only on $N$, such that
\begin{equation}
 \label{lipschtiz}
\forall (X,Y)\in [b_0-1,b_0+1]^{2},\quad
\left| X\left(b_0^{p-1}-|X|^{p-1}\right)-Y\left(b_0^{p-1}-|Y|^{p-1}\right)\right| \leq k|X-Y|.
\end{equation}
If $U\in Y_a$, then $|U(\rho)-b_0|\leq (1+|a|)|\rho-1|\leq \eps(1+|a|)$ for $\rho\in [1-\eps,1+\eps]$. Choosing $\eps$ small enough, we obtain, for $(U,V)\in Y_a^2$,
$$ \left|U(\sigma)\left( b_0^{p-1}-|U(\sigma)|^{p-1} \right)-V(\sigma)\left( b_0^{p-1}-|V(\sigma)|^{p-1} \right)\right|\leq k|U(\sigma)-V(\sigma)|\leq k|\sigma-1|d(U,V).$$
Thus, for $1-\eps\leq \rho\leq 1+\eps$,
\begin{equation*}
 \left|\frac{d}{d\rho}\Big( \Phi(U)(\rho)-\Phi(V)(\rho) \Big)\right|\leq 2k\left|\int_{1}^{\rho} \left( \frac{\sigma}{\rho} \right)^{N-1} d\sigma\right| d(U,V)\leq 4k\eps d(U,V).
\end{equation*}
Letting $\eps$ small enough, we obtain, for $1-\eps\leq \rho\leq 1+\eps$,
$$ \left|\frac{d}{d\rho} (\Phi(U)(\rho)-\Phi(V)(\rho))\right|\leq \frac{1}{10} d(U,V).$$
Thus $U\mapsto b_0+\frac{\rho^{2-N}-1}{2-N}a+\Phi(U)$ is a contraction on $Y_a$. This yields a fixed point for this mapping. Using the definition of $\Phi$, we see that this solution is indeed $C^2$ on $[1-\eps,1+\eps]$ and satisfies \eqref{PDE} on this interval.
Using a Taylor expansion of order $1$ of the right-hand side of \eqref{C11}, we deduce $U''(1)=-\frac{N-1}{2}a$.

The proof of the fact that $U$ can be extended to a solution of \eqref{PDE} on $(0,1]$ is the same as in the case $p<1+\frac{4}{N-3}$ and we omit it.
\end{proof}
\begin{proof}[Proof of Lemma \ref{L:local_uniq}]
 We note that if $\eps>0$ is small enough, the preceding fixed point argument works exactly the same in the complete metric space
 $$Y_a^-=\left\{ U\in C^{1}\left( [1-\eps,1]\right)\,\big|\, U(1)=b_0,\; U'(1)=a\text{ and }\forall \rho\in [1-\eps,1],\; \left|U'(\rho)-a\right|\leq 1\right\},$$
 with the distance $d_-(U,V)=\max_{1-\eps\leq \rho \leq 1}\left|U'(\rho)-V'(\rho)\right|$. If $u$ is a solution of \eqref{PDE} defined on $(\tau,1)$ (where $\tau<1$) and satisfying \eqref{limite}, then for $\eps>0$ small enough, $u_{\restriction [1-\eps,1)}$ can be extended to a $C^1$ function on $[1-\eps,1]$ which is in the space $Y_a$. Uniqueness in the fixed point then yields that $u(\rho)=U(\rho,a)$ for $1-\eps<\rho<1$. This concludes the proof of the lemma.
\end{proof}

We finish this section by a sufficient condition of regularity at $\rho=1$, for the solution $u(\rho,c)$ defined in Proposition \ref{P:local0} when $p=1+\frac{4}{N-3}$.
\begin{lem}
 \label{L:C40}
 Assume $p=1+\frac{4}{N-3}$. Let $u(\rho,c)$ be the solution defined in Proposition \ref{P:local0}. Assume $u(1,c)=b_0$. Then $u(\rho,c)$ can be extended to a $C^2$ solution of \eqref{PDE} close to $\rho=1$. In other terms,
 $$a=\lim_{\rho\overset{<}{\to} 1}u'(\rho,c)$$
 exists in $\RR$ and  $u(\rho,c)=U(\rho,a)$.
\end{lem}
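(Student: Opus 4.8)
The plan is to reduce everything to the single claim that the derivative has a finite one-sided limit at $\rho=1$, namely that $a:=\lim_{\rho\overset{<}{\to}1}u'(\rho,c)$ exists in $\RR$. Once this is established, Lemma \ref{L:local_uniq} (applied with, say, $\tau=1/2$) gives $u(\rho,c)=U(\rho,a)$ for all $\rho\in(1/2,1)$, and since $U(\cdot,a)$ is by Proposition \ref{P:local2} a $C^2$ solution of \eqref{PDE} in a full neighborhood of $\rho=1$, this provides the desired $C^2$ extension of $u(\cdot,c)$ across $\rho=1$. So the entire content of the lemma is the existence of the limit of $u'(\rho,c)$, and I would spend the proof on that.

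For the existence of the limit, I would work with the self-adjoint form \eqref{C11}, which here reads $\big(\rho^{N-1}u'\big)'=\rho^{N-1}(1-\rho^2)^{-1}g(\rho)$ with $g(\rho):=u(\rho,c)\big(b_0^{p-1}-|u(\rho,c)|^{p-1}\big)$. Integrating from $1/2$ to $\rho$ gives
\[
\rho^{N-1}u'(\rho,c)=\Big(\tfrac12\Big)^{N-1}u'(\tfrac12,c)+\int_{1/2}^{\rho}\sigma^{N-1}(1-\sigma^2)^{-1}g(\sigma)\,d\sigma,
\]
so the limit of $u'(\rho,c)$ exists as soon as the integral $\int^1\sigma^{N-1}(1-\sigma^2)^{-1}g(\sigma)\,d\sigma$ converges. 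The whole point is that the hypothesis $u(1,c)=b_0>0$ makes the ``source'' $g$ vanish at $\rho=1$: since $x\mapsto x(b_0^{p-1}-|x|^{p-1})$ is $C^1$ near $b_0>0$ and vanishes there, one has $|g(\sigma)|\leq C\,|u(\sigma,c)-b_0|$ for $\sigma$ close to $1$, which must be played against the $(1-\sigma^2)^{-1}$ singularity.

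The key quantitative input is the logarithmic derivative bound \eqref{SAbound'}, valid in the case $p=1+\frac{4}{N-3}$, which for fixed $c$ reads $|u'(\rho,c)|\leq C|\log(1-\rho)|$ on $[1/2,1)$. Combined with $u(1,c)=b_0$, this controls the rate at which $u$ approaches $b_0$: writing $u(\rho,c)-b_0=-\int_\rho^1 u'(\sigma,c)\,d\sigma$ and using $\int_0^{x}|\log t|\,dt=x\big(1+|\log x|\big)$ for small $x$, I get $|u(\rho,c)-b_0|\leq C(1-\rho)\big(1+|\log(1-\rho)|\big)$. Therefore
\[
\sigma^{N-1}(1-\sigma^2)^{-1}|g(\sigma)|\leq \frac{C\,|u(\sigma,c)-b_0|}{1-\sigma}\leq C'\big(1+|\log(1-\sigma)|\big),
\]
and the right-hand side is integrable on $(1/2,1)$. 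Hence the integral converges absolutely, $a=\lim_{\rho\overset{<}{\to}1}u'(\rho,c)$ exists, and the reduction above finishes the proof.

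The main obstacle is precisely this borderline integrability: the factor $(1-\rho^2)^{-1}$ alone is not integrable, and one cannot afford more than a logarithmic loss. The mechanism that saves the argument is that the vanishing of $g$ at $\rho=1$ produces an extra factor $(1-\rho)$, while the only information available on that factor — namely the rate of convergence $u\to b_0$ — comes from integrating the a priori bound \eqref{SAbound'}; the resulting $(1-\sigma)(1+|\log(1-\sigma)|)$ exactly cancels the $(1-\sigma)^{-1}$ singularity and leaves the integrable tail $1+|\log(1-\sigma)|$. I would emphasize that this is where the assumption $u(1,c)=b_0$ is used in an essential way, since for a limit value outside $\{0,b_0,-b_0\}$ the source $g$ does not vanish and the integral genuinely diverges, forcing $u'$ to blow up logarithmically.
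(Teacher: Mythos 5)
Your proposal is correct and follows essentially the same route as the paper: the paper's proof likewise integrates the self-adjoint form \eqref{C11}, establishes the logarithmic bound $|u'(\rho,c)|\leq M'|\log(1-\rho)|$ (which you instead quote directly as \eqref{SAbound'}), deduces $|u(\rho,c)-b_0|\leq C(1-\rho)|\log(1-\rho)|$ from $u(1,c)=b_0$, and concludes integrability of $\big(\rho^{N-1}u'\big)'$ so that $u'$ has a limit $a$, finishing via Lemma \ref{L:local_uniq}. The only difference is cosmetic (citing the a priori bound versus rederiving it), so there is nothing to fix.
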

\begin{proof}
Assume to fix ideas that $c>0$. Let $c_0>0$ be a large constant. Since by Proposition \ref{P:local0} $u$ is continuous on $[0,1]\times [0,c_0]$, is also bounded in this set. By the equation
\begin{equation}
\label{C11'}
\left( \rho^{N-1}u' \right)'=\rho^{N-1}(1-\rho^2)^{-1} u\left( b_0^{p-1}-|u|^{p-1} \right),
\end{equation}
we see that there exists a constant $M$, depending only on $c_0$, such that
$$ \left| \left( \rho^{N-1}u'(\rho,c) \right)'\right|\leq \frac{M}{1-\rho},\text{ for }\rho\in (0,1), c\in [0,c_0].$$
Integrating, we deduce that there exists a constant $M'=M'(c_0)$ such that
\begin{equation}
 \label{C50}
 \forall \rho \in \Big[\frac 12,1\Big), \; \forall c\in [0,c_0],\quad |u'(\rho,c)|\leq M' |\log (1-\rho)|.
\end{equation}
Assume that $u(1,c)=b_0$. Then, for $\rho<1$,
$$|u(\rho,c)-b_0|=|u(\rho,c)-u(1,c)|\leq C\int_{\rho}^1 |\log (1-\rho)|d\rho\leq C(1-\rho)|\log (1-\rho)|.$$
By the equation \eqref{C11'}, for $1/2<\rho<1$,
$$\left|(\rho^{N-1}u'(\rho,c))'\right|\leq C|\log(1-\rho)|.$$
This proves that $u'(\rho,c)$ has a limit $a$ as $\rho\to 1$. By Lemma \ref{L:local_uniq}, $u(\rho,c)=U(\rho,a)$ as announced.
\end{proof}

\subsection{Shooting method}

We denote by $\NNN(c)$ the number of zeros of $w(\cdot,c)=\frac{u(\cdot,c)}{u_{\infty}}-1$ in $(0,1)$.
\begin{lem}
\label{L:N(c)}
Let $c_0\in \RR$ such that $u(1,c_0)>0$ and $u(1,c_0)\neq b_0$. Then $\NNN(c)=\NNN(c_0)$ for $c$ close to $c_0$.
\end{lem}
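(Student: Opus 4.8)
The plan is to reduce the count of zeros on all of $(0,1)$ to the count on a slightly smaller interval $(0,\rho_1)$, where the angular function $\Theta$ of Lemma \ref{L:Theta} is available, and then to use the continuity of $\Theta$ together with the two hypotheses to show that the relevant integer part cannot jump. First I would record two consequences of the hypotheses. In the critical case $b_{\infty}=b_0$, so $u_{\infty}(1)=b_{\infty}=b_0$, and the condition $u(1,c_0)\neq b_0$ is exactly $w(1,c_0)=\frac{u(1,c_0)}{b_{\infty}}-1\neq 0$. Moreover $u(1,c_0)>0$ forces $c_0>0$: indeed \eqref{PDE} is odd in $u$, so by uniqueness $u(\rho,-c)=-u(\rho,c)$, while $u(\rho,c)>0$ on $(0,1]$ for $c>0$ by \eqref{2-2-29} and $u(\cdot,0)\equiv 0$; hence $u(1,c)\leq 0$ for $c\leq 0$. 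Thus $\Theta(\rho,c)$, which is defined only for $c\geq 0$, is available on a whole neighborhood of $c_0$.

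Next I would use continuity to trap the zeros away from $\rho=1$. Since $u$ extends continuously to $[0,1]\times\RR$ by Proposition \ref{P:local0} and $u_{\infty}$ is continuous and positive on $(0,1]$, the function $w$ is continuous on $(0,1]\times\RR$; as $w(1,c_0)\neq 0$, there exist $\rho_1\in(0,1)$ and an open interval $V\ni c_0$ with $V\subset(0,\infty)$ such that $w(\rho,c)\neq 0$ for all $(\rho,c)\in[\rho_1,1]\times V$. Consequently, for every $c\in V$ the function $w(\cdot,c)$ has no zero on $[\rho_1,1)$, so $\NNN(c)$ equals the number of zeros of $w(\cdot,c)$ on $(0,\rho_1)$.

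Finally I would apply Lemma \ref{L:zeros} on $[0,\rho_1)$. Using $\Theta(0,c)=\pi$, so that $\ent{\frac{\Theta(0,c)}{\pi}-\frac12}=\ent{\frac12}=0$, and that $w(0,c)=-1\neq 0$, I obtain
$$\NNN(c)=-\ent{\frac{\Theta(\rho_1,c)}{\pi}-\frac12},\qquad c\in V.$$
Now $c\mapsto\Theta(\rho_1,c)$ is continuous on $V$ because $\rho_1<1$, and for $c\in V$ one has $w(\rho_1,c)\neq 0$, i.e. $\Theta(\rho_1,c)\notin\frac{\pi}{2}+\pi\ZZ$, so the quantity $\frac{\Theta(\rho_1,c)}{\pi}-\frac12$ never meets $\ZZ$. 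A continuous real-valued function on the connected set $V$ that avoids $\ZZ$ has constant integer part, hence $\ent{\frac{\Theta(\rho_1,c)}{\pi}-\frac12}$ is constant on $V$, which gives $\NNN(c)=\NNN(c_0)$ for all $c\in V$.

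The main obstacle is the middle step, where both hypotheses are genuinely used: they serve to prevent a zero of $w(\cdot,c)$ from escaping to the boundary $\rho=1$ as $c$ varies. The condition $u(1,c_0)\neq b_0$ keeps $w(1,\cdot)$ bounded away from $0$, and the condition $u(1,c_0)>0$ guarantees $c_0>0$, so that the function $\Theta$ (defined only for $c\geq 0$) applies near $c_0$. Once the zeros are confined to $[0,\rho_1]$ with $\rho_1<1$, the remainder is a routine continuity argument for the integer part.
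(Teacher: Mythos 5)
Your proof is correct and follows essentially the same route as the paper: use $u(1,c_0)\neq b_0$ (i.e.\ $w(1,c_0)\neq 0$) and continuity to confine all zeros of $w(\cdot,c)$ to a compact subinterval $[0,\rho_1]$ with $\rho_1<1$, then invoke Lemma \ref{L:zeros} and the fact that the integer part of $\frac{\Theta}{\pi}-\frac12$ cannot jump while $w\neq 0$. Your two refinements --- anchoring directly at $\rho=0$ via $\Theta(0,c)=\pi$ instead of choosing a second small radius $\rho_2$ as the paper does, and checking via the odd symmetry $u(\rho,-c)=-u(\rho,c)$ that $u(1,c_0)>0$ forces $c_0>0$ so that $\Theta$ is defined near $c_0$ --- are harmless simplifications and a useful extra detail, not a different argument.
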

\begin{proof}
 Since $u(1,c_0)\neq b_0$, we have $w(1,c_0)\neq 0$. By the continuity of $w$, there exists $\rho_{1}\in (1/2,1)$ and $\eps>0$ such that
 $$\rho_{1}\leq \rho\leq 1\text{ and }|c-c_0|\leq \eps\Longrightarrow w(\rho,c)\neq 0.$$
 Thus for $c\in [c_0-\eps,c_0+\eps]$, $\rho \in (\rho_{1},1)$, $\frac{\Theta(\rho,c)}{\pi}-\frac 12$ is not an integer. Since $\rho \mapsto \frac{\Theta(\rho,c)}{\pi}-\frac 12$ is continuous, we deduce that $\ent{\frac{\Theta(\rho,c)}{\pi}-\frac 12}$ is constant for $c\in [c_0-\eps,c_0+\eps]$, $\rho\in (\rho_{1},1)$.

 Next, we can choose $\rho_2>0$ close to $0$ such that $w(\rho,c)\neq 0$ for $\rho\in [0,\rho_2]$, $c\in [c_0-\eps,c_0+\eps]$. By the preceding argument, $\ent{\frac{\Theta(\rho_2,c)}{\pi}-\frac 12}$ is independent of $c$ for $c\in [c_0-\eps,c_0+\eps]$.

 As a conclusion,
 $$\ent{\frac{\Theta(\rho,c)}{\pi}-\frac 12}-\ent{\frac{\Theta(\rho_2,c)}{\pi}-\frac 12},$$
 does not depend on $c\in [c_0-\eps,c_0+\eps]$, $\rho\in (\rho_1,1)$, and the conclusion of the lemma follows from Lemma \ref{L:zeros}.
 \end{proof}
We now proceed with the proof of Theorem \ref{T:critical_case}. We first notice that
\begin{equation}
 \label{zeros_infinity}
 \lim_{c\to\infty}\NNN(c)=+\infty.
\end{equation}
Indeed since
$\Theta(0,c)=\pi$,
we have by Lemma \ref{L:zeros}
$$\NNN(c)\geq -\ent{\frac{\Theta(1/2,c)}{\pi}-\frac{1}{2}}$$
and \eqref{zeros_infinity} follows from Proposition \ref{prop0}.

Let $c_1>0$ such that
$$ c\geq c_1\Longrightarrow \NNN(c)\geq n_{0}.$$
According to Lemma \ref{L:C40}, we are reduced to prove that there exists $c\geq c_1$ such that $u(1,c)=b_0$. We argue by contradiction, assuming that $u(1,c)\neq b_0$ for all $c\geq c_1$. By Corollary \ref{Cor0}, taking a larger $c_1$ if necessary, $u(1,c)$ is close to $b_0$, and thus positive, for $c\geq c_1$. By Lemma \ref{L:N(c)}, $\NNN(c)$ is constant for $c\geq c_1$. This contradicts \eqref{zeros_infinity}, concluding the proof. \qed

\subsection{Limit of the derivative at the boundary of the wave cone}

We conclude this section by giving a property of regular solutions of \eqref{PDE} on $[0,1]$ that will be useful to study the asymptotics of these solutions for large $\rho$.
\begin{prop}
 \label{P:limitU'}
 Assume $N\geq 4$ and $p=1+\frac{4}{N-3}$. For every $n\in \NN$, let $u_n$ be a regular solution of \eqref{PDE} on $[0,\rho_n]$, $\rho_n>1$. Assume
 \begin{equation}
  \label{Cr10} \lim_{n\to\infty} u_n(0)=+\infty,\quad \forall n,\;
  u_n(1)=b_{\infty}.
  \end{equation}
 Then
 $$\lim_{n\to\infty}u_n'(1)=u_{\infty}'(1)=-\alpha b_{\infty}.$$
\end{prop}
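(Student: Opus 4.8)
The plan is to reduce the statement to the single assertion $v_n'(1)\to 0$, where $v_n:=u_n/u_\infty$, and then to control this boundary derivative by exploiting the special form of the equation satisfied by $v_n$ in the critical case. Since $u_n$ is regular at the origin, $u_n'(0)=0$, so by the uniqueness in Proposition \ref{P:local0} we have $u_n=u(\cdot,c_n)$ with $c_n=u_n(0)\to\infty$; in particular $v_n=v(\cdot,c_n)$, so that Corollary \ref{Cor0} applies along the sequence $c_n$. Writing $u_n=u_\infty v_n$ and using $u_\infty(1)=b_\infty$, $u_\infty'(1)=-\alpha b_\infty$ together with $v_n(1)=u_n(1)/u_\infty(1)=1$, one obtains $u_n'(1)=-\alpha b_\infty+b_\infty v_n'(1)$. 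Hence it suffices to prove $v_n'(1)\to 0$.

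First I would record the equation for $v_n$. Differentiating the energy $H_v$ of \eqref{2-2-3} and inserting \eqref{2-2-4}, whose coefficient $2\alpha-N+2$ equals $-1$ precisely when $p=1+\frac{4}{N-3}$, one finds after dividing by $v_n'$ and using continuity that, with $g_n:=\rho^2 v_n'$ and $\phi(v):=v\left(1-|v|^{p-1}\right)$,
\begin{equation*}
 g_n'(\rho)=\frac{\alpha(N-2-\alpha)}{1-\rho^2}\,\phi\big(v_n(\rho)\big),\qquad 0<\rho<1.
\end{equation*}
The decisive feature is that $\phi(1)=0$: the apparent singularity of the right-hand side at $\rho=1$ is compensated by the fact that $w_n:=v_n-1$ vanishes at $\rho=1$.

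The heart of the argument is an absorption estimate near $\rho=1$, and this is the step I expect to be the main obstacle, since $\int^1(1-\rho^2)^{-1}\,d\rho$ diverges and the estimate only closes because of the linear vanishing of $w_n$. Fix $\rho_0<1$ (close to $1$, to be chosen) and set $G_n:=\sup_{[\rho_0,1]}|g_n|$, which is finite because $u_n$ is a regular $C^2$ solution on a neighbourhood of $[\rho_0,1]$. Since $w_n(1)=0$ and $w_n(\rho)=-\int_\rho^1 g_n(s)s^{-2}\,ds$, we get $|w_n(\rho)|\le \rho_0^{-2}G_n(1-\rho)$. By \eqref{5-4'}, $v_n\to 1$ uniformly on $[\rho_0,1]$, so for $n$ large $v_n$ stays in a fixed neighbourhood of $1$ and $|\phi(v_n(\rho))|\le C|w_n(\rho)|\le C\rho_0^{-2}G_n(1-\rho)$ with $C=\sup|\phi'|$ on that neighbourhood. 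Because $(1-s^2)^{-1}(1-s)=(1+s)^{-1}\le 1$, integrating the equation for $g_n$ between $\rho$ and $1$ yields $\sup_{[\rho_0,1]}|g_n(\rho)-g_n(1)|\le C_\ast(1-\rho_0)G_n$, where $C_\ast=\alpha(N-2-\alpha)C\rho_0^{-2}$ stays bounded as $\rho_0\to 1$. Choosing $\rho_0$ so close to $1$ that $C_\ast(1-\rho_0)\le\frac14$, I first take the supremum over $\rho$ to get $G_n\le 2|g_n(1)|$, then evaluate the bound at $\rho=\rho_0$ to get $|g_n(1)|\le |g_n(\rho_0)|+C_\ast(1-\rho_0)G_n\le |g_n(\rho_0)|+\tfrac12|g_n(1)|$, whence $|g_n(1)|\le 2|g_n(\rho_0)|$.

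Finally, $g_n(\rho_0)=\rho_0^2 v_n'(\rho_0)\to 0$ as $n\to\infty$ by \eqref{5-5} (applied with $\sigma_0<\rho_0<\sigma_1<1$), so $v_n'(1)=g_n(1)\to 0$ and therefore $u_n'(1)\to -\alpha b_\infty=u_\infty'(1)$, as claimed. The only points needing care are the a priori finiteness of $G_n$ (guaranteed by the assumed regularity of $u_n$, which is what makes the self-improving absorption legitimate rather than circular) and the verification that $C_\ast$ remains bounded as $\rho_0\to 1$, so that the smallness condition $C_\ast(1-\rho_0)\le\frac14$ is genuinely attainable.
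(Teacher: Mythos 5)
Your proof is correct, and it takes a noticeably leaner route than the paper's. Both arguments share the same skeleton: reduce the claim to $v_n'(1)\to 0$ for $v_n=u_n/u_\infty$, write the critical-case equation in the self-adjoint form $(\rho^2 v_n')'=\frac{\alpha(N-2-\alpha)}{1-\rho^2}\,v_n\left(1-|v_n|^{p-1}\right)$ (this is exactly \eqref{Cr11}, since $\alpha(N-2-\alpha)=\frac{(N-3)(N-1)}{4}$ when $\alpha=\frac{N-3}{2}$; your heuristic derivation via $H_v$ and ``dividing by $v_n'$'' is dispensable --- direct substitution gives it cleanly), and then integrate near $\rho=1$, using that the linear vanishing of $w_n=v_n-1$ at $\rho=1$ cancels the nonintegrable kernel $(1-\rho^2)^{-1}$, with the boundary data fed in from Corollary \ref{Cor0}. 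The difference is in how the key bound $|1-v_n^{p-1}(\rho)|\lesssim(1-\rho)\cdot(\text{derivative size})$ is obtained. The paper extracts a subsequence on which $v_n'(1)$ has a fixed sign, locates a critical point $\sigma_n$ of $v_n$ by invoking the intersection result (Proposition \ref{prop0}) to rule out monotonicity of $v_n$ on $(0,1)$, proves $\limsup_n\sigma_n<1$ by a separate contradiction step, and only then exploits the convexity $(\rho^2v_n')'>0$ to get the signed monotone bound $0<\rho^2v_n'(\rho)<v_n'(1)$ that closes the estimate. Your double absorption on $G_n=\sup_{[\rho_0,1]}|\rho^2v_n'|$ --- first $G_n\le 2|g_n(1)|$, then $|g_n(1)|\le 2|g_n(\rho_0)|$ --- needs no sign information whatsoever, so it eliminates the subsequence extraction, the sign dichotomy, the existence of $\sigma_n$, and the dependence on Proposition \ref{prop0} entirely, and it even yields the quantitative estimate $|v_n'(1)|\le 2|v_n'(\rho_0)|$ rather than bare convergence. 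You correctly flagged the two genuine pressure points: the a priori finiteness of $G_n$ (legitimate because $u_n\in C^2$ on $[0,\rho_n]$ with $\rho_n>1$, so the absorption is self-improving, not circular), and the uniformity of $C_\ast$ as $\rho_0\to 1$ (fine, since the Lipschitz constant of $\phi$ is taken on a fixed neighborhood of $1$ furnished by \eqref{5-4'}, independently of $\rho_0$); the identification $u_n=u(\cdot,c_n)$ via $u_n'(0)=0$ and uniqueness in Proposition \ref{P:local0}, needed to apply \eqref{5-4'} and \eqref{5-5}, is also handled correctly and is the same (implicit) step the paper takes.
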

\begin{proof}
 Let $v_n=\frac{u_n}{u_{\infty}}$, $a_n=u_n'(1)$. We have
 \begin{align}
  \label{Cr11}
  \left(\rho^2v_n'\right)'&=\frac{(N-3)(N-1)}{4(1-\rho^2)}v_n\left(1-|v_n|^{p-1}\right)\\
 \label{Cr12}
 v_n(1)&=1,\quad v_n'(1)=\alpha+\frac{a_n}{b_{\infty}}.
 \end{align}
By uniqueness in Proposition \ref{P:local2}, $v_n'(1)\neq 0$. We must prove
\begin{equation}
\label{Cr14}
\lim_{n\to\infty} v_n'(1)=0,
\end{equation}
which is equivalent to the conclusion of the proposition. Extracting subsequences, we may assume that the sign of $v_n'(1)$ is independent of $n$. We will assume to fix ideas
\begin{equation}
 \label{vn'1positive}
 \forall n,\quad v_n'(1)>0,
\end{equation}
and will comment on the other case in the end of the proof.

We note that \eqref{vn'1positive} implies that for large $n$, there exists $\sigma_n\in (0,1)$ such that
\begin{equation}
 \label{exist_sigma_n}
v_n'(\sigma_n)=0\quad \text{and}\quad \forall \rho\in (\sigma_n,1),\; v_n'(\rho)>0.
\end{equation}
Indeed, if \eqref{exist_sigma_n} does not hold, $v_n$ is monotonically increasing on $[0,1]$, which implies $v_n(\rho)<1$ for all $\rho\in (0,1)$, contradicting the fact that by Proposition \ref{prop0}, $u_n$ intersects $u_{\infty}$ on $(0,1)$ for large $n$.
We divide the proof of \eqref{Cr14} into two steps.

\medskip

\noindent \emph{Step 1.} We first prove
\begin{equation}
 \label{Cr13}
 \limsup_{n\to\infty}\sigma_n<1.
\end{equation}
We argue by contradiction, assuming that there exists a subsequence of $(\sigma_n)_n$, that we still denote by $(\sigma_n)_n$ such that
\begin{equation}
\label{absurd_sigman}
\lim_{n\to\infty}\sigma_n=1.
\end{equation}
In all the proof, we will denote by $C>0$ a large constant, depending only on $N$, that may change from line to line.
By Corollary \ref{Cor0}, we have that for large $n$,
\begin{equation}
 \label{Cr20}
 \forall \rho\in [\sigma_n,1],\quad \frac 12\leq v_n(\rho)\leq 1.
\end{equation}
By the equation \eqref{Cr11}, we have that $(\rho^2 v_n')'>0$ on $(\sigma_n,1)$ which yields
\begin{equation}
 \label{Cr21}
 \forall \rho\in (\sigma_n,1),\quad 0<\rho^2v'_n(\rho)<v_n'(1).
\end{equation}
Next, we see by \eqref{absurd_sigman}, \eqref{Cr20} and \eqref{Cr21} that for large $n$, for all $\rho\in [\sigma_n,1]$, we have,
$$ |1-v_n^{p-1}(\rho)|=(p-1)\left| \int_{\rho}^1 v_n'(\sigma)v_n^{p-2}(\sigma)d\sigma\right|\leq C(1-\rho)v_n'(1).$$
Going back to \eqref{Cr11}, we see that for large $n$;
$$v_n'(1)=|v_n'(1)-\sigma_n^2v_n'(\sigma_n)|\leq C\int_{\sigma_n}^1 v_n'(1)d\sigma\leq C(1-\sigma_n)v_n'(1).$$
Hence $\sigma_n\leq 1-1/C$ for large $n$, which contradicts \eqref{absurd_sigman} and yields \eqref{Cr13}.

\medskip

\noindent\emph{Step 2: conclusion of the proof.} By the preceding step, there exists a small $\eps>0$ such that
$$ \forall n,\quad \sigma_n\leq 1-\eps.$$
Using Corollary \ref{Cor0} and the same argument as before,
\begin{equation}
 \label{Cr30}
 \forall \rho\in [1-\eps,1], \quad \frac 12\leq v_n(\rho)\leq 1\text{ and } 0<\rho^2v_n'(\rho)<v_n'(1).
\end{equation}
This yields $|1-v_n^{p-1}(\rho)|\leq C(1-\rho)v_n'(1)$ for $\rho\in [1-\eps,1]$ and thus, integrating \eqref{Cr11},
\begin{equation}
 \label{Cr31}
 \left|v_n'(1)-(1-\eps)^{2}v_n'(1-\eps)\right|\leq C\eps v_n'(1).
\end{equation}
By Corollary \ref{Cor0},
$$\lim_{n\to\infty} v_n'(1-\eps)=0.$$
Assuming that $\eps>0$ is so small that $C\eps<1$ in the right-hand side of \eqref{Cr31}, we obtain \eqref{Cr14}, concluding the proof when $v_n'(1)>0$ for all $n$.

The proof is almost the same in the case where $v_n'(1)<0$ for all $n$. The starting point is to prove that there exists $\sigma_n\in (0,1)$ such that
$$ v_n'(\sigma_n)=0\text{ and }\forall \rho\in (\sigma_n,1),\; v_n'(\rho)<0,$$
which again follows from the fact that $u_n$ intersects $u_{\infty}$ on $(0,1)$ due to $\lim_{\rho\rightarrow0+}v_{n}(\rho)=0$.
\end{proof}

\section{Extension beyond the past light cone}
\label{S:extension}
In this Section, we will show that the solutions constructed in Theorems \ref{Thm0} and \ref{T:critical_case} can be extended for large $n$ beyond the past light cone to infinity, i.e., Theorem \ref{Thm1}. This is a consequence of the following proposition that describes the behaviour of $U(\rho,b)$ for $\rho>1$:
\begin{prop}\label{P:A0}
 Assume $N\geq 3$ and $1+\frac{4}{N-2}<p<1+\frac{4}{N-3}$. There exists $b_*\in [0,b_{\infty})$ with the following property. Let, for $b>0$, $U(\rho)=U(\rho,b)$ be the $C^2$ solution of \eqref{PDE} with $U(1)=b$ defined by Proposition \ref{P:local1}, and $[1,\rho_+)$ its forward maximal interval of existence. Then:
 \begin{itemize}
  \item If $b>b_0$, then $\rho_+<+\infty$ and for all $\rho\in (1,\rho_+)$, $U(\rho)>b_0$.
  \item If $b_*< b<b_0$, then $\rho_+=+\infty$,
  $$\forall \rho>1, \quad 0<U(\rho)<b_0,\;U'(\rho)<0,$$
  and there exists $L>0$ such that
  $$\lim_{\rho\to\infty} \rho^{\alpha}U(\rho)=L,\quad \lim_{\rho\to\infty}\rho^{\alpha+1}U'(\rho)=-\alpha L.$$
 \end{itemize}
 If $b>b_{\infty}$, then $L>b$ and if $b_*<b<b_{\infty}$ then $L<b$.
Finally, if $N=3$, we can take $b_*=0$.
\end{prop}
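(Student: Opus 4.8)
The overall plan is a two–case analysis of the forward flow of $U(\cdot,b)$ from $\rho=1$, organised around the two algebraic identities $\alpha(\alpha+1)=b_0^{p-1}$ and $b_\infty^{p-1}=\alpha(N-2-\alpha)$, together with the self-adjoint form of \eqref{PDE} for $\rho>1$. Writing $G(\rho):=\rho^{N-1}(\rho^2-1)^{\alpha-\frac{N-3}{2}}U'(\rho)$, the equation \eqref{LWP10} gives, for $\rho>1$,
\[
G'(\rho)=\rho^{N-1}(\rho^2-1)^{\alpha-\frac{N-1}{2}}\,U\bigl(|U|^{p-1}-b_0^{p-1}\bigr),
\]
and since $\alpha>\frac{N-3}{2}$ under our hypothesis the factor $(\rho^2-1)^{\alpha-\frac{N-3}{2}}$ vanishes at $\rho=1$, so $G(1^+)=0$. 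The sign of the right-hand side is then governed entirely by whether $U$ is above or below $b_0$, which is the mechanism behind the whole dichotomy. I also record that by Proposition \ref{P:local1} the denominator in $U'(1,b)$ equals $2\alpha-N+3>0$, so $\operatorname{sign}U'(1,b)=\operatorname{sign}\bigl(b(b^{p-1}-b_0^{p-1})\bigr)$: positive for $b>b_0$, negative for $0<b<b_0$.

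\emph{Case $b>b_0$.} Here $U'(1^+)>0$ and I claim $U>b_0$, $U'>0$ persist on $(1,\rho_+)$. Indeed, as long as $U>b_0$ we have $G'>0$, so $G$ increases from $G(1^+)=0$, whence $G>0$ and $U'>0$; thus $U$ increases and cannot return to $b_0$. To get $\rho_+<\infty$ I would use a Riccati estimate on a compact subinterval $[\rho_1,\rho_2]\subset(1,\infty)$, on which all coefficients of the self-adjoint form are bounded and $(\rho^2-1)$ is bounded below: combining $U'=G\,\rho^{-(N-1)}(\rho^2-1)^{-(\alpha-\frac{N-3}{2})}$ with the expression for $G'$ yields $G\,\tfrac{dG}{dU}\ge c\,U^p$ for $U$ large, hence $G\gtrsim U^{(p+1)/2}$ and therefore $U'\ge c'\,U^{(p+1)/2}$ on the interval. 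Since $(p+1)/2>1$ this forces blow-up in finite $\rho$; ruling out $\rho_+=\infty$ only requires knowing $U$ becomes large at a finite $\rho$, which follows because $U$ is increasing and a bounded global $U$ would, via $G\sim -c\rho^{2\alpha+1}$ type integration, grow logarithmically — a contradiction.

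\emph{Case $b_*<b<b_0$.} Now $U'(1^+)<0$, and the same $G$-argument run in reverse shows that while $0<U<b_0$ one has $G'<0$, hence $G<0$, $U'<0$: $U$ is strictly decreasing and confined to $(0,b_0)$, so it is bounded and $\rho_+=\infty$ \emph{provided} it stays positive. A monotonicity argument for $G$ then gives $U\to 0$ (if $U\to U_\infty>0$, integrating $G$ would again produce $U\sim -c\log\rho$). For the sharp rate I would pass to $v=\rho^{\alpha}U/b_\infty$, whose equation reads $(\rho^2-1)v''+\bigl[2\rho-(N-1-2\alpha)\rho^{-1}\bigr]v'=\alpha(N-2-\alpha)\rho^{-2}v(v^{p-1}-1)$ (using $b_\infty^{p-1}=\alpha(N-2-\alpha)$). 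In self-adjoint form the right-hand side has fixed sign according as $v\gtrless1$, so $v$ is \emph{monotone}: increasing when $v(1)=b/b_\infty>1$ and decreasing when $v(1)<1$. Writing $\phi=\rho^{\alpha}U$ and $s=\log\rho$, the equation becomes, to leading order for large $s$, $\ddot\phi+\dot\phi=e^{-2s}\bigl(\phi^{p}-\alpha(N-2-\alpha)\phi\bigr)$; once $\phi$ is known to be bounded the right-hand side is exponentially small, forcing $\dot\phi\to0$ and $\phi\to L$, i.e. $\rho^{\alpha}U\to L$ and $\rho^{\alpha+1}U'\to-\alpha L$. The sign of $L-b$ is then read off from the monotone direction of $v$: for $b>b_\infty$ the function $v$ increases from $v(1)=b/b_\infty$, so $L=b_\infty\lim v>b$, and for $b<b_\infty$ it decreases, giving $L<b$.

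The remaining ingredient is the construction of the threshold $b_*$ and the claims $b_*\in[0,b_\infty)$ and $L>0$. I would \emph{define} $b_*$ as the infimum of those $b<b_0$ for which $U(\cdot,b)$ is global, positive, and satisfies $L>0$, and show this set is an interval reaching up to $b_0$ using continuity of the flow (Proposition \ref{P:local1}) and the key fact that $U(\cdot,b_\infty)=u_\infty$ by uniqueness, so that $b=b_\infty$ is a nondegenerate member with $L=b_\infty$. That $b_*<b_\infty$ amounts to showing that for $b$ slightly below $b_\infty$ the (decreasing) $v$ still converges to a \emph{positive} limit rather than decaying faster than $\rho^{-\alpha}$; this is the genuine KW-type obstacle and I expect it to be the hardest step, requiring a tail/stable-manifold analysis of the equilibrium $v\equiv1$ (linearising the $v$-equation there and using that the limit set near $v=1$ is governed by $(\rho^2v')'\approx \alpha(N-2-\alpha)(p-1)\rho^{-2}v$, whose bounded solutions tend to constants). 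The nondegeneracy $L>0$ for $b\in(b_*,b_\infty)$ is exactly what this threshold isolates. Finally, in dimension $N=3$ the simple monotonicity formula specific to that case (as used in \cite{BMW} and recalled in the introduction) shows positivity and $L>0$ cannot fail for any $b\in(0,b_0)$, giving $b_*=0$.
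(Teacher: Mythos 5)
Your global architecture --- the dichotomy driven by the sign of $U\bigl(b_0^{p-1}-|U|^{p-1}\bigr)$ in the self-adjoint form, the bootstraps giving $U>b_0$, $U'>0$ (resp.\ $0<U<b_0$, $U'<0$), the monotonicity of $v=\rho^{\alpha}U/b_{\infty}$ to locate $L$ relative to $b$, and the appeal to Kavian--Weissler asymptotics --- coincides with the paper's (Claim \ref{Cl:A50}, Lemma \ref{L:70}, Theorem \ref{T:A50}). But two steps, precisely the two the paper has to work hardest for, do not go through as you sketch them. First, finite-time blow-up for $b>b_0$: your comparison $U'\geq c'U^{(p+1)/2}$ holds only on compact intervals, with a constant that degenerates at infinity. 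Quantitatively, $U'=G\,\rho^{-(N-1)}(\rho^2-1)^{\frac{N-3}{2}-\alpha}$ carries a weight of size $\rho^{-2\alpha-2}$, so on $[R,2R]$ your Riccati estimate yields at best $U'\geq cR^{-2}U^{(p+1)/2}$; the resulting decrement of $U^{-(p-1)/2}$ across $[R,2R]$ is $O(R^{-1})$, summable over dyadic scales, so no contradiction with $\rho_+=+\infty$ arises unless you already know $U(R)\gtrsim R^{\alpha}$ --- and your only growth information (ruling out bounded global solutions via logarithmic growth) is far weaker. The paper closes exactly this gap with the Lyapunov functional $H$ of \eqref{2-3-0}: $H$ is eventually nondecreasing by \eqref{2-3-1}, one shows $H\to+\infty$, hence $\rho U'\lesssim U^{(p+1)/2}$, and after the substitution $z(s)=U(e^s)$ the inequality $z''\geq\frac12 z^p$ holds on an \emph{unbounded} $s$-interval, where the ODE energy argument does force a contradiction.

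Second, and more seriously, the heart of the proposition --- positivity of $U$ and the exclusion of the fast decay \eqref{A52} (i.e.\ $L>0$) for $b$ slightly below $b_{\infty}$, together with the existence of a threshold $b_*<b_{\infty}$ --- is exactly the step you leave open (``the hardest step''), and your fallback, defining $b_*$ as the infimum of the good set and invoking continuity of the flow, cannot work as stated: globality, positivity on $(1,\infty)$ and $L>0$ are asymptotic conditions, not conditions at a fixed compact $\rho$, so continuous dependence on $b$ (Proposition \ref{P:local1}) gives no openness or connectedness of that set. The paper's mechanism (Lemma \ref{L:A90}) converts these asymptotic conditions into a single open condition at one finite point: if $\rho_0>\rho_{N,p}$ and $\rho_0 U'(\rho_0)/U(\rho_0)>-\widetilde{B}(\rho_0)/2$, a Riccati argument --- hinging on the sign of the explicit discriminant polynomial $\mathcal{T}(X)=(N-2)^2X^2+\beta_{N,p}X+1$, where the hypothesis $p<1+\frac{4}{N-3}$ enters through $\mathcal{T}(1)<0$ --- propagates positivity and the bound $\rho U'/U>-\widetilde{B}(\rho)/2$ to all $\rho>\rho_0$. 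Since $u_{\infty}$ satisfies the strict inequality ($-\alpha>-\widetilde{B}(\rho_0)/2$ for suitable $\rho_0$), continuity at the single point $\rho_0$ yields $b_*<b_{\infty}$, and the same Riccati bound, because $-\frac{p+3}{2(p-1)}>-(\alpha+1)$, rules out case \eqref{A52} of Theorem \ref{T:A50}, giving $L>0$. Your stable-manifold suggestion might conceivably be developed into an alternative, but as written it is a plan, not a proof; the increasing-$v$ case $b_{\infty}<b<b_0$, the signs of $L-b$, and the $N=3$ claim (via the function $h=2\rho U'+(\alpha+1)U$ of \cite{BMW}, as in Lemma \ref{L:A60}) are the only parts of your proposal that are essentially complete.
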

\begin{rem}
 Proposition \ref{P:A0} classifies all the regular solutions of \eqref{PDE} for $\rho\geq 1$ when $N=3$, completing the work of Kavian and Weissler \cite{KW} (see Theorem \ref{T:A50} below). When $N\geq 4$, we do not know if the best possible value of $b_*$ is $0$ (which would also yield a complete classification of regular solutions of \eqref{PDE}) or strictly positive.
\end{rem}
\begin{rem}
\label{R:BMW}
 Proposition \ref{P:A0} in dimension $N=3$ is contained in \cite{BMW}. In this article, it is also claimed that  any solution of \eqref{PDE} that is regular on $[0,1]$ satisfies $|u(1)|<b_0$ (see \cite[Proposition 1]{BMW}). This would imply, together with the case $N=3$ in Proposition \ref{P:A0}, that all regular solutions of \eqref{PDE} are global when $N=3$. However there is a sign mistake in the proof of \cite[Proposition 1]{BMW}: in the inequality (34) of this proof, the authors bound $b_0-u(s)$ for $s<\rho$ by $b_0-u(\rho)$ in a region where $s\mapsto u(s)$ is monotonically increasing. We were not able to fill this gap, which explains why we have restricted the global existence result in dimension $N=3$ to the solutions $u_n$ with $n$ odd (see Remark \ref{R:N=3}), giving a slightly weaker statement than in \cite[Section 4]{BMW}. Let us mention however that numerical investigations \cite{BizonPC} suggest that 
that $|u(1)|<b_0$ holds for any regular solutions of \eqref{PDE}, and thus that all the solutions of Theorem \ref{Thm0} are global in dimension $N=3$.\ednote{{\color{blue}Thomas: This remark was slightly reformulated. I cited numerical investigations from the personal communication of Bizo\'n}}
 \end{rem}

\begin{prop}
\label{P:A1}
Assume $N\geq 4$ and $p=1+\frac{4}{N-3}$. There exists $a_*\in [-\infty,-\alpha b_0)$ with the following property. Let for $a\in \RR$ the solution of \eqref{PDE} such that $U(1)=b_0$, $U'(1)=a$ given by Proposition \ref{P:local2}, and $[1,\rho_+)$ its forward maximal interval of existence. Then:
 \begin{itemize}
  \item If $a>0$, then $\rho_+<+\infty$ and for all $\rho\in (1,\rho_+)$, $U(\rho)>b_0$.
  \item If $a_*<a<0$, then $\rho_+=+\infty$,
  $$\forall \rho>1, \quad 0<U(\rho)<b_0,\;U'(\rho)<0$$
  and there exists $L>0$ such that
  $$\lim_{\rho\to\infty} \rho^{\alpha}U(\rho)=L,\quad \lim_{\rho\to\infty} \rho^{\alpha+1}U'(\rho)=-\alpha L.$$
 \end{itemize}
\end{prop}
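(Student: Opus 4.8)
The plan is to rescale. Set $V(\rho)=\rho^{\alpha}U(\rho)$, the analogue of the function $v_n$ of Proposition~\ref{P:limitU'} (here $V=b_0v$, since $b_\infty=b_0$). Using $\alpha=\frac{N-3}{2}$ and $2(\alpha+1)=N-1$, a direct computation rewrites \eqref{PDE}, for $\rho>1$ and $U>0$, as the analogue of \eqref{Cr11},
\begin{equation*}
(\rho^2V')'=\frac{V\bigl(V^{p-1}-b_0^{p-1}\bigr)}{\rho^2-1},
\end{equation*}
with boundary data $V(1)=b_0$, $V'(1)=\alpha b_0+a$. The constant solution $V\equiv b_0$ is exactly $u_\infty$ (see \eqref{singular}), which by the uniqueness in Proposition~\ref{P:local2} is $U(\cdot,-\alpha b_0)$; the whole argument is organized around this explicit separatrix.

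The first step is monotonicity. Writing $D=b_0-V$ and $\phi(V)=V(V^{p-1}-b_0^{p-1})$, one has $(\rho^2D')'=-\phi(V)/(\rho^2-1)$, and since $\phi<0$ on $(0,b_0)$ and $\phi>0$ on $(b_0,\infty)$, the quantity $\rho^2D'$ is strictly monotone as long as $V$ stays on one side of $b_0$. As $\rho^2D'$ equals $-(\alpha b_0+a)$ at $\rho=1$, this forces $V$ never to cross $b_0$: for $a<-\alpha b_0$ it is strictly decreasing with $0<V<b_0$, and for $a>-\alpha b_0$ strictly increasing with $V>b_0$. In either case $L:=\lim_{\rho\to\infty}V(\rho)\in[0,\infty]$ exists. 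The first bullet ($a>0$) is treated separately: then $U(1)=b_0$, $U'(1)>0$ give $(\rho^{N-1}U')'>0$, so $U$ increases above $b_0$, and a standard super-linear estimate (e.g.\ $\rho^2V'\gtrsim V^{(p+1)/2}$) produces a finite-$\rho$ blow-up, with $U>b_0$ throughout.

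For the second bullet, global existence is immediate once $0<U<b_0$ (bounded solutions are global). The sign of $(\rho^{N-1}U')'$ then gives $U'<0$, and $U$ being decreasing and bounded yields $U\to U_\infty\geq0$, with $U_\infty=0$ (otherwise integrating the self-adjoint form forces $U\sim-c\log\rho\to-\infty$). The asymptotics are extracted in the variable $\tau=-1/\rho$, in which the equation becomes $V_{\tau\tau}=\frac{1}{1-\tau^2}V(V^{p-1}-b_0^{p-1})$, a \emph{regular} ODE at $\tau=0$ (i.e.\ $\rho=\infty$). It therefore suffices to bound $V$ as $\tau\to0^-$: then $V,V_\tau$ extend continuously to $\tau=0$, giving $\rho^\alpha U=V\to L$ and $\rho^{\alpha+1}U'=\rho V'-\alpha V=-\tau V_\tau-\alpha V\to-\alpha L$. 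Boundedness is free when $a<-\alpha b_0$ (there $0<V<b_0$); when $a\geq-\alpha b_0$ one excludes $L=\infty$ by observing that a blow-up of $V$ at $\tau=0$ would occur at the self-similar rate $V\sim(-\tau)^{-2/(p-1)}=\rho^\alpha$, i.e.\ $U\to\mathrm{const}>0$, contradicting $U\to0$. Since $V\geq b_0$ for $a\in[-\alpha b_0,0)$, this already yields the conclusion, with $L\geq b_0>0$, on $[-\alpha b_0,0)$.

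It remains to produce $a_*<-\alpha b_0$ with $L>0$ on all of $(a_*,0)$. Here I would argue by openness about the separatrix: in the variable $\tau$ the solution for $a=-\alpha b_0$ is the constant $b_0$ on the compact interval $[\tau_1,0]$ ($\tau_1=-1/\rho_1$), so continuous dependence on $a$ (Proposition~\ref{P:local2}) keeps the solutions with $a$ near $-\alpha b_0$ — in particular some $a<-\alpha b_0$ — uniformly close to $b_0$ on $[\tau_1,0]$; they stay positive, reach $\tau=0$, and have $L(a)$ close to $b_0$, hence $L(a)>0$. Taking $a_*=\inf\{\tilde a:(\tilde a,0)\subset\{a<0:U(\cdot,a)>0,\ L(a)>0\}\}$ then gives $a_*<-\alpha b_0$ and the statement on $(a_*,0)$. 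The main obstacle is precisely this large-$\rho$ analysis — proving $L\in(0,\infty)$ and, above all, the continuity/openness yielding $L>0$ strictly below the separatrix; this is where the method of \cite{KW} must be refined, and where the critical exponent $p=1+\frac{4}{N-3}$ has to be handled separately from the subcritical Proposition~\ref{P:A0}.
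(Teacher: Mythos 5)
Your route is correct in outline and genuinely different from the paper's. The paper proves Propositions \ref{P:A0} and \ref{P:A1} simultaneously: for $-\alpha b_0<a<0$ it runs the bootstrap on $V=U/u_\infty$ (Lemma \ref{L:70}); for $a$ below the separatrix it propagates the Riccati barrier $\rho U'/U>-\widetilde{B}(\rho)/2$ past the explicit radius $\rho_{N,p}$ (Lemma \ref{L:A90}, via the polynomial $\TTT$), gets the barrier started near $u_\infty$ by continuity (Lemma \ref{L:A150}); and in both regimes it invokes the Kavian--Weissler alternative (Theorem \ref{T:A50}), ruling out the fast-decay case \eqref{A52} through the logarithmic-derivative bound. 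Your compactification $\tau=-1/\rho$ — available exactly because $\alpha=\frac{N-3}{2}$ trivializes the weights, as your computation $b_0^{p-1}=\frac{(N-1)(N-3)}{4}$ confirms — turns $\rho=\infty$ into a regular point of the ODE for $V=\rho^\alpha U$. The asymptotics \eqref{asymptotics} then come for free from continuity of $(V,V_\tau)$ at $\tau=0$ (no appeal to \cite{KW}; the alternative \eqref{A52} is simply $V(0)=0$, $V_\tau(0)\neq 0$, which makes the dichotomy transparent), and the existence of $a_*<-\alpha b_0$ reduces to continuous dependence on the compact interval $[\tau_1,0]$ about the constant separatrix $V\equiv b_0$, replacing the entire barrier machinery. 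What you lose is generality: this works only at $p=1+\frac{4}{N-3}$, whereas the paper's method covers the subcritical range of Proposition \ref{P:A0} uniformly. One small point to make explicit: continuity of $a\mapsto(U,U')(\rho_1,a)$ is not literally stated in Proposition \ref{P:local2}, but it follows from the contraction there (with $\eps$ uniform for $a$ in compacts, as in Step 2 of Proposition \ref{P:local1}) together with regular ODE theory on $(1,\rho_1]$; the paper itself uses exactly this in Lemma \ref{L:A150}.

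Two steps need repair. First, the assertion ``for $a<-\alpha b_0$ it is strictly decreasing with $0<V<b_0$'' is false as stated: the bootstrap only gives $V$ strictly decreasing and below $b_0$ \emph{as long as} $V>0$. Positivity below the separatrix is precisely what can fail — since $\rho^2V'\leq \alpha b_0+a$ while $V\in(0,b_0)$, for $a$ very negative $V$ vanishes at finite $\rho$ — and its possible failure is why $a_*$ may be finite; compare the solution \eqref{SolGlSc} for $N=5$, which has $U(1)=b_0$, $a<-\alpha b_0$ and $L=0$. In your final argument this does no structural harm, because positivity is either part of the definition of your good set or supplied by the openness argument near the separatrix, but the sentence (and the later ``boundedness is free'') must be conditioned on positivity. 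Second, the first bullet is under-justified: on the \emph{finite} interval $[\tau_1,0)$ the superlinear estimate $V_\tau\gtrsim V^{(p+1)/2}$ does not by itself force blow-up before $\tau=0$; it only yields the upper bound $V\lesssim(-\tau)^{-2/(p-1)}$, i.e.\ $U$ bounded, which is consistent with $\rho_+=+\infty$. You must first exclude a global bounded solution: if $U$ were global, increasing and bounded with $U>b_0$, then $U\to U_\infty\in(b_0,\infty)$ and integrating the self-adjoint form gives $U'\sim c\rho^{-1}$, hence $U\sim c\log\rho$, a contradiction (your own log argument with the opposite sign); then $U\to\infty$ contradicts the self-similar upper bound. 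This is exactly the role the Lyapunov functional $H$ plays in the paper's blow-up proof. Similarly, when you exclude $L=\infty$ for $a\in(-\alpha b_0,0)$, the contradiction with $U\to0$ requires the matching \emph{lower} bound $V\gtrsim(-\tau)^{-2/(p-1)}$, obtained from $V_{\tau\tau}\leq CV^p$ by the symmetric multiplication-by-$V_\tau$ argument; with that spelled out, and a line verifying via the Claim \ref{Cl:A50} bootstrap that positivity plus $a<0$ yields $U<b_0$ and $U'<0$ on all of $(1,\infty)$, your proof is complete.
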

\begin{rem}
 In the case $N=5$, the solution \eqref{SolGlSc} of Glogi\'c and Sch\"orkhuber shows that $a_*>-\infty$.
\end{rem}
\subsection{Blow-up in finite time}
We prove Propositions \ref{P:A0} and \ref{P:A1} together. The proofs are divided into a few lemmas. In this subsection, we consider the cases $b>b_0$ and $a>0$. The other cases are treated in the next subsection.
\begin{lem}
 Assume $1+\frac{4}{N-2}<p<1+\frac{4}{N-3}$ and $b>b_0$. Then the solution $U=U(\cdot,b)$ of \eqref{PDE} blows up in finite time, i.e. $\rho_+<\infty$. The same conclusion holds, if $N\geq 4$, $p=1+\frac{4}{N-3}$ and $a>0$ for the solution $U=U(\cdot,a)$.
\end{lem}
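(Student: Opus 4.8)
The plan is to show that, for $\rho>1$, the solution $U$ is positive, increasing and convex, and then to reduce matters, via the substitution $s=\log\rho$, to the autonomous differential inequality $u_{ss}\ge \tfrac12 u^{p}$, which forces finite-$s$ (hence finite-$\rho$) blow-up. First I would record the decisive sign of the first-order coefficient: for $\rho>1$ one has $\frac{N-1}{\rho}-2(\alpha+1)\rho<0$, since this function is decreasing in $\rho$ and its value at $\rho=1$ is $(N-1)-2(\alpha+1)=(N-3)-2\alpha\le 0$, the inequality being \emph{exactly} the hypothesis $p\le 1+\frac{4}{N-3}$ (with equality in the critical case). Next I would establish $U>b_0$ and $U'>0$ on all of $(1,\rho_+)$. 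This holds near $\rho=1$: in the case $b>b_0$ because $U(1)=b>b_0$ and, by the formula of Proposition \ref{P:local1}, $U'(1)=\frac{b(b^{p-1}-b_0^{p-1})}{2\alpha-(N-3)}>0$ (the denominator being positive precisely because $p<1+\frac{4}{N-3}$); and in the case $a>0$ because $U(1)=b_0$, $U'(1)=a>0$ give $U(\rho)=b_0+a(\rho-1)+o(\rho-1)>b_0$ just to the right of $1$. To propagate this, let $\rho^{*}$ be the supremum of the $\rho$ up to which $U>b_0$ and $U'>0$; on $(1,\rho^{*})$ both bracketed terms in
\[
U''=\frac{1}{\rho^{2}-1}\Big[-\Big(\tfrac{N-1}{\rho}-2(\alpha+1)\rho\Big)U'+U\big(U^{p-1}-b_0^{p-1}\big)\Big]
\]
are positive (using $b_0^{p-1}=\alpha(\alpha+1)$), so $U''>0$, whence $U'$ is increasing and stays positive and $U$ stays above $b_0$; maximality then forces $\rho^{*}=\rho_+$.

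The second step produces a clean inequality. On $(1,\rho_+)$ the displayed identity together with $\frac{\rho^{2}}{\rho^{2}-1}>1$ gives $\rho^{2}U''\ge U^{p}-b_0^{p-1}U$. Passing to $s=\log\rho$ and writing $u(s)=U(e^{s})$, one has the exact relation $\rho^{2}U''=u_{ss}-u_{s}$; since $u_{s}=\rho U'>0$ this yields $u_{ss}\ge U^{p}-b_0^{p-1}U=u^{p}-b_0^{p-1}u$. Because $u$ is increasing, once $u\ge 2^{1/(p-1)}b_0=:u_0$ we have $u^{p}-b_0^{p-1}u\ge\tfrac12 u^{p}$, and therefore $u_{ss}\ge\tfrac12 u^{p}$ for $s$ large.

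For the third step I would run the standard energy/separation argument on $u_{ss}\ge\frac12 u^{p}$ with $u_{s}>0$. If $\rho_+=+\infty$, then $u_{ss}\ge\tfrac12 u_0^{p}>0$ forces $u_s$, and hence $u$, to grow (at least quadratically in $s$), so $u\to+\infty$. Multiplying $u_{ss}\ge\frac12 u^{p}$ by $u_{s}$ and integrating gives $u_{s}\ge c\,u^{(p+1)/2}$ once $u$ is large; since $(p+1)/2>1$ the integral $\int^{+\infty}u^{-(p+1)/2}\,du$ converges, so the interval of existence in $s$ is finite, i.e. $s_+<\infty$ and $\rho_+=e^{s_+}<\infty$, a contradiction. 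Combined with the maximal-existence alternative of Propositions \ref{P:local1}--\ref{P:local2} (which gives $|U|\to\infty$ at $\rho_+$), this proves $\rho_+<\infty$ in both cases.

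The only genuinely delicate point is controlling the degenerate factor $\rho^{2}-1$ in the denominator, which a priori weakens the nonlinear forcing as $\rho$ grows and could conceivably allow unbounded growth only as $\rho\to\infty$ rather than blow-up at a finite $\rho$. The resolution is the observation that in the variable $s=\log\rho$ the operator $\rho^{2}\partial_{\rho}^{2}$ becomes $\partial_{s}^{2}-\partial_{s}$ and the favorable sign $u_{s}>0$ absorbs the first-order term, turning the $\rho$-dependent inequality into the autonomous $u_{ss}\ge\frac12 u^{p}$; this is what makes finite-$\rho$ blow-up transparent. Everything else is routine ODE comparison, the structural hypothesis $p\le 1+\frac{4}{N-3}$ entering only through the sign of $\frac{N-1}{\rho}-2(\alpha+1)\rho$ used in Step~1.
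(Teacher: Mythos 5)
Your Step 1 conclusion is right, but the identity you base everything on contains a sign error, and that error is exactly what makes the rest of your argument look easy. Solving \eqref{PDE} for $U''$ when $\rho>1$ gives
\begin{equation*}
U''=\frac{1}{\rho^{2}-1}\left[\left(\frac{N-1}{\rho}-2(\alpha+1)\rho\right)U'+U\left(|U|^{p-1}-b_0^{p-1}\right)\right],
\end{equation*}
with a \emph{plus} sign on the first-order term, not the minus sign in your display. Since, as you yourself note, $\frac{N-1}{\rho}-2(\alpha+1)\rho<0$ for $\rho>1$, this term is \emph{negative} when $U'>0$: it is friction opposing growth, and it cannot simply be dropped to obtain a lower bound on $U''$. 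Equivalently, in the variable $s=\log\rho$ the exact equation is \eqref{A32}, which can be rewritten as
\begin{equation*}
\left(1-e^{-2s}\right)u_{ss}=u^{p}-b_0^{p-1}u-\left(2\alpha+1-(N-2)e^{-2s}\right)u_{s},
\end{equation*}
where the damping coefficient satisfies $2\alpha+1-(N-2)e^{-2s}\geq 2\alpha-(N-3)\geq 0$ precisely under the hypothesis $p\leq 1+\frac{4}{N-3}$. So your central inequality $u_{ss}\geq u^{p}-b_0^{p-1}u$ does not follow: dropping the $u_s$ term moves the inequality in the wrong direction, and with it both the convexity claim of Step 1 and the autonomous inequality $u_{ss}\geq\frac12 u^{p}$ of Step 2 are unproved. (The conclusion of Step 1 itself, $U>b_0$ and $U'>0$ on $(1,\rho_+)$, does survive: at a first critical point $\rho_1$ of $U$ with $U(\rho_1)>b_0$ the corrected identity gives $U''(\rho_1)>0$, incompatible with $U'>0$ just before $\rho_1$; this is the paper's bootstrap \eqref{A13}, carried out there via the self-adjoint form \eqref{A12}.)

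The missing ingredient is an a priori estimate showing the friction is negligible compared with the nonlinearity, and this is where the bulk of the paper's proof lies. A priori nothing prevents $u_s$ from being comparable to, or larger than, $u^{p}$, in which case the right-hand side above could be negative. The paper argues by contradiction assuming $\rho_+=\infty$, uses the functional $H$ of \eqref{2-3-0}, which is nondecreasing for $\rho>\sqrt{(N-1)(p-1)/(p+3)}$, and runs a dichotomy: if $H$ had a finite limit, then $\int\rho\,(U')^{2}d\rho<\infty$ forces $\rho U'\to0$, hence $U$ has a finite limit larger than $b_0$, and then the equation gives $U''>0$ for large $\rho$, contradicting $U'\to0$; therefore $H\to+\infty$, which yields both $U\to\infty$ and the gradient bound \eqref{A40}, i.e.\ $z'\leq 2z^{\frac{p+1}{2}}$. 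Only then, because $\frac{p+1}{2}<p$, is the damping term $O\bigl(z^{\frac{p+1}{2}}\bigr)=o(z^{p})$, and the inequality \eqref{A42}, $z''\geq\frac12 z^{p}$, becomes legitimate; from that point on your Step 3 coincides with the paper's endgame. As written, your proposal skips all of this, so it has a genuine gap at its central step; to repair it you must prove $u\to\infty$ together with a subcritical bound of the form $u_{s}\leq Cu^{\frac{p+1}{2}}$ (or an equivalent control of $u_s$), for instance by the Lyapunov-functional argument just described.
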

\begin{proof}
 As before, we write the equation \eqref{PDE} in self-adjoint form
 \begin{equation}
  \label{A12}
  \left( \rho^{N-1} (\rho^2-1)^{{\alpha}-\frac{N-3}{2}}U' \right)'=-\rho^{N-1}(\rho^2-1)^{\alpha-\frac{N-1}{2}} \left(b_0^{p-1}-|U|^{p-1}\right)U.
 \end{equation}
 Using \eqref{A12} and a standard bootstrap argument we see that for any $\sigma\geq 1$,
\begin{equation}
 \label{A13}
 \Big( U(\sigma) \geq b_0\text{ and } U'(\sigma)> 0 \Big)\Longrightarrow \forall \rho\in [\sigma,\rho_+), \quad U(\rho)>b_0\text{ and } U'(\rho)>0.
\end{equation}
In the case $p<1+\frac{4}{N-3}$, recall that (see Proposition \ref{P:local1})
\begin{equation}
 \label{A14}
 U'(1)=\frac{b(b_0^{p-1}-b^{p-1})}{N-3-2\alpha}.
\end{equation}
By the assumption $p<1+\frac{4}{N-3}$, we have $N-3-2\alpha<0$, and thus $U'(1)>0$. As a consequence of \eqref{A13}
\begin{equation}
 \label{A20}
 \forall \rho\in (1,\rho_+),\quad U(\rho)>b_0,\; U'(\rho)>0.
\end{equation}
If $p=1+\frac{4}{N-3}$ we have $U(1)=b_0$ and by the assumption of the lemma, $U'(1)>0$ and we also obtain \eqref{A20} as a consequence of \eqref{A13}.

We will prove that $\rho_+<\infty$ by contradiction. Assume $\rho_+=+\infty$ and consider as in \eqref{2-3-0}.
$$H(\rho)=-(\rho^2-1)\frac{(U')^2}{2}+\frac{|U|^{p+1}}{p+1}-\frac{p+1}{(p-1)^2}U^2.$$
By the explicit value of $H'(\rho)$ given in \eqref{2-3-1}, we see that
\begin{equation}
 \label{A23}
 \rho>\sqrt{(N-1)\frac{p-1}{p+3}}\Longrightarrow H'(\rho)\geq 0.
\end{equation}
As a consequence, $H$ has a limit $H_{\infty}\in (-\infty,+\infty]$ as $\rho\to\infty$.

By \eqref{A20}, $U$ has a limit $U_{\infty}\in (b_0,+\infty]$ as $\rho\to\infty$. If $H_{\infty}<\infty$, then $\rho^2(U'(\rho))^2$ has a limit $\ell\in [0,+\infty]$ as $\rho\to+\infty$.
Integrating the expression of $H'$ in \eqref{2-3-1}, we obtain $\int_1^{+\infty} \rho (U'(\rho))^2<\infty$, and thus that $\ell=0$. Going back to the equation \eqref{PDE}, we see, since $U_{\infty}>b_0$,  that $U''(\rho)>0$ for large $\rho$. Thus $U'$ is monotonically increasing for large $\rho$, a contradiction with the fact that $\rho U'(\rho)$ converges to $0$.  Hence
\begin{equation}
 \label{A30}
 \lim_{\rho\to\infty} H(\rho)=+\infty,
\end{equation}
and, by the definition of $H(\rho)$,
\begin{equation}
 \label{A31}
 \lim_{\rho\to\infty} U(\rho)=+\infty.
\end{equation}
We next use a change of variables from \cite{KW}. Let $\rho=e^{s}$, $z(s)=U(e^s)$. Then
\begin{equation}
 \label{A32}
 z''+(2\alpha+1)z'-\left( [z|^{p-1}-b_0^{p-1} \right)z=e^{-2s}\left( z''+(N-2)z' \right).
\end{equation}
By \eqref{A30}, for large $\rho$,
\begin{equation}
 \label{A40} U(\rho)^{\frac{p+1}{2}}\geq \frac{\rho}{2}U'(\rho).
\end{equation}
Hence for large $s$,
\begin{equation}
 \label{A41}
 z(s)^{\frac{p+1}{2}}\geq \frac{1}{2}z'(s).
\end{equation}
Since $p>\frac{p+1}{2}$, we deduce from the equation \eqref{A32} and the inequality \eqref{A41}
\begin{equation}
 \label{A42}
 z''(s)\geq \frac{1}{2} z^p(s).
\end{equation}
Multiplying \eqref{A42} by $z'(s)$, we obtain
$$ \frac{d}{ds}\left( (z')^2-\frac{1}{p+1}z^{p+1} \right)\geq 0.$$

Hence, using that $\lim_{s\to\infty}z(s)=+\infty$, we deduce that, for large $s$,
$$z'(s)\geq \frac{1}{\sqrt{p+2}}{z^{\frac{p+1}{2}}}.$$
Thus
$$ \frac{d}{ds}\frac{1}{z^{\frac{p-1}{2}}(s)}\leq -\frac{p-1}{2\sqrt{p+2}}$$
for large $s$, a contradiction since $z^{\frac{p-1}{2}}$ is positive.
\end{proof}

\subsection{Global existence}
In this subsection we treat the cases $0<b<b_0$ and $a<0$. We recall the following result from the work of Kavian and Weissler (see \cite[Theorem 3.1]{KW}).
\begin{thm}
 \label{T:A50}
 Assume $1+\frac{4}{N-2}<p$.
 Let $U$ be a nonzero solution of \eqref{PDE} on $(1,+\infty)$ with  $\rho_+=+\infty$ and
 \begin{equation}
  \label{A50}
  \limsup_{\rho \to +\infty} |U(\rho)|<b_0.
 \end{equation}
 Then there exists $L\in \RR \setminus \{0\}$ such that one of the following holds
 \begin{equation}
  \label{A51}
  \lim_{\rho\to+\infty} \rho^{\alpha} U(\rho)=L,\quad \lim_{\rho \to+\infty} \rho^{\alpha+1}U'(\rho)=-\alpha L
  \end{equation}
or
\begin{equation}
 \label{A52}
 \lim_{\rho\to+\infty} \rho^{\alpha+1} U(\rho)=L,\quad \lim_{\rho\to+\infty} \rho^{\alpha+2}U'(\rho)=-(\alpha+1)L.
\end{equation}
\end{thm}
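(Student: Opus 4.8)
The plan is to reduce \eqref{PDE} to an asymptotically autonomous ODE by the logarithmic change of variables already used in the blow-up case, and then to read off the two possible decay rates from the linearization of the limiting autonomous equation at the fixed point $0$. Setting $\rho=e^s$ and $z(s)=U(e^s)$, the function $U$ solves \eqref{PDE} on $(1,\infty)$ if and only if $z$ solves \eqref{A32}, which I rewrite as
\begin{equation*}
(1-e^{-2s})z''+\big[(2\alpha+1)-(N-2)e^{-2s}\big]z'+\alpha(\alpha+1)z-|z|^{p-1}z=0,
\end{equation*}
using the identity $b_0^{p-1}=\alpha(\alpha+1)$. As $s\to+\infty$ the exponential coefficients disappear, and the limiting equation is $z''+(2\alpha+1)z'+\alpha(\alpha+1)z-|z|^{p-1}z=0$, whose linearization at $z=0$ has characteristic polynomial $(\lambda+\alpha)(\lambda+\alpha+1)$. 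The two negative distinct eigenvalues $-\alpha$ and $-(\alpha+1)$ are exactly the exponents producing the alternatives \eqref{A51} and \eqref{A52} after returning to $\rho$.

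First I would prove that $z(s)\to0$ and $z'(s)\to0$. In the new variables the Lyapunov functional $H$ of \eqref{2-3-0} reads $H(e^s)=-E(s)$ with $E(s)=\tfrac12(1-e^{-2s})(z')^2+G(z)$ and $G(z)=\tfrac{\alpha(\alpha+1)}{2}z^2-\tfrac{|z|^{p+1}}{p+1}$; the computation \eqref{2-3-1} shows $E$ is nonincreasing for large $s$. Since \eqref{A50} keeps $|z|$ bounded away from $b_0$, one has $G(z)\ge0$, so $E$ is bounded below, has a finite limit $E_\infty\ge0$, and $\int^{\infty}(z')^2\,ds<\infty$. As the equation bounds $z''$, the function $(z')^2$ is uniformly continuous, and Barbalat's lemma gives $z'\to0$; consequently $G(z(s))\to E_\infty$. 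I would then exclude $E_\infty>0$ by a LaSalle-type argument: $z$ cannot converge to a non-equilibrium level of $G$ (the equation would force $z''\to-G'(z)\ne0$, contradicting $z'\to0$), cannot oscillate across $0$ (where $G=0\ne E_\infty$), and is bounded away from $\pm b_0$ by \eqref{A50}. Hence $E_\infty=0$ and $z(s)\to0$.

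With $z,z'\to0$ in hand I would diagonalize the linear part: introducing $\xi=z'+(\alpha+1)z$ and $\eta=z'+\alpha z$ turns the equation into the pair
\begin{equation*}
\xi'=-\alpha\,\xi+N(s),\qquad \eta'=-(\alpha+1)\,\eta+N(s),
\end{equation*}
with common forcing $N(s)=|z|^{p-1}z+O\big(e^{-2s}(|z|+|z'|)\big)$ and $z=\xi-\eta$. A preliminary energy/Gr\"onwall estimate near $0$ gives a crude bound $|z(s)|\le Ce^{-(\alpha-\delta)s}$ for small $\delta>0$; since $\alpha(p-1)=2$, this makes $e^{\alpha s}N(s)$ and $e^{(\alpha+1)s}N(s)$ integrable. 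Variation of constants then shows $e^{\alpha s}\xi(s)$ and $e^{(\alpha+1)s}\eta(s)$ converge, say to $L_1,L_2$. If $L_1\ne0$ the slow mode dominates and $z(s)\sim L_1e^{-\alpha s}$, giving \eqref{A51} with $L=L_1$; if $L_1=0$ then $z(s)\sim -L_2e^{-(\alpha+1)s}$, giving \eqref{A52}. Finally $L\ne0$: were both limits zero, $z$ would decay faster than $e^{-(\alpha+1)s}$, and feeding this back into the tail representations $\xi(s)=-e^{-\alpha s}\int_s^{\infty}e^{\alpha t}N\,dt$ and $\eta(s)=-e^{-(\alpha+1)s}\int_s^{\infty}e^{(\alpha+1)t}N\,dt$ forces, by iteration, $z\equiv0$, contradicting that $U$ is nonzero.

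The main obstacle is the passage from the qualitative convergence $z\to0$ to the sharp rates: one must first bootstrap a crude exponential decay in order to justify the integrability of the forcing $N$, then cleanly separate the slow and fast modes, and in particular exclude super-exponential decay so as to obtain $L\ne0$. The LaSalle step is also delicate because the system is only \emph{asymptotically} autonomous, so the invariance principle has to be applied to the limiting equation while controlling the exponentially small nonautonomous terms $e^{-2s}(z''+(N-2)z')$.
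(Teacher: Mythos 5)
You should first know how the paper itself handles this statement: it does not prove Theorem \ref{T:A50} at all. It is quoted from Kavian--Weissler \cite[Theorem 3.1]{KW}, and the only commentary in the paper is the heuristic placed right after the statement, namely that in the variable $s=\log\rho$ the profile equation becomes the asymptotically autonomous equation \eqref{A32}, whose limiting linear part has the two decaying solutions $e^{-\alpha s}$ and $e^{-(\alpha+1)s}$ corresponding to \eqref{A51} and \eqref{A52}. Your proposal is exactly a worked-out version of that heuristic, and I believe it is correct. The identity $b_0^{p-1}=\alpha(\alpha+1)$ and the factorization $(\lambda+\alpha)(\lambda+\alpha+1)$ are right; your energy $E(s)=\tfrac12(1-e^{-2s})(z')^2+G(z)$ is indeed $-H(e^s)$ and is nonincreasing for large $s$ by the sign computation \eqref{2-3-1} (this part needs no lower restriction on $p$ since $\rho>1$); the assumption \eqref{A50} keeps $z$ in the region where $G\geq c\,z^2$, so $E$ converges, $\int^\infty (z')^2\,ds<\infty$, and since the equation plus the energy bound make $z''$ bounded, Barbalat gives $z'\to0$. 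Your exclusion of $E_\infty>0$ is sound: $G$ is strictly monotone in $|z|$ on $[0,b_0)$, so $|z(s)|\to\zeta>0$ with a fixed sign, and then $z''\to\mp G'(\zeta)\neq0$ contradicts $z'\to0$ (the nonautonomous term is $o(1)$ because $z''$ is bounded). The diagonalization $\xi=z'+(\alpha+1)z$, $\eta=z'+\alpha z$ does give $\xi'=-\alpha\xi+N$, $\eta'=-(\alpha+1)\eta+N$, the differential inequality for $w=|\xi|+|\eta|$ yields the crude rate $e^{-(\alpha-\delta)s}$, and since $\alpha(p-1)=2$ both $e^{\alpha s}N$ and $e^{(\alpha+1)s}N$ are integrable, so $L_1,L_2$ exist and the mode separation (using $\xi=O(e^{-\kappa s})$ with $\kappa>\alpha+1$ when $L_1=0$) produces the two alternatives with the stated derivative limits.

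The one step you should make explicit is the very last one. Iterating the tail representations only improves the decay exponent by a fixed amount $\mu=\min\{(p-1)(\alpha-\delta),2\}$ per step, hence yields decay faster than every exponential --- which by itself does not give $z\equiv0$ for an ODE. To close, introduce a weighted sup norm $W(S)=\sup_{s\geq S}e^{\gamma s}w(s)$ (finite for each fixed $\gamma$ after finitely many iterations) and observe that the tail bound gives $W(S)\leq C e^{-\mu S}W(S)$, forcing $W(S)=0$ for $S$ large; then backward uniqueness for the (regular, for $\rho>1$) ODE gives $U\equiv0$ on $(1,\infty)$, contradicting the hypothesis that $U$ is nonzero. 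With that contraction spelled out, your argument is a complete and self-contained proof of the cited theorem, essentially a hands-on variation-of-constants version of the Levinson-type asymptotic integration underlying \cite{KW}.
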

To understand the heuristic of Theorem 3.4, let as before $z(s)=U(e^s)$. Theorem 3.4 says exactly that assuming \eqref{A50}, the solution $z$ is close, for large $s$, to one of the solutions of the linear autonomous equation obtained by neglecting the nonlinear term and the right-hand side of equation \eqref{A32}.

We next show:
\begin{claim}
\label{Cl:A50}
Assume that one of the following holds:
\begin{itemize}
  \item $1+\frac{4}{N-2}<p<1+\frac{4}{N-3}$, $0<b<b_0$ or
  \item $N\geq 4$, $p=1+\frac{4}{N-3}$, $a<0$,
\end{itemize}
Let $U(\rho)=U(\rho,b)$ in the first case and $U(\rho)=U(\rho,a)$ in the second case. Assume that there exists $\rho_1\in (1,\rho_+)$ such that $U(\rho)> 0$ for $1\leq \rho\leq \rho_1$. Then
 \begin{equation}
  \label{A53}
  \forall \rho\in (1,\rho_1], \quad U(\rho)<b_0,\; U'(\rho)<0.
 \end{equation}
\end{claim}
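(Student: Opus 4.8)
The plan is to run a continuity (bootstrap) argument on the self-adjoint form of the equation, exploiting that the nonlinear term has a definite sign in the region $0<U<b_0$. In both cases \eqref{PDE} can be written as
$$(\phi U')'=-\psi\,\big(b_0^{p-1}-|U|^{p-1}\big)\,U,$$
where in the first case (using \eqref{A12}) $\phi(\rho)=\rho^{N-1}(\rho^2-1)^{\alpha-\frac{N-3}{2}}$ and $\psi(\rho)=\rho^{N-1}(\rho^2-1)^{\alpha-\frac{N-1}{2}}$, while in the second case (using \eqref{C11'}, after writing $(1-\rho^2)^{-1}=-(\rho^2-1)^{-1}$) $\phi(\rho)=\rho^{N-1}$ and $\psi(\rho)=\rho^{N-1}(\rho^2-1)^{-1}$. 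In both cases $\phi,\psi>0$ for $\rho>1$, and the key observation is that whenever $0<U<b_0$ one has $\big(b_0^{p-1}-|U|^{p-1}\big)U>0$, so that $(\phi U')'<0$; that is, the flux $\phi U'$ is \emph{strictly decreasing} on any subinterval of $(1,\rho_1)$ on which $0<U<b_0$.

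Next I would pin down the behaviour at the left endpoint $\rho=1$. In the second case this is immediate: $\phi(1)U'(1)=a<0$. In the first case $U'(1)<0$ as well, by the formula for $U'(1,b)$ in Proposition \ref{P:local1} (equivalently \eqref{A14}): the numerator $b(b_0^{p-1}-b^{p-1})$ is positive for $0<b<b_0$, while the denominator $N-3-2\alpha$ is negative because $p<1+\frac{4}{N-3}$ forces $\alpha>\frac{N-3}{2}$. Here, however, the weight degenerates, $\phi(1)=0$; since $U$ is $C^2$ up to $\rho=1$ and $\alpha-\frac{N-3}{2}>0$, I would instead use $\lim_{\rho\to 1^+}\phi(\rho)U'(\rho)=0$. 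Combined with the strict monotonicity of $\phi U'$, this gives $\phi U'<0$, hence $U'<0$, just to the right of $\rho=1$; together with $U(1)=b<b_0$ (first case) or $U(1)=b_0$ with $U'(1)<0$ (second case), it follows that $0<U<b_0$ and $U'<0$ on some maximal interval $(1,\rho^*)$ with $\rho^*\in(1,\rho_1]$.

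To close the argument I would show $\rho^*=\rho_1$ by contradiction. On $(1,\rho^*)$ the flux $\phi U'$ is strictly decreasing with left-limit $0$ (first case) or $a<0$ (second case) at $\rho=1^+$, so $\phi(\rho^*)U'(\rho^*)<0$ and hence $U'(\rho^*)<0$ since $\phi(\rho^*)>0$. Moreover $U'<0$ on $(1,\rho^*)$ makes $U$ strictly decreasing, so $U(\rho^*)<U(1)\le b_0$, while $U(\rho^*)>0$ because $\rho^*\le\rho_1$; thus $0<U(\rho^*)<b_0$. If $\rho^*<\rho_1$, continuity of $U$ and $U'$ would propagate the strict inequalities $0<U<b_0$ and $U'<0$ slightly beyond $\rho^*$ while staying inside $(1,\rho_1]$, contradicting maximality. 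Hence $\rho^*=\rho_1$, and the same endpoint estimate at $\rho^*=\rho_1$ gives $U(\rho_1)<b_0$, $U'(\rho_1)<0$, which is exactly \eqref{A53}.

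The delicate point is the left endpoint in the first case, where the self-adjoint weight $\phi$ vanishes at $\rho=1$: one cannot read off the sign of the flux from its value there (which is $0$) and must instead use the vanishing limit of $\phi U'$ together with strict monotonicity to initiate the bootstrap. The remainder is a standard continuity argument, made self-reinforcing by the fact that $U'<0$ keeps $U$ below $b_0$, which in turn keeps the nonlinear term of the correct sign so that $\phi U'$ continues to decrease.
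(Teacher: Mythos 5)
Your proposal is correct and follows essentially the same route as the paper's proof: the sign $U'(1)<0$ (from the formula \eqref{A14} in the subcritical case, from the hypothesis $a<0$ in the critical case), the observation \eqref{A60} that the self-adjoint flux $\rho^{N-1}(\rho^2-1)^{\alpha-\frac{N-3}{2}}U'$ strictly decreases while $0<U<b_0$, and a continuity/bootstrap argument. You merely make explicit the bootstrap that the paper leaves as ``standard'' (including the harmless degeneracy $\phi(1)=0$, which the paper sidesteps by using $U'(1)<0$ directly), so there is nothing to correct.
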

\begin{proof}
We note that
 \begin{equation}
  \label{A54}
  U'(1)<0.
 \end{equation}
 This is exactly the assumption $a<0$ in the case $p=1+\frac{4}{N-3}$. In the case $p<1+\frac{4}{N-3}$, it follows from the formula \eqref{A14} for $U'(1)$ and the assumption $0<b<b_0$.

 By the equation in self-adjoint form \eqref{A12},
 \begin{equation}
  \label{A60}
  0<U(\rho)<b_0\Longrightarrow \left( \rho^{N-1}( \rho^2-1)^{\alpha-\frac{N-3}{2}}U' \right)'<0.
 \end{equation}
 The conclusion \eqref{A53} of the claim follows from $0<U(1)\leq b_0$,  \eqref{A54}, \eqref{A60} and a standard bootstrap argument.
\end{proof}
The case $N=3$ is treated in \cite{BMW} (using also the work of \cite{KW} to obtain the exact asymptotics of $U$). We recall their argument for the sake of completeness.
\begin{lem}
\label{L:A60}
Assume $N=3$, $p>5$ and $0<b<b_0$. Let $U(\rho)=U(\rho,b)$. Then
\begin{gather}
 \label{A61} \forall \rho>1,\quad 0<U(\rho)<b_0\\
 \label{A62} \exists L\in(0,\infty),\quad \lim_{\rho\to\infty}\rho^{\alpha}U(\rho)=L,\quad
\end{gather}
\end{lem}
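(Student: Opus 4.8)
The plan is to treat the two assertions in turn, with the positivity and global existence \eqref{A61} carrying the real content and the asymptotics \eqref{A62} then following from the Kavian--Weissler trichotomy of Theorem \ref{T:A50}. Throughout, recall that $N=3$ and $p>5$ force $0<\alpha<\tfrac12$, which will be used repeatedly. The starting observation is that, by \eqref{A14} (valid for $N=3$, since then $p<1+\frac{4}{N-3}=+\infty$), we have $U'(1)=\frac{b(b_0^{p-1}-b^{p-1})}{N-3-2\alpha}<0$ because $0<b<b_0$ and $N-3-2\alpha=-2\alpha<0$. Combined with Claim \ref{Cl:A50}, as long as $U$ stays positive we automatically get $0<U<b_0$ and $U'<0$, so $U$ is decreasing and bounded and cannot blow up. Hence the only way global existence or \eqref{A61} could fail is if $U$ reaches $0$ at some finite $\rho_0\in(1,\rho_+)$.

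To rule this out I would exploit the special algebraic structure of the radial Laplacian in dimension three. Setting $y(\rho)=\rho\,U(\rho)$ and using the identity $r\Delta f=(rf)''$ valid for $N=3$, a direct computation transforms \eqref{PDE} (on any interval where $U>0$) into the self-adjoint form
\[
\bigl((\rho^2-1)^{\alpha}y'\bigr)'=(\rho^2-1)^{\alpha-1}\Bigl(\alpha(1-\alpha)\,y+\rho^{1-p}\,y^{p}\Bigr).
\]
Since $0<\alpha<1$, the right-hand side is strictly positive wherever $y>0$, so $(\rho^2-1)^{\alpha}y'$ is strictly increasing there. Because $U$ and $U'$ are continuous up to $\rho=1$ (Proposition \ref{P:local1}) while $(\rho^2-1)^{\alpha}\to0$, this quantity tends to $0$ as $\rho\to1^+$; being increasing, it is therefore strictly positive for $\rho>1$, whence $y'>0$ and $y=\rho U$ is strictly increasing on $(1,\rho_0)$. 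Consequently $y(\rho)>\lim_{\rho\to1^+}y(\rho)=b>0$ on $(1,\rho_0)$, contradicting $y(\rho_0)=\rho_0 U(\rho_0)=0$. Thus $U>0$ on all of $(1,\rho_+)$, global existence holds, and \eqref{A61} follows; moreover we retain the extra information that $y=\rho U>b$ for all $\rho>1$.

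For \eqref{A62}, since $U$ is positive, decreasing and bounded above by $b<b_0$, we have $\limsup_{\rho\to\infty}|U(\rho)|\leq b<b_0$, so Theorem \ref{T:A50} applies and one of \eqref{A51}, \eqref{A52} holds with some $L\neq0$. Alternative \eqref{A52} would give $\rho^{\alpha+1}U(\rho)\to L$, hence $y=\rho U\sim L\rho^{-\alpha}\to0$ as $\rho\to\infty$ (using $\alpha>0$), which is incompatible with $y>b>0$. Therefore \eqref{A51} must hold, i.e. $\rho^{\alpha}U(\rho)\to L$; since $U>0$ we get $L\geq0$, and $L\neq0$ forces $L>0$, which is exactly \eqref{A62}. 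The crux of the argument---and the step I expect to be the genuine obstacle---is the monotonicity of $(\rho^2-1)^{\alpha}y'$: it is what prevents $U$ from crossing zero, and it hinges on the identity $r\Delta f=(rf)''$, special to $N=3$. In higher dimensions the substitution $y=\rho U$ produces an extra term proportional to $(N-3)U'$ that destroys the sign of the right-hand side, which is presumably why the global existence for $N\geq4$ in Theorems \ref{Thm0}--\ref{Thm1} requires the more delicate analysis of the following subsections.
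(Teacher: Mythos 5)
Your proposal is correct, and it replaces the paper's key device with a genuinely different one. The framing steps coincide with the paper: $U'(1)<0$ from \eqref{A14}, the bounds $0<U<b_0$, $U'<0$ from Claim \ref{Cl:A50}, global existence from the blow-up criterion of Proposition \ref{P:local1}, and the final appeal to Theorem \ref{T:A50}. The difference is in the heart of the matter, ruling out a first zero of $U$. I checked your computation: with $y=\rho U$ and $N=3$, equation \eqref{PDE} becomes $(\rho^2-1)y''+2\alpha\rho y'=\alpha(1-\alpha)y+\rho^{1-p}y^{p}$ where $U>0$, and multiplying by $(\rho^2-1)^{\alpha-1}$ yields exactly your self-adjoint form; since $p>5$ gives $0<\alpha<\tfrac12$, the right-hand side is positive wherever $y>0$, and the boundary limit $(\rho^2-1)^{\alpha}y'\to0$ as $\rho\to1^+$ is justified by the $C^2$ regularity at $\rho=1$ from Proposition \ref{P:local1}, so $y'>0$ and $y>b$ follow. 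The paper (following \cite{BMW}) argues instead with the function $h(\rho)=2\rho U'(\rho)+(\alpha+1)U(\rho)$: it computes $h(1)=\frac{b^p}{\alpha}>0$ using \eqref{A14} and the identity $b_0^{p-1}=\alpha(\alpha+1)$, shows $h>0$ up to a putative first zero $\bar{\rho}$ of $U$ by a first-crossing argument (at a first zero $\tilde{\rho}$ of $h$ the equation forces $h'(\tilde{\rho})>0$), and then gets the contradiction $U(\bar{\rho})\geq-\frac{2\bar{\rho}}{\alpha+1}U'(\bar{\rho})>0$; the resulting bound $\rho U'/U>-\frac{\alpha+1}{2}>-(\alpha+1)$ excludes alternative \eqref{A52} of Theorem \ref{T:A50}. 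Your route buys a manifestly monotone quantity — the sign of $\bigl((\rho^2-1)^{\alpha}y'\bigr)'$ is immediate, with no explicit evaluation at $\rho=1$ and no first-crossing bootstrap — plus the extra conclusion $\rho U(\rho)>b$ for all $\rho>1$, which kills the first zero and alternative \eqref{A52} (where $\rho U\sim L\rho^{-\alpha}\to0$) in one stroke. The paper's $h$ buys a slightly sharper ratio bound ($\rho U'/U>-\frac{\alpha+1}{2}$ versus your $\rho U'/U>-1$), though both comfortably exclude \eqref{A52}. As you correctly observe, both devices exploit structure specific to $N=3$ (your identity $r\Delta f=(rf)''$, the paper's monotonicity of $h$), consistent with the paper's remark that this step does not extend to higher dimensions.
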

\begin{proof}
First, we will show that $0<U(\rho)<b_{0}$ for any $\rho>1$. To this end, let us define
\begin{equation}\label{4-0}
  \bar{\rho}:=\sup\Big\{\rho\in(1,+\infty)\;\big|\; \forall s\in[1,\rho),\; 0<U(s)\Big\}>1.
\end{equation}
We aim to prove that $\bar{\rho}=+\infty$ by contradiction arguments.

By Claim \ref{Cl:A50},
\begin{equation}
 \label{rhobar}
 \forall \rho\in (1,\overline{\rho}), \quad 0<U(\rho)<b_0\text{ and }U'(\rho)<0.
\end{equation}
Now suppose on the contrary that $\bar{\rho}<+\infty$, then one has $U(\bar{\rho})=0$. Moreover, by \eqref{rhobar} and the fact that $U$ is not identically $0$,
\begin{equation}\label{4-3}
  U^{\prime}(\bar{\rho})<0.
\end{equation}
Let us consider the function $h(\rho):=2\rho U'(\rho)+(\alpha+1)U(\rho)$ for $\rho>1$. Noting that $h(1)=\frac{b^{p}}{\alpha}>0$, we can prove that $h(\rho)>0$ for any $\rho\in[1,\bar{\rho})$. Suppose not. Thus there exists a $\tilde{\rho}\in(1,\bar{\rho})$ such that $h(\tilde{\rho})=0$ and $h(\rho)>0$ for any $\rho\in[1,\tilde{\rho})$, and equation \eqref{PDE} yields
\begin{equation}\label{4-2}
h'(\tilde{\rho})=\frac{1-\alpha^{2}}{2\tilde{\rho}}U(\tilde{\rho})+\frac{2\tilde{\rho}}{\tilde{\rho}^{2}-1}U^{p}(\tilde{\rho})>0,
\end{equation}
which is absurd. Consequently, we have
\begin{equation}\label{4-4}
  U(\bar{\rho})=\frac{h(\bar{\rho})-2\bar{\rho}U'(\bar{\rho})}{\alpha+1}\geq-\frac{2\bar{\rho}}{\alpha+1}U'(\bar{\rho})>0,
\end{equation}
which contradicts with $U(\bar{\rho})=0$. Thus we must have $\bar{\rho}=+\infty$.

We are thus in the setting of Theorem \ref{T:A50}. Furthermore, since $h(\rho)>0$ for all $\rho>1$, we have
$$\forall \rho>1, \quad \frac{U'(\rho)}{U(\rho)}>-\frac{\alpha+1}{2\rho}>-(\alpha+1).$$
Thus we cannot be in case \eqref{A52} of Theorem \ref{T:A50}, and \eqref{A51} must hold.
The conclusion of Lemma \ref{L:A60} follows.
\end{proof}
Note that Lemma \ref{L:A60} yields the conclusion of Proposition \ref{P:A0} for $N=3$, except for the fact that $L>b$ if $b>b_{\infty}$ and $L<b$ if $b<b_{\infty}$. This last fact will follow from the arguments below (see Lemma \ref{L:70} and \ref{L:A90}).

In the general case, the proof of Lemma \ref{L:A60} falls down, and we will use more intricate arguments. The proof is easier assuming that $U(\rho)$ is above $u_{\infty}(\rho)$ for $\rho>1$ close to $1$, which is exactly the meaning of the assumptions of the next lemma:
\begin{lem}
 \label{L:70}
 Assume $N\geq 3$, $1+\frac{4}{N-2}<p<1+\frac{4}{N-3}$ and $b_{\infty}<b<b_0$, or $N\geq 4$, $p=1+\frac{4}{N-3}$ and $-\alpha b_0<a<0$. Then $\rho_{+}=+\infty$,
 $$\forall \rho>1,\quad \frac{b_{\infty}}{\rho^{\alpha}}<U(\rho)<b_0,$$
 and there exists $L>b$ such that
 $$\lim_{\rho\to+\infty} \rho^{\alpha}U(\rho)=L,\quad \lim_{\rho\to+\infty} \rho^{\alpha+1}U'(\rho)=-\alpha L.$$
\end{lem}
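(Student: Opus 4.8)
The plan is to exploit the fact that in both cases the hypotheses force $U$ to lie strictly above the singular solution $u_\infty(\rho)=b_\infty\rho^{-\alpha}$ for $\rho>1$ close to $1$: in the first case $U(1)=b>b_\infty=u_\infty(1)$, while in the second case $U(1)=b_0=b_\infty=u_\infty(1)$ but $U'(1)=a>-\alpha b_0=u_\infty'(1)$. I would first show that this ordering persists on the whole interval of existence, i.e. $U(\rho)>u_\infty(\rho)$ for all $\rho\in(1,\rho_+)$. Granting this, $U$ is positive, so Claim \ref{Cl:A50} immediately yields $0<U<b_0$ and $U'<0$ on $(1,\rho_+)$; since $U$ is then bounded and \eqref{PDE} is a regular ODE for $\rho>1$, no blow-up can occur and $\rho_+=+\infty$, while the lower bound $U>b_\infty\rho^{-\alpha}$ is exactly the one stated.

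The heart of the argument is the comparison $U>u_\infty$, which I would prove with a Wronskian. Since $u_\infty$ also solves \eqref{PDE}, hence the self-adjoint form \eqref{A12}, I set $p(\rho)=\rho^{N-1}(\rho^2-1)^{\alpha-\frac{N-3}{2}}$ and $\mathcal W(\rho)=p(\rho)\big(U'u_\infty-Uu_\infty'\big)$. A direct computation using \eqref{A12} for both $U$ and $u_\infty$ gives, wherever $U>0$,
\[
\mathcal W'(\rho)=p(\rho)(\rho^2-1)^{-1}\,U\,u_\infty\big(U^{p-1}-u_\infty^{p-1}\big),
\]
so that $\mathcal W'>0$ on any interval where $U>u_\infty>0$. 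One checks $\mathcal W(1)\ge 0$ in both cases: in the first case $p(1)=0$ (because $\alpha>\frac{N-3}{2}$) so $\mathcal W(1)=0$, while in the second $p(1)=1$ and $\mathcal W(1)=b_\infty(a+\alpha b_0)>0$. Arguing by contradiction, if $\rho^\ast$ is the first zero of $W:=U/u_\infty-1$ after $1$, then $W>0$ on $(1,\rho^\ast)$ forces $\mathcal W$ to be strictly increasing there, whence $\mathcal W(\rho^\ast)>\mathcal W(1)\ge 0$; but $W(\rho^\ast)=0$ with $W'(\rho^\ast)\le 0$ gives $\mathcal W(\rho^\ast)=p(\rho^\ast)u_\infty(\rho^\ast)^2W'(\rho^\ast)\le 0$, a contradiction. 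Hence $W>0$, i.e. $U>u_\infty$, throughout; moreover $\mathcal W>0$ on all of $(1,\infty)$, so $W'>0$ and $\rho\mapsto\rho^\alpha U(\rho)=b_\infty(1+W(\rho))$ is strictly increasing.

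For the asymptotics I would invoke Theorem \ref{T:A50}: since $U'<0$ makes $U$ strictly decreasing with $U<U(2)<b_0$ for $\rho>2$, we have $\limsup_{\rho\to\infty}|U|<b_0$, so either \eqref{A51} or \eqref{A52} holds. The alternative \eqref{A52} would give $\rho^\alpha U(\rho)=\rho^{-1}\cdot\rho^{\alpha+1}U(\rho)\to 0$, contradicting $\rho^\alpha U(\rho)>b_\infty>0$; hence \eqref{A51} holds, producing $L:=\lim_{\rho\to\infty}\rho^\alpha U(\rho)$ and $\rho^{\alpha+1}U'\to-\alpha L$. Finally, since $\rho^\alpha U(\rho)=b_\infty(1+W(\rho))$ is strictly increasing and equals $U(1)$ at $\rho=1$ (namely $b$ in the first case), we obtain $L=\lim_{\rho\to\infty}\rho^\alpha U(\rho)>U(1)=b$, which is the desired bound $L>b$.

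The step I expect to be most delicate is the comparison $U>u_\infty$, and in particular the bookkeeping of the boundary term $\mathcal W(1)$: the weight $p$ degenerates at $\rho=1$ in the subcritical case $p<1+\frac{4}{N-3}$ (so that $\mathcal W(1)=0$ and one only gets strict inequality for $\rho>1$), whereas in the critical case the inequality $a>-\alpha b_0$ is precisely what makes $\mathcal W(1)>0$. A secondary point requiring care is ruling out the fast-decay alternative \eqref{A52} of Theorem \ref{T:A50}, where the lower bound $U>b_\infty\rho^{-\alpha}$ obtained from the comparison is exactly what is needed.
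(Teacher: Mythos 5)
Your proof is correct and is essentially the paper's own argument in different clothing: your Wronskian $\mathcal{W}=p(\rho)\left(U'u_{\infty}-Uu_{\infty}'\right)$ equals $b_{\infty}^2\,\rho^{N-1-2\alpha}(\rho^2-1)^{\alpha-\frac{N-3}{2}}V'$ with $V=U/u_{\infty}$, which is exactly the weighted derivative whose sign the paper controls via equation \eqref{A70}, and the remaining steps (positivity feeding into Claim \ref{Cl:A50}, Theorem \ref{T:A50} with the lower bound $\rho^{\alpha}U>b_{\infty}$ excluding the fast-decay case \eqref{A52}, and monotonicity of $\rho^{\alpha}U$ giving $L>b$) coincide with the paper's. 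The only cosmetic difference is that you run a first-zero contradiction where the paper runs a direct bootstrap maintaining $V>1$ and $V'>0$ on $(1,\rho_+)$.
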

\begin{proof}
 Consider $V=\frac{U}{u_{\infty}}=\rho^{\alpha}\frac{U}{b_{\infty}}$. Using that $U$ satisfies \eqref{PDE}, we obtain that $V$ satisfies the following equation
 \begin{multline}
  \label{A70}
  \left( \rho^{N-1-2\alpha}\left(\rho^2-1\right)^{\alpha-\frac{N-3}{2}}V' \right)'\\
  =-\alpha(N-2-\alpha)\rho^{N-2\alpha-3} \left( \rho^2 -1\right)^{\alpha-\frac{N-1}{2}} V\left( 1-|V|^{p-1} \right).
 \end{multline}
 First assume $p<1+\frac{4}{N-3}$.
 By our assumption
 \begin{equation}
  \label{A71}
  V(1)>1.
 \end{equation}
 Furthermore,
 \begin{equation}
  \label{A72}
  V'(1)=\frac{1}{b_{\infty}} \left( \alpha b+U'(1) \right)=\frac{b}{b_{\infty}}\left( \alpha+\frac{b_0^{p-1}-b^{p-1}}{N-3-2\alpha} \right)=\frac{b}{b_{\infty}}\left( \frac{b_{\infty}^{p-1}-b^{p-1}}{N-3-2\alpha} \right).
 \end{equation}
 Using that $N-3-2\alpha<0$, and that $b>b_{\infty}$ we deduce
 \begin{equation}
  \label{A73}
  V'(1)>0.
 \end{equation}
 If $N\geq 4$ and $p=1+\frac{4}{N-3}$ we have
 \begin{equation}
 \label{A73'}
 V(1)=1\text{ and } V'(1)=\frac{1}{b_{0}}(\alpha b_0+a)>0.
 \end{equation}
 By \eqref{A70},
 $$V(\rho)>1\Longrightarrow \left( \rho^{N-1-2\alpha}\left(\rho^2-1\right)^{\alpha-\frac{N-3}{2}}V' \right)'>0.$$
 By a straightforward bootstrap argument
 \begin{equation}
  \label{A80}
  \forall \rho\in (1,\rho_+),\quad V'(\rho)>0 \text{ and } V(\rho)>1.
 \end{equation}
 This proves that $U(\rho)>\frac{b_{\infty}}{\rho^{\alpha}}>0$ for all $\rho\in (1,\rho_+)$. Combining with Claim \ref{Cl:A50}, we deduce $\rho_+=+\infty$ and
 \begin{equation}
  \label{A81}
  \forall \rho\in (1,+\infty), \quad \frac{b_{\infty}}{\rho^{\alpha}}<U(\rho)<b_0, \quad U'(\rho)<0.
 \end{equation}
 Thus we are in case \eqref{A51} of Theorem \ref{T:A50}, and the conclusion follows. Note that in the case $p<1+\frac{4}{N-3}$
 $$L=b_{\infty}\lim_{\rho\to\infty}V(\rho)>b,$$
 since $V(1)=b/b_{\infty}$ and $V$ increases, and the same argument shows that $L>b_{\infty}$ in the case $p=1+\frac{4}{N-3}$.
\end{proof}
It remains to treat the case $0<b<b_{\infty}$ and $a<-\alpha b_{\infty}$ which are the most difficult ones since there does not seem to exist a simple general argument in these cases to prove that $U$ is positive. We will use ideas from \cite{KW}. We define $\beta_{N,p}$, $\rho_{N,p}$ and $\widetilde{B}(\rho)$, for $\rho>1$, by
\begin{gather}
\label{deftB}
\widetilde{B}(\rho):=\frac{\frac{p+3}{p-1}-(N-2)\rho^{-2}}{1-\rho^{-2}},\quad \beta_{N,p}:=\frac{(2N-8)p^2+(24-12N)p+10N}{(p-1)^2}\\
\label{defrho}
\frac{1}{\rho_{N,p}^2}:=\frac{-\beta_{N,p}-\sqrt{\beta_{N,p}^2-4(N-2)^2}}{2(N-2)^2}.
\end{gather}
We will prove below that $\beta_{N,p}\leq -2(N-2)$, so that $\rho_{N,p}$ is well defined.

\begin{lem}
 \label{L:A90}
 Assume $N=3$ or $N\geq 4$ and $1+\frac{4}{N-2}<p\leq 1+\frac{4}{N-3}$.
 Let $\rho_0>1$, $U$ be a solution of \eqref{PDE} in a neighborhood of $\rho_0$, and $[\rho_0,\rho_+)$ its maximal forward interval of existence. Assume
 \begin{gather}
 \label{A90}
  0<U(\rho_0)<b_{\infty},\quad U'(\rho_0)<0\\
  \label{A91}
  \rho_0\frac{U'(\rho_0)}{U(\rho_0)}>-\frac{\widetilde{B}(\rho_0)}{2}\\
  \label{A92}
  \rho_0>\rho_{N,p}
\end{gather}
Then $\rho_+=+\infty$ and
$$\forall \rho>\rho_{0},\quad U(\rho)>0\text{ and }\frac{\rho U'(\rho)}{U(\rho)}>-\frac{\widetilde{B}(\rho)}{2}.$$
\end{lem}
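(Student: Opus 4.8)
The plan is to pass to the logarithmic variable $\rho=e^s$, set $z(s)=U(e^s)$ as in \eqref{A32}, and study the quantity
\begin{equation*}
q(s):=\frac{z'(s)}{z(s)}=\frac{\rho U'(\rho)}{U(\rho)}\qquad(\rho=e^s),
\end{equation*}
which is exactly the object appearing in the conclusion. Rewriting \eqref{A32} as $z''+\widetilde B(\rho)z'-\frac{|z|^{p-1}-b_0^{p-1}}{1-\rho^{-2}}z=0$ shows that $\widetilde B$ defined in \eqref{deftB} is precisely the damping coefficient in the $s$ variable, and dividing by $z$ gives the Riccati identity
\begin{equation*}
q'=-\widetilde B\,q-q^2+\frac{|z|^{p-1}-b_0^{p-1}}{1-\rho^{-2}}.
\end{equation*}
The conclusion of the lemma is then equivalent to showing $z>0$ and $q>-\widetilde B/2$ for all $s\ge s_0:=\log\rho_0$, up to $s=+\infty$.

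First I would isolate the algebraic fact motivating the definition of $\rho_{N,p}$. With $x=\rho^{-2}$ and $A=\frac{p+3}{p-1}=2\alpha+1$ one has $\widetilde B(1-x)=A-(N-2)x$ and $\frac{d}{ds}\widetilde B=-2x\,\frac{A-(N-2)}{(1-x)^2}$, and a direct computation (using $b_0^{p-1}=\frac{2(p+1)}{(p-1)^2}$) gives the identity
\begin{equation*}
\frac{\widetilde B^2}{4}-\frac{b_0^{p-1}}{1-x}+\frac12\frac{d\widetilde B}{ds}=\frac{f(x)}{4(1-x)^2},\qquad f(x):=(N-2)^2x^2+\beta_{N,p}\,x+1,
\end{equation*}
where the constant term is $A^2-4b_0^{p-1}=1$ and the linear coefficient is $\beta_{N,p}$ by \eqref{deftB}. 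Granting $\beta_{N,p}\le-2(N-2)$ (proved below), $f$ is an upward parabola with two positive roots, the smaller being $x_-=\rho_{N,p}^{-2}$; since the product of the roots is $(N-2)^{-2}$ we get $x_-\le\frac1{N-2}\le\frac{A}{N-2}$, whence $\widetilde B>0$ and $f(x)>0$ for all $0<x<x_-$, i.e. for all $\rho>\rho_{N,p}$.

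The heart of the argument is a bootstrap keeping the trajectory in the region
\begin{equation*}
\mathcal R=\Big\{(z,q):\ 0<z<b_0,\ -\tfrac12\widetilde B<q<0\Big\}.
\end{equation*}
By \eqref{A90} and \eqref{A91}, $(z(s_0),q(s_0))\in\mathcal R$. Let $s_*$ be the first exit time; on $[s_0,s_*)$ all four strict inequalities hold, and at $s_*$ at least one becomes an equality. I rule out each boundary: (iii) since $q<0$ gives $z$ decreasing, $z(s_*)\le z(s_0)<b_{\infty}\le b_0$, so $z=b_0$ is never reached; (iv) the bound $q>-\widetilde B/2\ge-C$ with $C=\widetilde B(\rho_0)/2<\infty$ (as $\widetilde B$ is nonincreasing in $\rho$) yields $z(s)\ge z(s_0)e^{-C(s-s_0)}>0$, so $z$ cannot vanish at finite $s$; hence $0<z(s_*)<b_0$. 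Then (i) if $q(s_*)=0$ the identity gives $q'(s_*)=\frac{z^{p-1}-b_0^{p-1}}{1-x}<0$, contradicting an upward crossing; and (ii) if $q(s_*)=-\widetilde B/2$, then with $P:=q+\widetilde B/2$ the lower bound $z^{p-1}-b_0^{p-1}>-b_0^{p-1}$ and the displayed identity give $P'(s_*)>\frac{f(x)}{4(1-x)^2}>0$ (using $\rho>\rho_0>\rho_{N,p}$), contradicting a downward crossing. Thus $s_*=s_+:=\log\rho_+$.

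Finally, on $[\rho_0,\rho_+)$ we have $0<U<b_0$ and $U'=\tfrac{qU}{\rho}$ with $|q|\le\widetilde B(\rho_0)/2$, so $U,U'$ are bounded on every bounded $\rho$-interval; since \eqref{PDE} is a regular ODE for $\rho>1$, the blow-up alternative forces $\rho_+=+\infty$. Translating back gives $U(\rho)>0$ and $\frac{\rho U'(\rho)}{U(\rho)}=q>-\frac{\widetilde B(\rho)}{2}$ for all $\rho>\rho_0$, as claimed. I expect the one genuinely delicate step to be the algebraic identity producing $f$ and the verification that $\rho_{N,p}$ is exactly the threshold making $f\ge0$; once that is in hand, the remainder is a standard invariant-region analysis of the Riccati equation for $q$.
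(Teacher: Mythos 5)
Your proposal is, at its core, the paper's own argument: the substitution $\rho=e^s$, $z(s)=U(e^s)$, the Riccati equation for $q=z'/z$, and the computation showing that at a downward crossing of $q=-\widetilde B/2$ the derivative of $P=q+\widetilde B/2$ is bounded below by $\frac{\TTT(x)}{4(1-x)^2}>0$ for $x=\rho^{-2}<\rho_{N,p}^{-2}$, where your $f$ is exactly the paper's $\TTT(x)=(N-2)^2x^2+\beta_{N,p}x+1$. Your algebraic identity is correct (the constant term is $A^2-4b_0^{p-1}=\frac{(p+3)^2-8(p+1)}{(p-1)^2}=1$ and the linear coefficient does come out to $\beta_{N,p}$ of \eqref{deftB}), and your variations are harmless or even slightly stronger: you propagate the four-sided region $\left\{0<z<b_0,\ -\frac{\widetilde B}{2}<q<0\right\}$, ruling out $z=0$ by the Gr\"onwall lower bound $z(s)\geq z(s_0)e^{-C(s-s_0)}$ (the paper instead derives a contradiction from $r\to-\infty$ at a zero of $z$) and ruling out $q=0$ using $z<b_0$; the paper propagates only $\{z\neq 0,\ r>-B/2\}$ and recovers $U'<0$, $U<b_0$ separately via Claim \ref{Cl:A50} when applying the lemma. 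You also make the conclusion $\rho_+=+\infty$ explicit via the blow-up alternative, which the paper leaves implicit. One point worth flagging: your step (iv) uses that $\widetilde B$ is nonincreasing in $\rho$, which holds if and only if $A=\frac{p+3}{p-1}\geq N-2$, i.e.\ exactly under the hypothesis $N=3$ or $p\leq 1+\frac{4}{N-3}$; this is where that assumption enters your bootstrap, so it should be verified rather than taken for granted.

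The one genuine omission is that you write ``granting $\beta_{N,p}\le-2(N-2)$ (proved below)'' but the promised proof never appears, and without it the lemma's very threshold is undefined: the square root in \eqref{defrho} requires $\beta_{N,p}^2\geq 4(N-2)^2$, and one needs $\beta_{N,p}<0$ for the two roots of $\TTT$ to be positive. This is precisely the delicate computation the paper carries out. One shows $\TTT(1)\leq 0$: since
\begin{equation*}
\frac{d}{dp}\,\TTT(1)=\frac{8}{(p-1)^3}\bigl(p(N-1)-N-3\bigr)>0 \quad\text{for } p>1+\tfrac{4}{N-2},
\end{equation*}
the quantity $\TTT(1)=1+(N-2)^2+\beta_{N,p}$ is bounded above by its value at $p=1+\frac{4}{N-3}$, which an explicit computation shows to be $0$. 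Hence $\beta_{N,p}\leq-(N-2)^2-1\leq-2(N-2)$; combined with $\TTT(0)=1>0$ and the fact that the product of the roots equals $(N-2)^{-2}$, the roots are real and positive, and the smaller one is $\rho_{N,p}^{-2}\leq\frac{1}{N-2}$ (which also gives your claim $\widetilde B>0$ on the relevant range). With this verification inserted, your proof is complete and matches the paper's.
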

\begin{proof}
 The proof is a refinement of the proof of \cite[Proposition 3.5]{KW}.

 \noindent\emph{Step 1}. Let as before $z(s)=U(e^s)$, and recall that $z$ is solution of the equation \eqref{A32}. We rewrite this equation as follows
 \begin{equation}
   \label{A100}
   z''+B(s)z'+C(s)z=0,
 \end{equation}
 where
 \begin{equation}
   \label{A101}
   B(s):=\frac{\frac{p+3}{p-1}-(N-2)e^{-2s}}{1-e^{-2s}},\quad C(s):=\frac{b_0^{p-1}-|z(s)|^{p-1}}{1-e^{-2s}}.
 \end{equation}
Note that $B(s)=\widetilde{B}(e^s)$. When $z(s)\neq 0$, we define
$$r(s)=\frac{z'(s)}{z(s)}.$$
The equation \eqref{A100} yields
\begin{equation}
 \label{A102}
 r'(s)=-\left(r^2(s)+B(s)r+C(s)\right).
\end{equation}
By assumption \eqref{A91},
\begin{equation}
 \label{A110}
 r(s_0)>-\frac{B(s_0)}{2},
\end{equation}
where $s_0=\log\rho_0$. We claim that if $s_0$ is larger than $s_{N,p}=\log \rho_{N,p}$, we have
\begin{equation}
 \label{A111}
 \forall s\geq s_0,\quad z(s)\neq 0\text{ and } r(s)>-\frac{B(s)}{2}.
\end{equation}
Indeed, assume that \eqref{A111} does not hold, and let $s_1>s_0$ be the first number such that $r(s_1)=-\frac{B(s_1)}{2}$ or $z(s_1)=0$. Thus
\begin{equation}
 \label{A112}
 \forall s\in [s_0,s_1), \quad r(s)>-\frac{B(s)}{2}\text{ and }z(s)\neq 0.
\end{equation}
If $z(s_1)=0$, then $z'(s_1)\neq 0$ (since $z$ is not identically $0$) and $z'(s)$ must be negative for $s<s_1$ close to $s_1$, thus $z'(s_1)<0$, which yields
$$\lim_{s\overset{<}{\to} s_1}r(s)=-\infty,$$
contradicting \eqref{A112}. Thus $z(s_1)>0$ and by the definition of $s_1$, we must have $r(s_1)=-\frac{B(s_1)}{2}$. In view of \eqref{A112}, this implies $r'(s_1)\leq -\frac{B'(s_1)}{2}$, that is
\begin{equation}
 \label{A120}
 \frac{B(s_1)^2}{4}-C(s_1)+\frac{B'(s_1)}{2}\leq 0.
\end{equation}
By the definitions \eqref{A101} of $B(s)$ and $C(s)$,
\begin{equation*}
 \frac{B(s_1)^2}{4}+\frac{B'(s_1)}{2}-C(s_1)=\frac{\TTT\left( e^{-2s_1} \right)}{4(1-e^{-2s_1})^2}+\frac{|z(s)|^{p-1}}{1-e^{-2s_1}},
\end{equation*}
where
$$\TTT(X)=(N-2)^2X^2+\frac{(2N-8)p^2+(24-12N)p+10N}{(p-1)^2}X+1.$$
We claim that the assumption $1+\frac{4}{N-2}<p<1+\frac{4}{N-3}$ implies that $\mathcal{T}$ has two positive roots. Indeed we have
$$\mathcal{T}(0)=1,\quad \mathcal{\TTT}(1)=1+(N-2)^2+\frac{(2N-8)p^2+(24-12N)p+10N}{(p-1)^2}.$$
Note that, since $p>1+\frac{4}{N-2}$,
$$\frac{d}{dp}\TTT(1)=\frac{8}{(p-1)^3}\left( p(N-1)-N-3 \right)>0.$$
Thus we can bound $\TTT(1)$ from above by its value for $p=1+\frac{4}{N-3}$. An explicit computation yields that $\TTT(1)=0$ in this case, which implies that for $1+\frac{4}{N-2}<p<1+\frac{4}{N-3}$, $\TTT(1)<0$.
Since the signs of $\TTT(0)$ and $\TTT(1)$ are opposite, $\TTT$ has at least one root lying in the interval $(0,1)$. Furthermore the product of the two roots (that may coincide) is equal to $\frac{\TTT(0)}{(N-2)^{2}}=\frac{1}{(N-2)^{2}}$, and thus $\beta_{N,p}\leq-2(N-2)$ so $\rho_{N,p}$ is well defined, where $\beta_{N,p}$ and $\rho_{N,p}$ are defined in \eqref{deftB} and \eqref{defrho} respectively. More explicitly, the root between $0$ and $\frac{1}{N-2}$ is exactly $\rho^{-2}_{N,p}$.

As a conclusion, since by our assumption $0<\rho_0^{-2}=e^{-2s_0}<\frac{1}{\rho^2_{N,p}}$, the inequality $s_1>s_0$, implies $\mathcal{T}(e^{-2s_1})>0$, and thus \eqref{A120} cannot hold, which proves that \eqref{A111} holds for all $s>s_{0}$. Going back to the function $U$, we see that for $\rho>\rho_0$,
$$U(\rho)>0,\quad \rho \frac{U'(\rho)}{U(\rho)}>-\frac{\widetilde{B}(\rho)}{2}.$$
\end{proof}
\begin{lem}
 \label{L:A150}
 Assume $N=3$ or $1+\frac{4}{N-2}<p<1+\frac{4}{N-3}$.
 There exists $b_*\in [0,b_{\infty})$ such that if $b_*<b<b_{\infty}$, then $U=U(\cdot,b)$ is defined and positive on $[1,+\infty)$ and there exists $L\in (0,b)$ such that
 \begin{equation}
 \label{A150defL}
 \lim_{\rho\to+\infty} \rho^{\alpha}U(\rho)=L,\quad \lim_{\rho\to+\infty}\rho^{\alpha+1}U'(\rho)=-L\alpha.
 \end{equation}
 If $N\geq 4$ and $p=1+\frac{4}{N-3}$, there exists $a_*<-\alpha b_0$ such that, if $a_*<a<-\alpha b_0$ then $U=U(\cdot,a)$ is defined and positive on $[1,+\infty)$, and satisfies \eqref{A150defL} for some $L\in (0,b_0)$.
\end{lem}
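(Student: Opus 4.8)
The plan is to anchor a continuity (openness) argument at the \emph{explicit} solution $u_{\infty}$ of \eqref{singular}, which is exactly the boundary case of the parameter range, and then to use the Riccati-type bound of Lemma \ref{L:A90} to push global existence, positivity, and the correct slow decay out to infinity.

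First I would note that the boundary parameter corresponds to $u_{\infty}$: in the subcritical case $U(\cdot,b_{\infty})=u_{\infty}$ (since $u_{\infty}(1)=b_{\infty}$ and $u_{\infty}$ is smooth near $\rho=1$), while in the critical case $U(\cdot,-\alpha b_0)=u_{\infty}$ (since $u_{\infty}(1)=b_0$ and $u_{\infty}'(1)=-\alpha b_{\infty}=-\alpha b_0$); in both cases this boundary solution is global and positive on $[1,\infty)$. Since
$$\rho\,\frac{u_{\infty}'(\rho)}{u_{\infty}(\rho)}=-\alpha\quad\text{and}\quad \lim_{\rho\to\infty}\widetilde{B}(\rho)=\frac{p+3}{p-1}=1+2\alpha>2\alpha,$$
I can fix $\rho_0>\rho_{N,p}$ so large that $\widetilde{B}(\rho_0)>2\alpha$. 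Then $u_{\infty}$ satisfies, \emph{with strict inequalities}, the hypotheses \eqref{A90}--\eqref{A92} of Lemma \ref{L:A90} at $\rho_0$: namely $0<u_{\infty}(\rho_0)<b_{\infty}$, $u_{\infty}'(\rho_0)<0$, and $\rho_0u_{\infty}'(\rho_0)/u_{\infty}(\rho_0)=-\alpha>-\widetilde{B}(\rho_0)/2$.

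Next I would invoke continuous dependence on the parameter (the open set $\Omega$ and the continuity of $(U,U')$ in Proposition \ref{P:local1} in the subcritical case; the analogous continuous dependence on $a$, coming from the fixed-point construction of Proposition \ref{P:local2} together with standard ODE theory on the regular interval $(1,\infty)$, in the critical case). Since $u_{\infty}$ exists on all of $[1,\rho_0]$, the compact set $\{(\rho,b_{\infty}):1\le\rho\le\rho_0\}$ lies in the domain, so for the parameter in a one-sided neighborhood of the boundary value one gets $U(\cdot,b)>0$ on $[1,\rho_0]$ and the three strict inequalities above persist at $\rho_0$. Lemma \ref{L:A90} then yields $\rho_+=+\infty$, $U>0$, and $\rho U'/U>-\widetilde{B}(\rho)/2$ on $(\rho_0,\infty)$; together with positivity on $[1,\rho_0]$ this gives positivity on all of $[1,\infty)$. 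Claim \ref{Cl:A50} (applicable since $0<b<b_0$, resp. $a<0$), used on $[1,\rho_1]$ for every $\rho_1$, then forces $0<U<b_0$ and $U'<0$ on $(1,\infty)$, so in particular $\limsup_{\rho\to\infty}U(\rho)\le b<b_0$ and \eqref{A50} holds. Applying Theorem \ref{T:A50} I rule out the fast-decay case \eqref{A52}: there $\rho U'/U\to-(\alpha+1)$, whereas the bound $\rho U'/U>-\widetilde{B}(\rho)/2$ gives $\liminf_{\rho\to\infty}\rho U'/U\ge-\tfrac12-\alpha>-(\alpha+1)$, a contradiction; hence \eqref{A51} holds, and $L>0$ because $U>0$ and $L\ne0$.

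It remains to prove $L<b$ (resp.\ $L<b_0$). For this I set $V=U/u_{\infty}$ and use the self-adjoint equation \eqref{A70}, whose right-hand side is strictly negative wherever $0<V<1$. Studying $G(\rho):=\rho^{N-1-2\alpha}(\rho^2-1)^{\alpha-\frac{N-3}{2}}V'(\rho)$, a bootstrap shows $G$ is strictly decreasing as long as $0<V<1$; starting from $G(1^+)=0$, $V(1)=b/b_{\infty}<1$ in the subcritical case and from $G(1^+)=V'(1)<0$, $V(1)=1$ in the critical case, one concludes $V'<0$, so $V$ decreases and stays in $(0,V(1))$ for all $\rho>1$. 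Letting $\rho\to\infty$ (using \eqref{A51}, i.e.\ $V(\rho)\to L/b_{\infty}$) gives $L/b_{\infty}<V(1)$, that is $L<b$ (resp.\ $L<b_0$). Finally I define $b_*$ (resp.\ $a_*$) as the infimum of the parameters above which the whole conclusion holds; the one-sided neighborhood produced above is nonempty, so $b_*<b_{\infty}$ (resp.\ $a_*<-\alpha b_0$). The same argument covers $N=3$ (Lemma \ref{L:A90} applies), where it only supplies the refinement $L<b$, global positivity being already contained in Lemma \ref{L:A60}.

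The main obstacle is the exclusion of the fast-decay regime \eqref{A52}, i.e.\ showing that the solution decays exactly like $\rho^{-\alpha}$ rather than $\rho^{-\alpha-1}$. This is precisely what the differential-inequality bound of Lemma \ref{L:A90} is built for, and the decisive numerology is $\widetilde{B}(\infty)/2=\tfrac12+\alpha<\alpha+1$; the rest is a continuity argument anchored at the explicit profile $u_{\infty}$ and a monotonicity bootstrap for $V=U/u_{\infty}$.
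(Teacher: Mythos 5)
Your proposal is correct and follows essentially the same route as the paper's own proof: a continuity argument anchored at the explicit solution $u_{\infty}$ at a point $\rho_0>\rho_{N,p}$ with $\widetilde{B}(\rho_0)>2\alpha$, Lemma \ref{L:A90} to propagate global existence, positivity and the Riccati bound, exclusion of the fast-decay alternative \eqref{A52} of Theorem \ref{T:A50} via $\frac{1}{2}+\alpha<\alpha+1$, and monotonicity of $V=U/u_{\infty}$ through \eqref{A70} to conclude $L<b$. The only cosmetic deviations (initializing the $V$-monotonicity bootstrap from the vanishing of the weighted flux at $\rho=1$ instead of the sign of $V'(1)$ computed in \eqref{A72}, and invoking Claim \ref{Cl:A50} to check \eqref{A50}) do not alter the argument.
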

\begin{proof}
We consider to fix ideas the case where $1+\frac{4}{N-2}<p<1+\frac{4}{N-3}$. The proof in the case where $p=1+\frac{4}{N-3}$ is almost the same and is omitted.
Note that
$$\lim_{\rho\to+\infty}\widetilde{B}(\rho)=\frac{p+3}{p-1}>2\alpha.$$
We fix $\rho_0>\rho_{N,p}$ such that $\widetilde{B}(\rho_0)>2\alpha$. Letting as usual $u_{\infty}(\rho)=b_{\infty}\rho^{-\alpha}=U(\rho,b_{\infty})$, we see that
\begin{equation}
 \label{A150}
 \rho_0\frac{u_{\infty}'(\rho_0)}{u_{\infty}(\rho_0)}=-\alpha>-\frac{\tilde{B}(\rho_0)}{2}.
\end{equation}
Using the continuity of $(\rho,b)\mapsto U(\rho,b)$ (see Proposition \ref{P:local1}), we obtain that there exists $b_*\in [0,b_{\infty})$ such that
\begin{equation}
 \label{A151}
 b_*<b<b_{\infty}\Longrightarrow \rho_0\frac{U'(\rho_0)}{U(\rho_0)}>-\frac{\widetilde{B}(\rho_0)}{2} \text{ and }\forall \rho\in[1,\rho_{0}],\; 0<U(\rho)<b_{\infty},\; U'(\rho)<0.
\end{equation}
We can thus use Lemma \ref{L:A90}, which implies that $\rho_+=+\infty$ and
\begin{equation}
 \label{A152}
 \forall \rho>1, \; U(\rho)>0 \text{ and }\forall \rho>\rho_0,\; \rho\frac{U'(\rho)}{U(\rho)}>-\frac{\tilde{B}(\rho)}{2}.
\end{equation}
Using the equation \eqref{A70} for $V=U/u_{\infty}$, and that $V'(1)<0$ by \eqref{A72} and the assumption $0<b<b_{\infty}$, we obtain
\begin{equation}
 \label{A153}
 \forall \rho>1,\quad V'(\rho)<0,\; 0<V(\rho)<1.
\end{equation}
In particular,
\begin{equation}
 \label{A154}
 \forall \rho,\quad 0<U(\rho)<\frac{b}{\rho^{\alpha}}.
\end{equation}
Thus $U$ satisfies the assumptions of Theorem \ref{T:A50}. By the second inequality of \eqref{A152} and since $\lim_{\rho\to\infty}\widetilde{B}(\rho)=\frac{p+3}{p-1}$, we have
$$\limsup_{\rho\to+\infty}\frac{\rho U'(\rho)}{U(\rho)}>-\frac{p+3}{2(p-1)}>-\frac{p+1}{p-1}=-(\alpha+1).$$
Thus we must be in case \eqref{A51} of Theorem \ref{T:A50}, which shows that there exists $L\in (0,\infty)$ such that \eqref{A150defL} holds. Using that $V'(\rho)<0$, we see that for all $\rho\geq 2$, $V(\rho)<V(2)<V(1)=b/b_{\infty}$, which implies that $L<b$, concluding the proof.
\end{proof}

\begin{proof}[Proof of Theorem \ref{Thm1}]
Theorem \ref{Thm1} is a immediate consequence of Propositions \ref{P:A0} and \ref{P:A1}.

Let $u\in C^{2}([0,\sigma))$ ($\sigma >1$) be a solution of \eqref{PDE} on $(0,\sigma)$, and assume that $u(0)\geq C(N,p)$ for a large constant $C(N,p)$ to be specified.

First assume $N=3$ or $N\geq 4$ and $1+\frac{4}{N-2}<p<1+\frac{4}{N-3}$. By Corollary \ref{Cor0}, $u(1)$ is close to $b_{\infty}$ if $u(0)$ is large. Taking $C(N,p)$ large enough, we obtain
$$ b_{*} <u(1) < b_0,$$
where $b_*$ is as in Proposition \ref{P:A0} and Lemma \ref{L:A150}, and the conclusion of the theorem follows from Proposition \ref{P:A0}. Note that if $n$ is large, then $u_n(0)\geq C(N,p)$, proving that the theorem applies to $u_n$. This proves Theorem \ref{Thm1} in this case.

If $N=3$ and $n$ is odd (possibly small) then $u_n$ has an even number of intersections with $u_{\infty}$, and thus $0<u_{n}(1)<b_{\infty}$. Thus it also satisfies the conclusion of Proposition \ref{P:A0} (see the last sentence of this proposition), and Remark \ref{R:N=3} follows.

Next, we consider the case $N\geq 4$ and $p=1+\frac{4}{N-3}$. By Proposition \ref{P:limitU'}, $u'(1)$ is close to $-\alpha b_{\infty}$. Taking $C(N,p)$ large enough, we obtain
$$a_*<u'(1)<0,$$
where $a_*$ is as in Proposition \ref{P:A1}. Thus Proposition \ref{P:A1} implies the conclusion of Theorem \ref{Thm1} in this case.
\end{proof}


\end{document}